\newtheorem{thm}{Theorem}[section]
\newtheorem{cor}[thm]{Corollary}
\newtheorem{lem}[thm]{Lemma}
\newtheorem{prop}[thm]{Proposition}
\newtheorem{question}[thm]{Question}     
  \newtheorem{f}[thm]{Fact}                              
\theoremstyle{definition}
\newtheorem{defin}[thm]{Definition}
\theoremstyle{remark}
\newtheorem{remark}[thm]{Remark}
\newtheorem{example}[thm]{Example}
\numberwithin{equation}{section}
\newcommand{\delete}[1]{} % Comment out text.
\newcommand{\nt}{\noindent}
\def\al{\alpha}
\def\a{\alpha}
\newcommand{\g}{\gamma}
\newcommand{\sk}{\vskip 0.2cm}
\newcommand{\nl}{\newline}
\newcommand{\ben}{\begin{enumerate}}
\newcommand{\een}{\end{enumerate}}
\newcommand{\bit}{\begin{itemize}}
\newcommand{\eit}{\end{itemize}}
\def\R {{\mathbb R}}
\def\N {{\mathbb N}}
\def\Z {{\mathbb Z}}
\def\Q {{\mathbb Q}}
\def\T {{\mathbb T}}
\def\Iso{{\mathrm{Iso}}\,}
\def\Aut{{\mathrm Aut}\,}
\def\nbd {neighborhood }
\def\QED{\nobreak\quad\ifmmode\roman{Q.E.D.}\else{\rm Q.E.D.}\fi}
\begin{document}

\title[]{Orderable groups and semigroup compactifications} 
%Circularly and linearly orderable groups and semigroup compactifications

\dedicatory{Dedicated to my friend Eli Glasner on the occasion of
	his 75th birthday}

%%Authors
%%    Information for first author
%\author[]{Eli Glasner}
%\address{Department of Mathematics,
%	Tel-Aviv University, Ramat Aviv, Israel}
%\email{glasner@math.tau.ac.il}
%\urladdr{http://www.math.tau.ac.il/$^\sim$glasner}

%    Information for second author
\author[]{Michael Megrelishvili}
\address{Department of Mathematics,
	Bar-Ilan University, 52900 Ramat-Gan, Israel}
\email{megereli@math.biu.ac.il}
\urladdr{http://www.math.biu.ac.il/$^\sim$megereli}

%%\date{\today}

\date{2022, August 17}  

\keywords{circular order, circular topology, enveloping semigroup, lexicographic order, ordered group}

\thanks{{\it 2020 AMS classification:}
37B99, 06F15, 06F30, 22A25, 54H15, 46B99}
%3110  new grant 
\thanks{This research was supported by a grant of the Israel Science Foundation (ISF 1194/19) 
 and also by the Gelbart Research Institute at the Department of Mathematics, Bar-Ilan  University}

\begin{abstract} 
	Our aim is to find some new links between linear (circular) orderability of groups and topological dynamics.
	We suggest natural analogs of the concept of algebraic orderability for \textit{topological} groups involving order-preserving actions on compact spaces and the corresponding enveloping semigroups in the sense of R. Ellis.  
	This approach leads to several natural questions. Some of them might be useful also for discrete (countable) orderable groups.  
\end{abstract}

\maketitle  
\setcounter{tocdepth}{1}
 \tableofcontents

\section{Introduction}

Orderability properties of groups is an active research direction since a long time. 
 This theory is closely related to topology. See, for example, \cite{CR-b,DNR,Calegari04,BS}. 	
%In the present paper 
%we suggest natural analogs of the concept of algebraic linear (circular) orderability for \textit{topological} groups involving order-preserving actions on compact spaces.  

 Ordered dynamical systems were studied in several recent publications  concerning tame systems, Sturmian like circularly ordered symbolic systems and some dynamical generalizations of the amenability concept. 
 %An important example of a tame system is  
 See joint works with Eli Glasner \cite{GM-c,GM-tLN,GM-int,GM-TC} and also \cite{Me-b}. 
Investigation of order-preserving compact dynamical systems provides
 natural framework to study orderable groups (and the orderability itself). 
 We hope that bringing some more dynamical tools, like the Ellis semigroup and Banach representations of actions, to the theory of orderable groups can lead to new interesting lines of research. 
  In Section \ref{s:c}, we give some background about circular orders and necessary results about \textit{circular topology}. 
 Some of these results seem to be new and hopefully have independent interest. 
 % In Section \ref{s:Q} we discuss additional open questions.

The following topological version of (left) linear and circular orderability of abstract groups first was defined in \cite{GM-c}. 

%. We strengthen some known results giving direct independent proofs. 

\begin{defin}  \label{d:orderly} 
	Let us say that a topological group $G$ is \textit{c-orderly} (\textit{orderly}) if $G$ topologically can be embedded into the topological group $H_+(K)$  of all circular (linear) order-preserving homeomorphisms of $K$, endowed with standard compact open topology, for some compact circularly (resp., linearly) ordered space $K$.  
	%Notation: $G \in$ $\mathcal{L}$-$\mathcal{O}rd$ (resp., $G \in $ c-$\mathcal{O}rd$). 
\end{defin}

Theorems A and C below in 
this section show that this definition provides a natural topological generalization.  

%$\mathtt{C-Ord}$ 
%\begin{defin}  \label{d:orderly} 
%	\begin{enumerate}
%		\item \cite{GM-tame}
%		Let us say that a topological group $G$ is \textit{c-orderly} (\textit{orderly}) if $G$ topologically can be embedded into $H_+(K)$ for some compact circularly (resp., linearly) ordered space $K$.  
%		\item If, in addition, $g
%	\end{enumerate}
%	
%\end{defin}

\begin{question} \label{q:1} 
	Which topological groups are orderly ? c-orderly ? 
\end{question}

Immediate tautological examples of orderly (c-orderly) Polish groups are 
$H_+(K)$ for (c)-ordered $K$. In particular, $H_+[0,1]$ (and $H_+(S^1)$, where $S^1$ is the circle). The class of orderly groups is closed under the completion (Proposition \ref{p:complLin}) and finite products (Proposition \ref{p:prod}). In particular, the group $\R^n$ is orderly for every $n \in \N$ (Example \ref{e:R^n}). For a stronger result about $\R^n$ see Remark \ref{r:RnAspOrd}. 

%1208 
According to Remark \ref{r:chech}.1, every linearly ordered set $(X,\leq)$ induces a natural circular order $\circ_{\leq}$ on $X$. Moreover, any linear order preserving transformation $g \in H_+(X,\leq)$ preserves the circular order $\circ_{\leq}$ (i.e, $g \in H_+(X,\circ_{\leq})$). 
So, every orderly group is c-orderly.

An important particular case of a c-orderly (but not orderly) group is 
the Polish group $H_+(S^1)$.  
%of all orientation preserving homeomorphisms of the circle $S^1$. 
%of all orientation preserving homeomorphisms of the circle (with respect to the compact open topology). is c-orderly. 
It algebraically %(but not topologically) 
contains any countable left c-ordered group. 
%Clearly, every topological subgroup of a c-orderly (orderly) group is again c-orderly (orderly).   

By Corollary \ref{c:withPointw} 
for every linearly (circularly) ordered set $(X,\leq)$ ($(X,\circ)$) the topological group of all automorphisms $\Aut(X,\leq)$ ($\Aut(X,\circ)$) with the pointwise topology is orderly (c-orderly). 

If a topological group $G$ is orderly then its discrete copy is orderly 
%1208 
as it immediately follows from Theorem \ref{t:LinCase}. 
The converse is not true even for the precompact cyclic group $(\Z,d_p)$, where $d_p$ is the $p$-adic metric. The reason is that if a topological group $G$ is orderly 
then it cannot contain a \textit{topological torsion element} (Proposition \ref{p:TopTorsion}). Like, the following standard fact: an abstract left orderable group is torsion-free (see. e.g., \cite{CR-b}).

%
%Finite product of topological groups preserves the class of ordered groups (Proposition \ref{p:FinProdOrd}). In particular, the topological groups $\R^n$ are orderly. In contrast, the product of two c-orderly groups need not be c-orderly. For example,  the two-dimensional torus group $\T^2$ us not c-orderly.  

By Proposition \ref{p:noCompactSubgr}, 
another restriction for orderly groups is the absence of nontrivial compact subgroups. Now, Montgomery-Zippin classical theorem implies that 
every locally compact orderly group is a Lie group (Corollary \ref{c:Lie}). 
Note that 
every locally compact subgroup of $H_+(S^1)$ is a Lie group (Ghys \cite[Theorem 4.7]{Ghys}). 
%This leads to a question if every locally compact c-orderly group is a Lie group. 

Our results show that a discrete group $G$ is orderly (c-orderly) if and only if $G$ is left linearly (resp., left circularly) orderable. 
More precisely, in Theorem \ref{t:c-ordCase} below, we prove the following left circular orderability criterion (a version of Theorem A for linear orders is proved in Theorem \ref{t:LinCase}).  
\sk

%\begin{thm} \label{t:c-ordCase} 
\nt \textbf{Theorem A.}  
\textit{Let $G$ be an abstract group. 
	The following are equivalent:
	\ben 
	\item $G$ is left circularly orderable;   
%	\item $G$ admits an 
%	%algebraically is embedded into the group $\Aut(X,\circ)$ of all circular automorphisms  for %some circularly ordered set $(X,\circ)$.  
%	effective circular order-preserving action on a circularly ordered set $(X,\circ)$;
	\item $(G,\tau_{discr})$ is c-orderly. 
%	A discrete copy of $G$ topologically is embedded into the topological group of all circular order preserving homeomorphisms $H_+(K,\circ)$ (compact open topology) for some compact circularly ordered space $K$.
	\een}
%\end{thm} 

%We prove that a group $G$ is left circularly (linearly) orderable 
%%iff it effectively acts on a circularly ordered set $X$ by circular order-preserving transformations 
%iff there exists a compact zero-dimensional circularly (linearly) ordered space $K$ such that the discrete copy of $G$ can be embedded into the topological group $H_+(K)$ of all circular (linear) order-preserving homeomorphisms of $K$.  
%Such $K$ can be constructed as a natural inverse limit of finite circularly (linearly) ordered subsets of $G$ and $K$ is metrizable for every countable $G$. 

In our proof, we use Theorem \ref{t:lim}, about zero-dimensional circular order preserving $G$-compactifications of circularly ordered $G$-sets, using canonically defined inverse limits of finite cycles. In the reverse direction 
we use also a technical sufficient condition of the left orderability, Lemma \ref{l:mainTechn}. Its idea 
 was inspired by the construction of c-ordered lexicographic products. 
%For countable groups one may choose the circle $X:=(\T,\circ)$, \cite{Calegari04,BS}. 
 This argument shows also, as a byproduct, that $G$ is left circularly orderable if and only if $G$ admits an 
 effective circular order-preserving action on a circularly ordered set $(X,\circ)$. 
 This result is known from Zheleva \cite{Zheleva97}.

%Though we could not find references on this result. 
%
%The "linear order" version of this theorem is much easier to prove. See Theorem \ref{t:LinCase}, where the equivalence (1) $\Leftrightarrow$ (2) is well known; see for example, \cite{DNR}.    
%As to our Theorem A, in contrast to the linear orders,  only 

%, where again the equivalence of (1) and (2) is well known. See, for example, \cite{DNR}.  
%For countable groups one may choose the reals $X:=(\R, \leq)$. 

\sk 
Recall that the enveloping (Ellis) semigroup $E(K)$ of a continuous action $G \times K \to K$ on a compact space $K$ is the pointwise closure of all $g$-translations $\{\tilde{g} \colon K \to K, x \mapsto gx: \ g \in G\}$ in the compact space $K^K$. This defines the so-called Ellis compactification $j\colon G \to E(K)$. It is an important tool in topological dynamics. See, for example, the survey works \cite{Gl-env} and \cite{GM-survey}. 

In Sections \ref{s:env} and \ref{s:c-env},  
%\subsection{Ordered enveloping semigroup compactifications} 
we deal with \textit{ordered enveloping semigroups}. There are several good reasons which inspired us to consider such objects. One of them is a work of Hindmann and Kopperman \cite{HK} where the authors embed any orderable discrete group into a linearly ordered compact right topological semigroup. One of the key ideas was to use the Nachbin's ordered compactifications. %, \cite{Nachbin,KR}. 

The second reason comes from our papers \cite{GM-c,GM-tLN,GM-TC} with Eli Glasner, where we study Sturmian like and other circularly ordered dynamical systems. 
In many such cases, the enveloping semigroup is a circularly  ordered semigroup. See Examples \ref{e:exS} below.   

 In the present work, we 
show that enveloping  semigroups of ordered dynamical systems, under natural additional assumptions,  
%(see Theorem \ref{t:env}) 
generate ordered right topological semigroup compactifications of groups. 
See \textit{dynamical orderability} of topological groups (Definition \ref{d:StrongOrderly}).

\begin{question} \label{q:generalRTSC} 
	Which topological groups $G$ admit proper linearly (circularly) order compact right topological semigroup compactification?  We call them: \emph{dynamically (c-)orderable} topological groups. 	
\end{question}

%\begin{remark}
Results of Section \ref{s:env} show that, for discrete groups, these are exactly linearly (circularly) orderable groups in the usual sense. 
%\end{remark}

\sk 

%\begin{thm} \label{t:env}
\nt \textbf{Theorem B.} (see Theorem \ref{t:env}) 
	\textit{Let $K$ be a linearly ordered compact $G$-system 		
		and $\leq$ be a linear order on $G$ such that every orbit map $\tilde{x}\colon G \to K, g \mapsto gx$ is linear order preserving for every $x \in K$.  
	%$g_1 <_G g_2$ iff $g_1x < g_2x$ for every $x \in K$. 
	Then the Ellis semigroup $E(K)$ is a linearly ordered semigroup and the Ellis compactification $j\colon G \to E(K)$ is a linearly ordered semigroup  compactification.}   
%\end{thm}

\sk 
I do not know if the circular analog of Theorem B remains true. 

\sk 
%Linear orderability of $G$ is equivalent to the existence of a compact right topological semigroup compactification which is linearly ordered. More precisely in

%Dynamical orderability is a topological version of orderability.   
In Theorem \ref{t:bi-Case} we show that  an abstract group $G$ is linearly orderable if and only if   
$G$ is dynamically orderable (admits a linear order semigroup compactification $\g\colon G \hookrightarrow S$ which is a topological embedding of the discrete copy of $G$). This result can be derived also from results of Hindman and Kopperman \cite{HK}. 
%We show a little bit more. Namely, 
For  countable discrete $G$ we may suppose that $S$ is hereditarily separable and first countable. For this topological conclusion we use results of 
Ostaszewski \cite{Ost} (see Remark \ref{r:Ost}).

%Note that here $\g$, as well as, left and right translations of $S$ are circular order preserving maps.  
%We prove also the linear version of this theorem characterizing linearly ordered groups. 

\sk 
In the following theorem, we use a classical result of Scwierczkowski \cite{Scw} which uses lexicographic products. % of circularly ordered groups. 

 \sk 
\nt \textbf{Theorem C.} (see Theorem \ref{t:c-Case}) 
Let $G$ be an abstract group. 
The following are equivalent:
\ben 
\item $G$ is circularly orderable;   
%	\item There exist: a circular order $<_G$ on $G$, a circularly ordered set $(X,\circ)$ and a circular order preserving group embedding 
%	$h\colon G \to \Aut(X,\circ)$ such that $[g_1x,g_2x, g_3x]$ for every $[g_1,g_2, g_3]$ and $x \in X$. 
\item $G$ admits a c-order semigroup compactification $\g\colon G \to S$ which is a topological embedding  of the discrete group $G$.  
\een

%The following \textit{topological version} of orderability for \textit{topological groups} probably is of special interest.

\sk 
Every (c-)ordered compact $G$-space $K$ is \textit{tame} in the sense of A. K\"{o}hler \cite{Ko} \textit{(regular}, in the original terminology).  
 If $K$ is metrizable then it is equivalent to say that the enveloping semigroup $E(K)$ is ``small"; namely, a separable Rosenthal compact space (see \cite{GM-survey,GM-c}). 
In view of a hierarchy of tame metric dynamical systems (see \cite{GM-TC}) induced by the  Todor\u{c}evi\'{c}' Trichotomy 
for Rosenthal compact spaces, we ask the following   
%\begin{question} \label{q:SmallLOSC} 
%	Which (c-)orderly topological groups $G$ admit a proper (c-)ordered semigroup compactification $\g \colon G \to S$ such that $S$ is: a) metrizable? b) hereditarily separable? c) first countable? 
%\end{question}
% 

\begin{question} \label{q:SmallEnv} 
	Which (c-)orderly topological groups $G$ admit an effective (c-)ordered action 
	on a compact metrizable space $K$ such that the Ellis compactification $G \hookrightarrow E(K)$ is a topological embedding and the enveloping semigroup $E(K)$ 
 is:
 
  a) metrizable? b) hereditarily separable? c) first countable? 
\end{question}

According to results of \cite{GM-c}, 
every (c-)ordered compact $G$-space $K$ is representable on a \textit{Rosenthal Banach space} (not containing an isomorphic copy of $l^1$). In contrast, the actions of the topological groups $H_+[0,1]$ and $H_+(S^1)$ on $[0,1]$ and $S^1$, respectively, 
are not Asplund representable (Theorem \ref{t:notAspOrd}).   
This raises natural questions (\ref{q:AspOrd} and \ref{q:CountAspOrd}) which orderly groups (in particular, countable discrete)  $G$ admit an order preserving action on a compact metrizable ordered space $K$ such that $j\colon G \hookrightarrow E(K)$ is a topological embedding and 
$K$ is an Asplund representable $G$-space in the sense of \cite{GM-survey}. By results of \cite{GMU08} this is equivalent to the metrizability of $E(K)$ (cf. case (a) in Question \ref{q:SmallEnv}). 
%A sufficient condition is to require that $G$ has a proper 
%\textbf{metrizable} ordered semigroup compactifications.  
For instance, this holds for the group $\R^n$ but it is not true for the topological groups $H_+[0,1]$ and $H_+(S^1)$. 
 It seems to be unclear if every countable discrete ordered group $G$ admits such semigroup compactifications. 

\sk 
\subsection*{List of all questions}   

\ref{q:1}, 
\ref{q:generalRTSC}, 
\ref{q:SmallEnv}, 
\ref{q:LC}, 
\ref{q:c-version}, 
\ref{q:MetrizSemComp}, 
\ref{q:SmallEnv2}, 
\ref{q:AspOrd}, 
\ref{q:CountAspOrd}. 

%1208 
\sk

\noindent \textbf{Acknowledgment.}  
We thank D. Dikranjan and V. Pestov for helpful discussions.  
We thank the anonymous referee for his many helpful suggestions and corrections.

\sk \sk 
\section{Circular order and its topology} 
\label{s:c} 

\subsection{Linear orders} 
%Recall some well known definitions about linear orders.  
By a linear order $\leq$ on $X$ we mean a reflexive, anti-symmetric, transitive  relation which is totally ordered, meaning that for distinct $a,b \in X$ 
we have exactly one of the alternatives: $a<b$ or $b<a$. As usual, $a < b$ means  that $a \leq b \wedge  a \neq b$.     

For every linearly ordered set $(X,\leq)$ and $a,b \in X$, define the rays 
%1605 suggestion2 of referee2 $(a,\to)$, $(\leftarrow,b)$, 
$$(a,\to):=\{x \in X: a < x\}, \ \ \ \ (\leftarrow,b):=\{x \in X: x <b\}.$$ 
The set of all such rays form a subbase for the standard \emph{interval topology} $\tau_{\leq}$ on $X$.

A map $f\colon X_1 \to X_2$ between two linearly ordered sets $(X_1,\leq_1), (X_2,\leq_2)$ is said to be linearly ordered preserving (in short: LOP) if $a \leq_1 b$ implies $f(a) \leq_2 f(b)$.

Let $\a \colon G \times X \to X$ be a left action of an abstract %(discrete) 
%multiplicative 
group $G$ on $X$. 
Consider the associated group homomorphism $h_{\a} \colon G \to S(X)$, where $S(X)$ is the symmetric group of all bijections $X \to X$. 
If $h_{\a}$ is injective then this action is said to be {\it effective}. 
Let $(X,\leq)$ be a linearly ordered set. Denote by $\Aut(X,\leq)$ its automorphism group. So, 
$$\Aut(X):=\{g \in S(X): x\leq y \Leftrightarrow gx \leq gy \ \ \forall x,y \in X\},$$
Clearly, $\Aut(X)$ is a subgroup of $S(X)$. 
The action $\a$ preserves the order $\leq$ if and only if $h(G) \subset \Aut(X)$. 

If $K$ is a linearly ordered topological space, then by $H_+(K)$ we denote the group of all order-preserving homeomorphisms. If $K$ is compact, then $H_+(K)$ with the compact open topology is a topological group. A \textit{linearly ordered dynamical $G$-system} $K$ means that $K$ is a compact $G$-space with respect to a given continuous action of $G$ on a compact space $(K,\tau)$, where the topology $\tau$ is the interval topology of some $G$-invariant linear order on $K$.  Equivalently, this means that we have a continuous homomorphism 
$h\colon G \to H_+(K)$. 

\begin{lem} \label{l:min} 
	Let $(K,\leq)$ be a compact linearly ordered $G$-space. 
	Assume that $K$ is minimal (that is, every $G$-orbit is dense in $K$). Then $K$ is trivial. 	
\end{lem}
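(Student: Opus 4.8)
The plan is to exploit a basic structural feature of compact linearly ordered spaces: in the interval topology such a space must have a least (and a greatest) element, and any extreme element of this kind is automatically a fixed point of every order-preserving bijection. Once we produce a $G$-fixed point, minimality collapses $K$ to that point.

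First I would show that $K$ has a minimum $m$. Suppose it does not. Then for every $x \in K$ there is some $a < x$, so the rays $(a,\to)$, $a \in K$, form an open cover of $K$. By compactness finitely many suffice, say $(a_1,\to),\dots,(a_n,\to)$. Setting $a_0 := \min\{a_1,\dots,a_n\}$ we get $\bigcup_{i=1}^n (a_i,\to) = (a_0,\to)$, and this union omits the point $a_0 \in K$, a contradiction. Hence $m := \min K$ exists (and dually $\max K$ exists, though one extreme point is all we need). Next I observe that $m$ is $G$-fixed. By hypothesis the order is $G$-invariant, so each $g \in G$ acts as an element of $\Aut(K,\leq)$. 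An order automorphism carries the least element to the least element: for any $y \in K$ write $y = gx$; since $m \leq x$ we get $gm \leq gx = y$, so $gm$ is a lower bound of $K$ and therefore $gm = m$. Thus the orbit of $m$ is the singleton $\{m\}$.

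Finally, a linearly ordered topological space is Hausdorff, so $\{m\}$ is closed. By minimality the orbit $Gm = \{m\}$ must be dense in $K$, whence $K = \overline{Gm} = \{m\}$; that is, $K$ is a single point and the system is trivial.

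The only step that requires genuine care, rather than being purely formal, is the existence of the least element $m$: this is precisely where compactness of $K$ in the interval topology is used, and it is the sole place the compactness hypothesis enters. Everything after that — invariance of $\{m\}$ and the density argument — is immediate. So I expect no serious obstacle; the proof is short, and the conceptual content is simply that order-preserving dynamics cannot move the endpoints of a compact chain.
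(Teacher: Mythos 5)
Your proof is correct and follows essentially the same route as the paper: produce an extreme point of the compact chain (you use the minimum, the paper the maximum), observe that it is fixed by every order-preserving bijection, and let minimality collapse $K$ onto it. The only difference is that you spell out the compactness argument for the existence of the extremum, which the paper cites as a standard fact.
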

\begin{proof}
	Since $(K,\leq)$ is a compact linearly ordered space, it has the greatest element $m \in K$. Its $G$-orbit is dense in $K$. On the other hand, $m$ is $G$-fixed because $G$ is a subgroup of $H_+(K,\leq)$. Therefore, $K=\{m\}$ is the singleton. 
\end{proof}

In the following lemma the first assertion is well known 
%1208 
(see, for example, Nachbin \cite{Nachbin}) and easy to prove directly. 

%1208 
\begin{lem}  \label{l:LinOrdClosed}  \ 
	\begin{enumerate}
		\item  Every linear order $\leq$ on a set $X$ is closed in the topological square $X \times X$, where $X$ carries the interval topology.  
		\item $H_+(K)$ is a closed subgroup of $H(K)$ for every compact linearly ordered space $K$.
	\end{enumerate}
\end{lem}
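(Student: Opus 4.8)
The plan is to prove both assertions of Lemma \ref{l:LinOrdClosed}, with the second reduced to the first together with an analysis of the compact-open topology.

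For assertion (1), I would show the complement of the order relation $\leq$ is open in $X\times X$. Suppose $(a,b)\notin {\leq}$, which by totality means $b<a$. I split into two cases. If there exists some $c$ strictly between $b$ and $a$ (that is, $b<c<a$), then the rectangle $(\leftarrow,c)\times(c,\to)$ is an open neighborhood of $(a,b)$ in the product of interval topologies, and every pair $(x,y)$ in it satisfies $y<c<x$, hence $(x,y)\notin{\leq}$. If no such intermediate point exists, then $b$ and $a$ form a ``jump'': the rays $(b,\to)$ and $(\leftarrow,a)$ satisfy $(b,\to)\cap(\leftarrow,a)=\emptyset$, so $(b,\to)=[a,\to)$ and $(\leftarrow,a)=(\leftarrow,b]$ are both open, and then $(b,\to)\times(\leftarrow,a)$ (equivalently $[a,\to)\times(\leftarrow,b]$) is an open neighborhood of $(a,b)$ avoiding $\leq$. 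In both cases $(a,b)$ has an open neighborhood disjoint from $\leq$, so the complement is open and $\leq$ is closed.

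For assertion (2), let $K$ be compact and linearly ordered, and let $g\in H(K)$ lie in the closure of $H_+(K)$ in the compact-open topology; I must show $g$ preserves the order. Since $K$ is compact Hausdorff (the interval topology of a compact linear order is Hausdorff), the compact-open topology on $H(K)$ contains the topology of pointwise convergence, so $g$ is also a pointwise limit of a net $(g_i)$ in $H_+(K)$. Fix $x,y\in K$ with $x\leq y$; then $g_i x\leq g_i y$ for all $i$, i.e. $(g_i x, g_i y)\in{\leq}$. By part (1) the relation $\leq$ is closed in $K\times K$, and $(g_i x,g_i y)\to (gx,gy)$, so the limit $(gx,gy)$ lies in $\leq$, giving $gx\leq gy$. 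Thus $g\in H_+(K)$, which shows $H_+(K)$ is closed.

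I do not expect a serious obstacle here; the two mild points to handle carefully are the jump case in part (1) (where one cannot separate $a$ and $b$ by an intermediate point and must instead use that the rays themselves are clopen enough to serve as neighborhoods) and the justification in part (2) that compact-open convergence forces pointwise convergence, for which compactness and Hausdorffness of $K$ suffice. The core of the argument is simply that a pointwise limit of order-preserving maps remains order-preserving precisely because the order is a closed relation, so part (2) is essentially a formal consequence of part (1).
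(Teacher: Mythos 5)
Your proof is correct and follows essentially the same route as the paper's: the same two-case split (intermediate point versus jump) for the closedness of $\leq$, and part (2) deduced from part (1) via pointwise closedness, which the paper states only in one terse line and you justify more fully. The one slip is notational: in the intermediate-point case the neighborhood of $(a,b)$ should be $(c,\to)\times(\leftarrow,c)$ rather than $(\leftarrow,c)\times(c,\to)$ as written, but your own verification sentence (every pair satisfies $y<c<x$) shows this is what you meant, and the paper's printed proof contains an analogous coordinate slip at the same spot.
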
\begin{proof}
	(1) Let the pair $(a,b)$ does not belong to the relation $\leq$ (which is a subset of $X \times X$). Let $b<a$ (the case of $a<b$ is similar). Then there are two cases. If there exists $c \in X$ such that $b <c<a$ then the open set $(\leftarrow,a) \times (b,\rightarrow)$ contains the point $(a,b)$ and is disjoint from the relation $\leq$. If there is no such $c$ then take $(b,\rightarrow) \times (\leftarrow,a)$. 
	
	(2) By (1) the subgroup $H_+(K)$ is pointwise closed in $H(K)$.  
\end{proof}

\begin{lem} \label{l:DenseLin} 
	Let $X$ be a topological space and $R \subset X \times X$ be a closed partial order on $X$. Suppose that $Y$ is a dense subset of $X$ such that the restricted partial order $R_Y$ on $Y$ is a linear order. Then $R$ is a linear order on $X$.  
\end{lem}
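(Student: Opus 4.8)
The plan is to observe first that three of the four defining properties of a linear order are already in hand: since $R$ is assumed to be a \emph{partial} order, it is reflexive, antisymmetric and transitive, and these are properties of $R$ on all of $X$. So the only thing left to establish is \emph{totality} of $R$, namely that for every $a,b \in X$ we have $(a,b) \in R$ or $(b,a) \in R$. The diagonal case $a=b$ is covered by reflexivity, so I may assume $a \neq b$ throughout.

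I would prove totality by contradiction, assuming that neither $(a,b)$ nor $(b,a)$ lies in $R$. Here is where the topological hypotheses enter. Because $R$ is closed in the product $X \times X$, its complement is open; and since the product topology has a base of open rectangles, the point $(a,b)$ in the complement sits inside some basic open set, giving open neighborhoods $U \ni a$ and $V \ni b$ with $(U \times V) \cap R = \emptyset$. Applying the same reasoning to $(b,a)$, I obtain open neighborhoods $U' \ni b$ and $V' \ni a$ with $(U' \times V') \cap R = \emptyset$.

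Next I would intersect to get the right simultaneous neighborhoods: put $W_a := U \cap V'$ and $W_b := V \cap U'$, which are open neighborhoods of $a$ and $b$ respectively. Invoking density of $Y$, I pick $y_a \in Y \cap W_a$ and $y_b \in Y \cap W_b$. By construction $(y_a,y_b) \in U \times V$ and $(y_b,y_a) \in U' \times V'$, so neither pair belongs to $R$; and $y_a \neq y_b$, since otherwise reflexivity would force $(y_a,y_b) \in R$. Thus $y_a$ and $y_b$ are two distinct elements of $Y$ that are $R$-incomparable, contradicting the hypothesis that the restriction $R_Y$ is a \emph{linear} (hence total) order on $Y$. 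This contradiction yields totality, and the proof is complete.

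The argument is short, and I expect no serious obstacle; the one point demanding care — the main, if modest, difficulty — is the bookkeeping of the four neighborhoods so that a single application of density produces one pair $(y_a,y_b)$ that simultaneously witnesses the failure of both $(y_a,y_b)\in R$ and $(y_b,y_a)\in R$. This is exactly the interplay of the two hypotheses: closedness of $R$ opens its complement into basic rectangles, while density of $Y$ lets me place the witnesses inside those rectangles.
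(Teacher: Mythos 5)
Your proof is correct, and it is essentially the paper's argument unpacked: both rest on combining closedness of $R$ with density of $Y$ to force totality. The paper packages this in one line --- $R \cup R^{-1}$ is closed in $X \times X$ and contains the dense subset $Y \times Y$, hence equals $X \times X$ --- which sidesteps all of the neighborhood bookkeeping you rightly identify as the only delicate point of your version.
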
 
\begin{proof}
	%1208 (s) 
	The union $R \cup R^{-1}$ is closed in $X \times X$. The subset $Y \times Y$ is dense in $X \times X$ and is contained in $R \cup R^{-1}$. Hence, 
	$R \cup R^{-1} = X \times X$.  	
	%		 Assuming the contrary, 
	%	there exist $a, b \in X$ such that $a \npreceq b$ and $b \npreceq a$. The partial order $\preceq$ is closed in $X \times X$. Therefore, there exist open neighborhoods $U$ and $V$ of $a$ and $b$ respectively such that $u \npreceq v$ and $v \npreceq u$ for all $u \in U, v \in V$. However, since $Y$ is dense in $X$, we can choose $y_1, y_2 \in Y$ such that $y_1 \in U, y_2 \in V$. Then $y_1 \npreceq y_2$ and $y_2 \npreceq y_1$. Therefore, the order on $Y$ is not linear. 
	%	We get the desired contradiction. 
\end{proof}	

%\begin{proof} 
%	Let $a \neq b$ in $X$ and $(a,b) \notin R$. We have to show that $(b,a) \in R$. Since $R$ is closed in $X \times X$ there exist open neighborhoods $O_1 \in N(a), O_2 \in N(b)$ such that $O \cap R = \emptyset$. Therefore, $(x_1,x_2) \notin R$ for every $x_1 \in O_1, x_2 \in O_2$. Since $Y$ is dense in $X$ there exist two nets $u_i$ and $v_i$ such that $u_i \in Y \cap O_1$, $v_i \in Y \cap O_2$,   $\lim u_i = a, \lim v_i = b$. By our choice we have $(u_i,v_i) \notin R$. On the other hand, $R_Y$ is a linear order. Hence, $v_i \leq u_i$. Since $R$ is closed we obtain $\lim v_i  \leq \lim u_i$. This means that $b \leq a$. 
%\end{proof}	

\sk  \sk 
\subsection{Circular orders}  
Now we recall some definitions about circular orders. 
In contrast to the interval topology of linear orders, the topology of circular orders, as far we know, hardly is found in the literature. An exception is a book of Kok \cite{Kok}, where such a topology is mentioned episodically. 
%This is the reason why we include here some proofs. 
For more information and properties we refer to \cite{GM-c,GM-int}. 

\begin{defin} \label{newC} \cite{Kok,Cech,Tararin}  
	Let $X$ be a set. A ternary relation $R \subset X^3$ (sometimes denoted also by $\circ$) on $X$ is said to be a {\it circular} (or, \emph{cyclic}) order  
	if the following four conditions are satisfied. It is convenient sometimes to write shortly $[a,b,c]$ instead of $(a,b,c) \in R$. 
	\ben
	\item Cyclicity: 
	$[a,b,c] \Rightarrow [b,c,a]$;  
	
	\item Asymmetry: 
	$[a,b,c] \Rightarrow (b, a, c) \notin R$; 
	
	\item Transitivity:    
	$
	\begin{cases}
		[a,b,c] \\
		[a,c,d]
	\end{cases}
	$ 
	$\Rightarrow [a,b,d]$;
	
	\item Totality: 
	if $a, b, c \in X$ are distinct, then \ $[a, b, c]$ 
	%1605 $\vee$ 
	or $[a, c, b]$. 
	\een
\end{defin}

Observe that under this definition $[a,b,c]$ implies that $a,b,c$ are distinct. 
%%%%%% 

For 
%0411 distinct 
$a,b \in X$, define the (oriented) \emph{intervals}: 
$$
(a,b)_{\circ}:=\{x \in X: [a,x,b]\}, \  [a,b]_{\circ}:=(a,b) \cup \{a,b\}, \  [a,b)_{\circ}:=(a,b) \cup \{a\}, \ (a,b]_{\circ}:=(a,b) \cup \{b\}. 
$$
Sometimes we drop the subscript, or write $(a,b)_o$ when the context is clear. 
%0411     
Similarly can be defined $[a,b)$ and $(a,b]$. 
Clearly, $[a,a]=\{a\}$ for every $a \in X$.

\begin{prop} \label{Hausdorff}  \ 
	\ben 
	%0411 
	%	\item \cite[p. 6]{Kok} 
	%	For every c-order $R$ on $X$ the family of intervals 
	%	$$
	%	\{(a,b)_R : \ \  a,b \in X\}
	%	$$
	%	forms a base for a topology $\tau_R$ on $X$ which we call the \emph{interval topology} of $R$. 
	\item 
	For every c-order $\circ$ on $X$, the family of subsets
	%\begin{equation} \label{d:my} 
	$${\mathcal B}_1:=\{X \setminus [a,b]_\circ : \ a,b \in X\} \cup \{X\}$$  
	%=\{(b,a)_\circ :  a,b \in X, a \neq b\} \cup \{X \setminus \{a\}: a \in X\}\cup \{X\}
	%	\end{equation} 
forms a base for a topology $\tau_{\circ}$ on $X$ which we call the 
\emph{circular topology} (or, the \emph{interval topology}) of $\circ$. 
\item If $X$ contains at least three elements, then the (smaller) family of intervals  
%\begin{equation} \label{d:kok} 
$${\mathcal B}_2:=\{(a,b)_{\circ} : \  a,b \in X, a \neq b\}$$
%\end{equation} 
forms a base for the same topology $\tau_{\circ}$ on $X$. 
\item The circular topology $\tau_\circ$ of every circular order $\circ$ is Hausdorff. 
\een 
\end{prop}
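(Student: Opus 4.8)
The plan is to prove the three assertions of Proposition~\ref{Hausdorff} in order, establishing that $\mathcal{B}_1$ is a base, that $\mathcal{B}_2$ generates the same topology, and then Hausdorffness.

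For part (1), I would verify the two defining properties of a base: that $\mathcal{B}_1$ covers $X$ (immediate, since $X \in \mathcal{B}_1$), and that the intersection of two basic sets is a union of basic sets. A convenient reformulation is to note that each generating set is a complement of a ``closed arc'' $[a,b]_\circ$, so a finite intersection $\bigcap_i (X \setminus [a_i,b_i]_\circ) = X \setminus \bigcup_i [a_i,b_i]_\circ$ is the complement of a finite union of closed arcs. The key combinatorial fact I would isolate is that, given a point $x$ in such an intersection, one can find a single closed arc $[a,b]_\circ$ containing all the $[a_i,b_i]_\circ$ but avoiding $x$; this uses the transitivity and totality axioms of the circular order to ``merge'' overlapping or adjacent arcs while keeping $x$ outside. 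Then $x \in X \setminus [a,b]_\circ \subseteq \bigcap_i (X \setminus [a_i,b_i]_\circ)$, which is exactly the base condition.

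For part (2), assuming $|X| \geq 3$, I would show each side refines the other. Given a basic set $X \setminus [a,b]_\circ$ and a point $x$ in it, I need an open interval $(c,d)_\circ \ni x$ contained in $X \setminus [a,b]_\circ$; using totality I can pick $c,d$ with $[b,c,x]$ and $[x,d,a]$ (choosing auxiliary points, which exist since $|X|\geq 3$), so that $(c,d)_\circ$ lies in the complementary arc. Conversely, each open interval $(a,b)_\circ$ should be rewritten as a member of (or union of members of) $\tau_\circ$; indeed $(a,b)_\circ = X \setminus [b,a]_\circ$ essentially by definition of the intervals and the cyclicity/asymmetry axioms, so open intervals are already in $\mathcal{B}_1$-generated opens. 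The main care here is bookkeeping with the orientation of the intervals and handling the edge cases where the auxiliary points coincide with $a$ or $b$.

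For part (3), to separate distinct points $x \neq y$, I would use totality to pick a third point and produce two disjoint oriented intervals, one containing $x$ and one containing $y$; concretely, if $z$ is a third point with (say) $[x,z,y]$ holding in one of the two cyclic arrangements, the intervals $(y,z)_\circ$ and $(z,x)_\circ$ (or a suitable such pair) are disjoint by the asymmetry and transitivity axioms and separate $x$ from $y$. \textbf{The main obstacle} I anticipate is part (1): carefully proving the arc-merging lemma for arbitrary finite intersections, since the circular axioms do not directly give a ``union of two arcs is an arc'' statement and one must argue via transitivity that the relevant arcs can be absorbed into a single one avoiding the distinguished point $x$. The Hausdorff property in part (3) should then follow quickly once the interval base from part (2) is available.
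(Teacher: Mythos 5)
The paper states Proposition~\ref{Hausdorff} without any proof (it is offered as a corrected form of a definition from Kok's book), so there is no argument of the author's to compare yours with; I can only judge your proposal on its own terms. Parts (1) and (2) are sound in outline. The arc-merging lemma you isolate for (1) is true, and the painless way to prove it is to pass to the standard cut $\leq_x$ at the distinguished point $x$ (Remark~\ref{r:chech}.2 and Lemma~\ref{l:cut1}): every closed arc $[a_i,b_i]_\circ$ avoiding $x$ becomes an honest closed interval of the linear order $\leq_x$ lying strictly above $x$, so one takes $c=\min_i a_i$, $d=\max_i b_i$ and checks $[c,d]_{\leq_x}=[c,d]_\circ$; then $X\setminus[c,d]_\circ=(d,c)_\circ$ contains $x$ and lies in the intersection. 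For (2), your recipe of choosing $c,d$ with $[b,c,x]$ and $[x,d,a]$ can fail outright (such auxiliary points need not exist, e.g.\ in a finite cycle when $x$ is adjacent to $b$), but no auxiliary points are needed there at all: $X\setminus[a,b]_\circ=(b,a)_\circ$ is already a member of ${\mathcal B}_2$ when $a\neq b$. The only case genuinely requiring a third point is $X\setminus[a,a]_\circ=X\setminus\{a\}$.

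The genuine error is in part (3). With a single auxiliary point $z$ satisfying $[x,z,y]$, the pair $(y,z)_\circ$ and $(z,x)_\circ$ is \emph{not} disjoint in general: working in the cut $\leq_x$ one finds that both sets contain the entire arc $(y,x)_\circ$. Concretely, in the four-point cycle $C_4=\{0,1,2,3\}$ with $x=0$, $z=1$, $y=2$ one has $(y,z)_\circ=\{3,0\}$ and $(z,x)_\circ=\{2,3\}$, which share the point $3$. A correct separation needs witnesses on \emph{both} sides of the pair $\{x,y\}$: if $(x,y)_\circ$ and $(y,x)_\circ$ are both nonempty, pick $u\in(x,y)_\circ$ and $v\in(y,x)_\circ$; then $(v,u)_\circ\ni x$ and $(u,v)_\circ\ni y$ are disjoint. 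If one arc, say $(x,y)_\circ$, is empty but $(y,x)_\circ\ni v$, use $(v,y)_\circ\ni x$ and $(x,v)_\circ\ni y$, which are disjoint precisely because $(x,y)_\circ=\emptyset$; if both arcs are empty then $|X|=2$ and ${\mathcal B}_1$ already yields the discrete topology. Your parenthetical ``or a suitable such pair'' does not supply this; as written the separating sets fail on a four-point cycle, so this step must be replaced, although the repair is routine.
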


The family ${\mathcal B}_2$ 
and the corresponding topology was mentioned in \cite[p. 6]{Kok} without any assumptions on $X$. However, if $X$ contains only one or two elements then every $(a,b)_\circ$ is empty. In this case ${\mathcal B}_2$ is not a topological base at all. 
As we have seen in Proposition \ref{Hausdorff}, ${\mathcal B}_2$ generates a topology on $X$ under a (minor) assumption that $X$ contains at least 3 elements. 
So, the first assertion of Proposition \ref{Hausdorff} is an accurate form of the definition from \cite{Kok}.

Note that in every circularly ordered set $(X,\circ)$ we have $X \setminus [a,b]_{\circ}=(b,a)_{\circ}$ and $X \setminus (a,b)_{\circ}=[b,a]_{\circ}$ for every distinct $a \neq b$. So the ``closed interval" $[b,a]_{\circ}$ is always closed in the interval topology for all, not necessarily distinct, $a ,b$. 
%%%% (observe that the singleton $\{a\}$ is closed by Proposition \ref{Hausdorff}).   

\sk 

%\begin{remark} \label{r:CircOrdClosed} 
% Every circular order $\circ$  on a set $X$ is closed in the set $X \times X \times X$, where $X$ carries the interval topology $\tau_{\circ}$. 
%	\end{remark} 

\begin{remark} \label{r:chech} \ E. \v{C}ech \cite[p. 35]{Cech}
\ben 
\item 
Every linear order $\leq$ on $X$ defines a \emph{standard circular order} $\circ_{\leq}$ on $X$ as follows: 
$[x,y,z]$ iff one of the following conditions is satisfied:
$$x < y < z, \ y < z < x, \  z < x < y.$$	
\item (standard cuts) Let $(X,R)$ be a c-ordered set and $z \in X$. 
The relation 
$$z \leq_z x, \ \ \ \  a <_z b \Leftrightarrow [z,a,b] \ \ \ \forall a \neq b \neq z \neq a$$
is a linear order on $X$ and $z$ is the least element. This linear order 
restores the original circular order, meaning that $R_{\leq_z}=R$. 
\een
\end{remark}

\begin{lem} \label{l:closed} 
\cite{GM-c} 	
If $(X,\leq)$ is a linearly ordered set such that its %interval 
circular topology is compact, then the corresponding circular order generates the same topology (the compactness is essential). 
\end{lem}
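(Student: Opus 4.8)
The plan is to show that the interval topology $\tau_{\leq}$ and the circular topology $\tau_{\circ_{\leq}}$ of the induced standard circular order (Remark \ref{r:chech}.1) are literally the same topology on $X$, with compactness used only to upgrade an inclusion to an equality. The guiding principle is the standard fact that a continuous bijection from a compact space onto a Hausdorff space is a homeomorphism: the map I have in mind is the identity $\mathrm{id}\colon (X,\tau_{\leq}) \to (X,\tau_{\circ_{\leq}})$, whose target is Hausdorff by Proposition \ref{Hausdorff}(3).

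First I would verify, with no compactness assumption at all, that $\tau_{\circ_{\leq}} \subseteq \tau_{\leq}$ — this is precisely what makes the identity map above continuous. It is enough to inspect the basic oriented intervals of Proposition \ref{Hausdorff}(2): for $a<b$ one computes $(a,b)_{\circ} = \{x : a<x<b\}$, an ordinary open interval, whereas for $b<a$ one gets $(a,b)_{\circ} = (\leftarrow,b) \cup (a,\rightarrow)$, a union of two open rays; in both cases the set is $\tau_{\leq}$-open. Granting that $(X,\tau_{\leq})$ is compact, the identity map is now a continuous bijection from a compact space onto a Hausdorff one, hence a homeomorphism, so $\tau_{\leq} \subseteq \tau_{\circ_{\leq}}$ as well and the two topologies coincide.

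The only real content, and the step I expect to need care, is this reverse inclusion; it is instructive to see concretely where compactness enters. A compact linearly ordered space must have a greatest element $M$ and a least element $m$ (otherwise the cover $\{(\leftarrow,b)\}_{b}$, respectively $\{(a,\rightarrow)\}_{a}$, would admit no finite subcover). With these endpoints in hand the subbasic rays of $\tau_{\leq}$ are themselves oriented intervals, namely $(\leftarrow,b) = (M,b)_{\circ}$ and $(a,\rightarrow) = (a,m)_{\circ}$ — the ``outer'' ray collapses because nothing lies beyond $M$ or below $m$ — which yields $\tau_{\leq} \subseteq \tau_{\circ_{\leq}}$ directly, bypassing the abstract homeomorphism argument. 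The role of compactness is exactly to supply these endpoints, and it cannot be dropped: on $X = [0,1)$ there is no greatest element, the half-ray $[0,b)$ fails to be circularly open, and $\tau_{\circ_{\leq}}$ (the circle topology) is strictly coarser than $\tau_{\leq}$ (the half-open interval topology). This is the precise sense in which the compactness is essential.
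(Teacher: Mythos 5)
The paper gives no proof of Lemma \ref{l:closed} at all --- it is stated with a citation to \cite{GM-c} --- so there is nothing internal to compare against; judged on its own, your argument is correct and essentially complete. The unconditional inclusion $\tau_{\circ_{\leq}}\subseteq\tau_{\leq}$ is verified correctly: for $a<b$ the oriented interval $(a,b)_{\circ}$ is the ordinary open interval, for $b<a$ it is $(\leftarrow,b)\cup(a,\rightarrow)$, and the remaining basic sets of Proposition \ref{Hausdorff}, namely $X\setminus[a,a]_{\circ}=X\setminus\{a\}$, are likewise $\tau_{\leq}$-open; the compact-to-Hausdorff identity-map argument then yields equality. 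Your second, direct argument via the endpoints $m$ and $M$ is also sound, modulo the harmless degenerate cases $b=M$ and $a=m$, where one should use $(\leftarrow,M)=X\setminus[M,M]_{\circ}$ rather than $(M,M)_{\circ}=\emptyset$. One point worth making explicit: you (rightly) read the hypothesis as compactness of the \emph{interval} topology $\tau_{\leq}$, whereas the lemma as printed says ``its circular topology is compact''; your own example $X=[0,1)$ shows the literal reading would make the statement false, since there the circular topology is the compact circle topology yet $\tau_{\leq}\neq\tau_{\circ_{\leq}}$. So the hypothesis should be understood (or amended) as you have taken it, which is also how it is used elsewhere in the paper.
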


If $X_1 \to X_2$ is linear order preserving, then it is also c-order preserving in the sense of Definition \ref{d:c-ordMaps}. 
%Note that when $X$ is finite then the c-order is uniquely defined up to 
%isomorphisms (see Definition \ref{d:c-ordMaps}) of c-ordered sets and in this case, $\tau_R$ is discrete. 
On the set $\{0, 1, \cdots, n-1\}$ consider the standard c-order modulo $n$. 
Denote this c-ordered set, as well as its order, simply by $C_n$. %We write $[i,j,k]$ when $(i,j,k) \in C_n$.  
Every finite c-ordered set with $n$ elements is isomorphic (Definition \ref{d:c-ordMaps}) to $C_n$.

\begin{defin} \label{d:cycl}  Let $(X,R)$ be a c-ordered set. 
We say that a vector
$(x_1,x_2, \cdots, x_n) \in X^n$ is a \textit{cycle} in $X$ if it satisfies the following two conditions: 

\ben 
\item for every $[i,j,k]$ in $C_n$ and \textit{distinct} 
$x_i, x_j, x_k$ we have $[x_i,x_j,x_k]$; 
\item $x_i=x_k \ \Rightarrow$ \ 
$(x_i=x_{i+1}= \cdots =x_{k-1}=x_k) \ \vee \ (x_k=x_{k+1}=\cdots =x_{i-1}=x_i).$  
\een  
\textit{Injective cycle} means that all $x_i$ are distinct. 
\end{defin}

Using cycles (chains) one may define \textit{bounded variation} functions on a c-ordered set $(X,\circ)$ (resp. on a linearly ordered set $(X,\leq)$). 
This concept helps to study the tameness,  Banach representations and other properties of (c-)ordered dynamical systems. As well as getting a generalized Helly theorem (see \cite{GM-c,GM-TC,Me-Helly}).

\begin{defin} \label{d:c-ordMaps} Let $(X_1,R_1)$ and  $(X_2,R_2)$ be c-ordered sets. 
A function $f \colon X_1 \to X_2$ is said to be {\it c-order preserving}, 
or {\it COP}, if $f$ moves every cycle to a cycle. 
Equivalently, if it satisfies the following two conditions:

\ben 
\item for every $[a,b,c]$ in $X$ and \textit{distinct} 
$f(a), f(b), f(c)$ we have $[f(a), f(b), f(c)]$; 
\item if $f(a)=f(c)$ then $f$ is constant on one of the closed intervals $[a,c], [c,a]$. 
\een  
\end{defin}

In general, condition (1) does not imply condition (2). 
% as Remark \ref{r:2ElementImage} shows. 
Indeed, consider a 4-element cycle $X=Y=\{1,2,3,4\}$ and a selfmap $p: X \to X, p(1)=p(3)=1, p(2)=p(4)=2$. Then (1) is trivially satisfied because $f(X)<3$ but not (2).

\begin{remark} \label{r:COP} One may show that if $|p(X)| \geq 3$ then 
condition (1) in Definition \ref{d:c-ordMaps} implies (2). We omit the details. 
%	Let $p \colon X \to Y$ %be weakly c-order preserving 
%	satisfy 
%	and . Then 
%	$p$ is COP. %c-order preserving. 
\end{remark}

A composition of c-order-preserving maps is c-order preserving. 
%	Notation $f \in M_+(X_1,X_2)$. 
We let $M_+(X_1,X_2)$ be the collection of c-order-preserving
maps from $X_1$ into $X_2$.	 
$f$ is an \textit{isomorphism} if, in addition, $f$ is a bijection (in this case, of course, only (1) is enough).  
Denote by $\Aut(X)$ the group of all COP automorphisms $X \to X$ which is a subgroup of the symmetric group $S(X)$ of all bijections $X \to X$. %  (necessarily homeomorphisms). 
%If, $X$ is c-ordered compact space then $H_+(X)$ is a subgroup of $H(X)$.

\sk 
For every circularly ordered set $X$ and every subgroup $G \subset \Aut(X)$, the corresponding action $G \times X \to X$ defines a circularly ordered $G$-set $X$. The definitions of the group 
% $\Aut(X,\circ)$, 
$H_+(K,\circ)$ and \textit{circularly ordered compact dynamical $G$-system} are also understood for every compact $K$.  %, \cite{GM-c}. 

%%%%    \sk
%\begin{lem} 
%	Let $(X,<)$ be a linearly ordered set and $F \subset [0,1]^{X}$ be a point-separating family of order preserving maps. Denote by $A_F$ the algebra  generating by $F$ in  $l_{\infty}(X)$. Then the associated compactification $\g_F: X \to Y$, where $Y$ is the Gelfand space of $A_F$, is an order compactification. 
%\end{lem}

\begin{lem} \label{l:c-is-Open} \cite{GM-c}
Let $\circ$ be a circular order on $X$ and $\tau_{\circ}$ the induced circular (Hausdorff) topology. 
Then for every  $[a,b,c]$ there exist neighborhoods $U_1,U_2,U_3$ of $a,b,c$ respectively such that $[a',b',c']$ for every $(a',b',c') \in U_1 \times U_2 \times U_3.$  
\end{lem}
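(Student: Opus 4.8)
The plan is to show that the ternary relation $R$ is open in $X^3$; equivalently, that for $(a,b,c) \in R$ the point has a product neighborhood contained in $R$. The starting observation is that membership in $R$ is detected by the open intervals: by definition $[a',b',c']$ holds precisely when $b' \in (a',c')_{\circ}$, and every interval $(x,y)_{\circ}$ is open by Proposition \ref{Hausdorff}. Applying cyclicity (axiom (1) of Definition \ref{newC}) to $[a,b,c]$ gives $[b,c,a]$ and $[c,a,b]$, so that $a \in (c,b)_{\circ}$, $b \in (a,c)_{\circ}$ and $c \in (b,a)_{\circ}$: each of the three points already sits in the open arc cut out by the other two. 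One cannot, however, simply take these three arcs as $U_1,U_2,U_3$, since they pairwise overlap along the small arcs $(a,b)_{\circ},(b,c)_{\circ},(c,a)_{\circ}$, and a direct check shows that two points landing in the same small arc may be in the wrong relative position. So the arcs must be shrunk to be pairwise disjoint and arranged consecutively around the circle.

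First I would construct such consecutive arcs by inserting separators. When the small arcs are nonempty, choose $m_{ab} \in (a,b)_{\circ}$, $m_{bc} \in (b,c)_{\circ}$, $m_{ca} \in (c,a)_{\circ}$ and set $U_1 = (m_{ca},m_{ab})_{\circ}$, $U_2 = (m_{ab},m_{bc})_{\circ}$, $U_3 = (m_{bc},m_{ca})_{\circ}$. These three arcs partition $X \setminus \{m_{ab},m_{bc},m_{ca}\}$, hence are pairwise disjoint, and a short transitivity argument confirms $a \in U_1$, $b \in U_2$, $c \in U_3$. If some small arc, say $(a,b)_{\circ}$, is empty, then $a$ and $b$ are consecutive and there is no room for a separator; in that case I would let the two neighborhoods abut at the points themselves, taking $U_1 = (m_{ca},b)_{\circ}$ and $U_2 = (a,m_{bc})_{\circ}$, which remain disjoint precisely because $(a,b)_{\circ} = \emptyset$. (If all three small arcs vanish, then $X = \{a,b,c\}$ carries the discrete topology and the statement is immediate with the $U_i$ taken to be singletons.)

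The core of the proof is then the combinatorial claim that consecutive disjoint arcs force the order: whenever $a' \in (m_{ca},m_{ab})_{\circ}$, $b' \in (m_{ab},m_{bc})_{\circ}$ and $c' \in (m_{bc},m_{ca})_{\circ}$, one has $[a',b',c']$. This is where I expect the only real work to lie. Unwinding the interval memberships gives $[m_{ca},a',m_{ab}]$, $[m_{ab},b',m_{bc}]$, $[m_{bc},c',m_{ca}]$ together with $[m_{ab},m_{bc},m_{ca}]$, and the conclusion follows by repeated use of transitivity (axiom (3)) and asymmetry (axiom (2)); concretely, everything is a finite manipulation inside the six-element cycle $(m_{ca},a',m_{ab},b',m_{bc},c')$. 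The main obstacle is thus twofold and essentially routine: carrying out this transitivity chase cleanly, and the bookkeeping needed to repeat the same argument in the degenerate cases where one or more separators are replaced by $a,b,c$ themselves. Once the claim is established for the chosen $U_1,U_2,U_3$, the lemma follows.
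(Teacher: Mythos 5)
The paper offers no proof of this lemma --- it is quoted from \cite{GM-c} --- so there is nothing internal to compare against; judged on its own, your argument is correct and is the standard one: separate $a,b,c$ by points $m_{ab},m_{bc},m_{ca}$ chosen in the intervening arcs and take the three resulting consecutive open intervals as the neighborhoods. The transitivity chase you defer does close up (e.g.\ with $p=m_{ab}$, $q=m_{bc}$, $r=m_{ca}$ one derives $[p,b',c']$ and $[p,c',a']$, hence $[c',p,b']$ and $[c',a',p]$, and transitivity gives $[c',a',b']$, i.e.\ $[a',b',c']$), and your handling of the degenerate cases where some arc $(a,b)_\circ$ is empty (abutting half-open neighborhoods, or singletons when $X=\{a,b,c\}$) is sound.
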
 

\begin{cor} \label{c:subgr+completion} \
\begin{enumerate}
\item Let $K$ be a circularly ordered compact space. Then $H_+(K)$ is a closed subgroup of the homeomorphism group $H(K)$. 
% (endowed with the compact open topology).  
\item The completion of a c-orderly topological group $(G,\tau)$ is c-orderly. 
\end{enumerate} 	
\end{cor}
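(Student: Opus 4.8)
The plan is to establish (1) directly from the openness of the circular order relation, and then to derive (2) by combining (1) with the completeness of the ambient homeomorphism group via the standard ``closure equals completion'' argument.

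For (1), I would use that by Lemma \ref{l:c-is-Open} the ternary relation $R$ is \emph{open} in $K^3$. Fix a triple of distinct points $a,b,c$ with $[a,b,c]$ and set
$$A_{(a,b,c)}:=\{g\in H(K):[g(a),g(b),g(c)]\},\qquad A'_{(a,b,c)}:=\{g\in H(K):[g(a),g(c),g(b)]\}.$$
Each evaluation map $g\mapsto g(x)$ is continuous, so both sets are open. Since every $g\in H(K)$ is a bijection, $g(a),g(b),g(c)$ are distinct, and Totality together with Asymmetry and Cyclicity makes exactly one of $[g(a),g(b),g(c)]$ and $[g(a),g(c),g(b)]$ hold; hence $H(K)=A_{(a,b,c)}\sqcup A'_{(a,b,c)}$ with both pieces clopen. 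As a bijection is COP precisely when condition (1) of Definition \ref{d:c-ordMaps} holds, we get
$$H_+(K)=\bigcap\{A_{(a,b,c)}:\ a,b,c\ \text{distinct},\ [a,b,c]\},$$
an intersection of clopen sets, hence closed. (Equivalently, a net argument: if $g_i\to g$ pointwise with $g_i\in H_+(K)$ but $[g(a),g(c),g(b)]$ held, openness of $R$ would give $[g_i(a),g_i(c),g_i(b)]$ eventually, contradicting $[g_i(a),g_i(b),g_i(c)]$.)

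For (2), recall that $(G,\tau)$ being c-orderly means there is a topological embedding $j\colon G\hookrightarrow H_+(K)$ for some compact c-ordered $K$. The key preliminary is that $H(K)$ is Raikov-complete. To see this I would embed $g\mapsto(g,g^{-1})$ into $C(K,K)\times C(K,K)$, where $C(K,K)$ carries the topology of uniform convergence for the unique uniformity of the compact space $K$ (coinciding with the compact-open topology) and is complete. Composition is jointly continuous on $C(K,K)$ for compact $K$, so a Cauchy net $(g_i,g_i^{-1})$ with uniform limits $g_i\to f$, $g_i^{-1}\to h$ yields, upon passing to the limit in $g_i\circ g_i^{-1}=\mathrm{id}=g_i^{-1}\circ g_i$, the identities $f\circ h=h\circ f=\mathrm{id}$; thus $f\in H(K)$, $h=f^{-1}$, and the image is closed. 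Hence $H(K)$ is complete, and by part (1) its closed subgroup $H_+(K)$ is complete as well.

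Finally, $j(G)$ is a topological subgroup of the complete group $H_+(K)$, so its closure $\overline{j(G)}$ is a closed, hence complete, subgroup containing $j(G)\cong G$ densely. By uniqueness of the Raikov completion, $\overline{j(G)}$ is isomorphic to the completion $\widehat G$, and the subspace inclusion $\overline{j(G)}\hookrightarrow H_+(K)$ is a topological embedding into the circular order preserving homeomorphism group of the compact c-ordered space $K$; so $\widehat G$ is c-orderly by Definition \ref{d:orderly}. The routine part is (1); the main obstacle is the completeness of $H(K)$, where one must work with the two-sided (Raikov) uniformity so that inversion is controlled — precisely what the embedding $g\mapsto(g,g^{-1})$ and the joint continuity of composition on the compact $K$ provide.
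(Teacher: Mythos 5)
Your proof is correct and follows the route the paper intends: the paper's own proof is just the remark ``Straightforward using Lemma \ref{l:c-is-Open}'', and your part (1) is exactly the openness/clopen-decomposition argument that remark points to, while your part (2) reproduces the completeness argument of the linear analogue (Proposition \ref{p:complLin}): $H(K)$ is complete, $H_+(K)$ is closed hence complete, and $\overline{j(G)}$ realizes the completion inside $H_+(K)$.
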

\begin{proof}
Straightforward using Lemma \ref{l:c-is-Open}. 
%Analogous to the proof of Proposition \ref{p:complLin} using . 	
\end{proof}

\sk 
\begin{defin} \label{d:c-convex} 
Let $(X,R)$ be a circularly ordered set. Let us say that a subset $Y$ in $X$ is \textit{convex} in $X$ if for every $a,b \in Y$ at least one of the intervals $[a,b]$, $[b,a]$ is a subset of $Y$. 
\end{defin}

%\begin{remark} \label{r:convex} \ 
%	\begin{enumerate} \item 
According to Definition \ref{d:c-convex} exactly the following subsets of $(X,\circ)$ are convex:
$$\{\emptyset, X, (u,v)_{\circ}, [u,v]_{\circ}, (u,v]_{\circ}, [u,v)_{\circ}: \ u,v \in X\}.$$
%\item 

Now the second condition in Definition \ref{d:c-ordMaps} can be reformulated as follows: the preimage $f^{-1}(c)$ of a singleton $\{c\}$ is  convex. 
%	\end{enumerate} 
%\end{remark} 

Similar to the well known case of \textit{generalized ordered} (GO) spaces (see, for example, \cite[p. 457]{Nagata}), one may define \textit{generalized circularly ordered} (GCO) spaces as topological subspaces of circularly ordered spaces. Then for (GCO), convex topologically open subsets and lexicographic products play the similar roles as for (GO). 
%We are going to investigate this concept in further works. 

%\begin{remark}
%	A topological space $(X,\tau)$ with a circular order $R$ is a GCO-space if and only if
%	
%	(i) the topology $\tau$ is stronger than its order topology (namely $\{(a, b): a,b \in X\}$ are $\tau$-open sets), and 
%	
%	(ii) X has a base consisting of convex sets.
%	\begin{proof}
%		``Only if" direction is easy. 
%		
%		In order to prove ``if" direction, as in the case of linear orders (see for example, Nagata \cite[p. 457]{Nag}) use lexicographic orders. Namely, consider the lexicographic product $X \otimes_c \Z$ of the circularly ordered set $(X,R)$ with the linearly ordered $\Z$. Define 
%		$$
%		X^*:=X \times \{0\} \cup \{(x,n): \}
%		$$
%		  
%		\end{proof}
%\end{remark}

\sk 
\subsection{Lexicographic order}

\label{r:prod} 

For every c-ordered set $C$ and a linearly ordered set $L$, one may define the so-called 
\emph{c-ordered lexicographic product} $C \otimes L$. See, for example, \cite{CJ} and also Figure 1.   
\begin{figure}[h]
\begin{center} \label{F1}
\scalebox{0.3}{\includegraphics{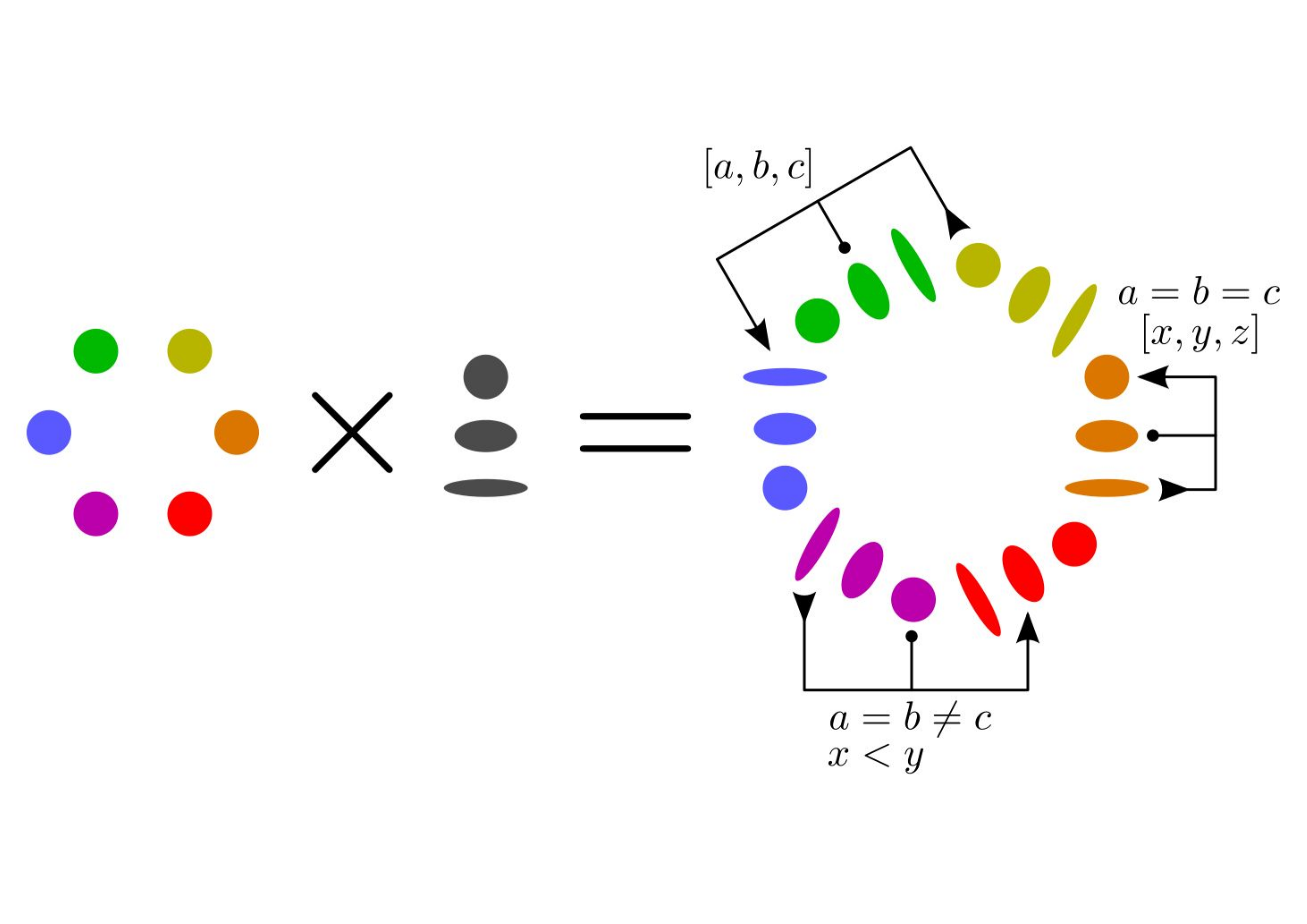}}
\caption{c-ordered lexicographic product (from Wikipedia - \textit{Cyclic order})}
\end{center}
\end{figure} 
%Every splitting space $X_A$ in Lemma \ref{c-doubling} is a c-ordered subset 
%of $\T \times \{-,+\}$. 

\begin{defin} \label{d:lexic}  
More formally, let $(a,x), (b,y), (c,z)$ be distinct points of $C \times L$. Then 
$[(a,x), (b,y), (c,z)]$ in $C \otimes L$ will mean that one of the following conditions is satisfied:
\begin{enumerate}
\item $[a, b, c]$.
\item $a = b \neq c$ and $x < y$.
\item $b = c \neq a$ and $y < z$.
\item $c = a \neq b$ and $z < x$.
\item $a = b = c$ and $[x, y, z]$ (in the cyclic order on $L$ induced by the linear order). 
\end{enumerate} 
\end{defin}

\sk
\subsection{Compactness of the circular topology}

Our aim is to obtain  a natural circular version of the following classical result.

\begin{f} \label{f:LinComp} 
The interval topology on $X$ of a linear order $\leq$ is compact if and only if every subset of X has a supremum (including that $\sup(\emptyset)=\min(X)$ must exist).
\end{f}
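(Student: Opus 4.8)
The plan is to prove both implications straight from the definition of compactness, working throughout with the subbase of rays $(a,\to)$ and $(\leftarrow,b)$ (and I may assume $X\neq\emptyset$, the empty case being degenerate). The forward implication will be a ``supremum induction'' that climbs through $X$, and the converse will produce, from a hypothetical missing supremum, an explicit open cover by rays with no finite subcover.

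For the direction \textbf{completeness $\Rightarrow$ compact}, I would first record that the hypothesis supplies a least element $\bot:=\sup(\emptyset)=\min X$ and a greatest element $\top:=\sup(X)=\max X$. Given an arbitrary open cover $\mathcal U$ of $X$, set $S:=\{x\in X:\ \{y\in X:\ y\leq x\}\ \text{is covered by finitely many members of}\ \mathcal U\}$. One checks that $S$ is a nonempty down-set: it contains $\bot$ because a single $U\in\mathcal U$ with $\bot\in U$ covers $\{\bot\}$, and if $x\in S$ and $y<x$ then $y\in S$. Hence $m:=\sup S$ exists. The core of the argument is to show $m\in S$ and then $m=\top$. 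Choosing $U\in\mathcal U$ with $m\in U$, a basic neighborhood of $m$ inside $U$ contains an interval $(a,m]$ for some $a<m$ whenever $m>\bot$; since $a$ is not an upper bound of $S$, some $x\in S$ has $x>a$, and then $\{y:y\leq m\}=\{y:y\leq x\}\cup(a,m]$ is finitely covered, so $m\in S$. Symmetrically, if $m<\top$, a neighborhood of $m$ contains $[m,b)$ with $b>m$, and producing a point strictly above $m$ that still lies in $S$ (using a point of $(m,b)$ when it is nonempty, and otherwise the immediate successor $m^{+}$ covered by one further member of $\mathcal U$) contradicts $m=\sup S$. Thus $m=\top\in S$, which says exactly that $X$ has a finite subcover.

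For the converse \textbf{compact $\Rightarrow$ completeness}, I would first obtain the extreme elements. If $X$ had no least element then every point lies strictly above some other, so $\{(a,\to):a\in X\}$ covers $X$; a finite subcover reduces to a single ray $(a_0,\to)$, which omits its endpoint $a_0$ — impossible. Hence $\min X=\sup(\emptyset)$ exists, and dually $\max X=\sup(X)$. For a general nonempty $S\subseteq X$, suppose toward a contradiction that $S$ has no supremum, i.e. the set $B$ of upper bounds of $S$ (nonempty since $\max X\in B$) has no least element. Then the family $\{(\leftarrow,s):s\in S\}\cup\{(b,\to):b\in B\}$ covers $X$: any point that is not an upper bound of $S$ lies strictly below some $s\in S$, while any $b\in B$ lies strictly above some smaller upper bound precisely because $B$ has no minimum. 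Extracting a finite subcover with left endpoints $s_1,\dots,s_m\in S$ and right endpoints $b_1,\dots,b_n\in B$, I would set $s^{\ast}:=\max_i s_i\in S$ and check it is covered by none of the chosen rays: $s^{\ast}\not< s_i$ for every $i$, and $s^{\ast}\leq b_j$ for every $j$ because each $b_j$ is an upper bound of $S$ and $s^{\ast}\in S$. This contradiction shows $\sup S$ exists.

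The down-set bookkeeping and the ray-cover verifications are routine. The step I expect to demand the most care is the neighborhood analysis at $m=\sup S$ in the forward direction: one must handle the boundary cases $m=\bot$ and $m=\top$ together with immediate successors, so that the induction genuinely passes beyond every point rather than stalling at a gap or a jump in the order.
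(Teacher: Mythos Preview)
The paper states this as a classical fact and gives no proof of its own, so there is nothing to compare your approach against; your argument is the standard one and is correct in outline and detail.

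Two small points worth tightening. In the converse direction, when you extract a finite subcover $\{(\leftarrow,s_i)\}_{i\le m}\cup\{(b_j,\to)\}_{j\le n}$ you implicitly assume $m\ge 1$ before forming $s^\ast=\max_i s_i$; just note that if $m=0$ then any $s\in S$ is uncovered (since $s\le b_j$ for each $j$), which is the same contradiction. In the forward direction, your sentence ``a neighborhood of $m$ contains $[m,b)$ with $b>m$'' should also allow the basic neighborhood to be of the form $(a,\to)$ or $X$, in which case it already contains $[m,\top]$ and you reach $\top\in S$ in one step; your closing remark shows you are aware of this case split, so this is only a matter of phrasing.
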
 

The following result is a particular case of \cite[Lemma 2.1]{Kem} proved by N. Kemoto. It can be obtained also directly using Fact \ref{f:LinComp}.

\begin{f} \label{f:Kem} \cite{Kem} 
A lexicographic linearly ordered product $Y \otimes_l L$ of a compact linearly ordered space $Y$ and a compact linearly ordered space $L$ is compact in the interval topology.  
\end{f}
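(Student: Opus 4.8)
The plan is to deduce compactness of $Y \otimes_l L$ from the order-completeness criterion of Fact \ref{f:LinComp}: it suffices to show that, in the lexicographic order on $Y \times L$, every subset admits a supremum (with the convention $\sup(\emptyset)=\min(Y\times L)$). Since $Y$ and $L$ are both compact, each has a least element by Fact \ref{f:LinComp}, so $\min(Y\times L) = (\min Y, \min L)$ exists; this disposes of the empty set together with the convention attached to it. The entire problem therefore reduces to producing $\sup A$ for an arbitrary nonempty $A \subseteq Y\times L$.

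Given such an $A$, I would first project onto the dominant coordinate: let $\pi(A) \subseteq Y$ be the image of $A$ under the first projection, which is nonempty, and set $y^{*}:=\sup \pi(A)$, existing because $Y$ is compact. The candidate for $\sup A$ then splits into two cases according to whether this first-coordinate supremum is realized inside $A$. If $y^{*}\in \pi(A)$, consider the fiber $A_{y^{*}}:=\{\ell \in L : (y^{*},\ell)\in A\}$, which is nonempty, put $\ell^{*}:=\sup A_{y^{*}}$ (existing since $L$ is compact), and claim $\sup A = (y^{*},\ell^{*})$. If $y^{*}\notin \pi(A)$, so that $y^{*}$ is a strict limit of first coordinates not attained in $A$, then I claim instead $\sup A = (y^{*},\min L)$.

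The verification in both cases is routine but is the substance of the argument. To check that the candidate is an upper bound, take $(y,\ell)\in A$: then $y\le y^{*}$, and either $y<y^{*}$, in which case the pair lies strictly below by the first coordinate, or $y=y^{*}$, which in the attained case forces $\ell \le \ell^{*}$ and in the unattained case simply cannot occur. To check leastness, suppose $(u,v)$ is an upper bound with $(u,v)$ strictly below the candidate. If $u<y^{*}$, then $u$ fails to bound $\pi(A)$, so some $y\in\pi(A)$ has $y>u$ and a corresponding element of $A$ overshoots $(u,v)$ in the first coordinate; if $u=y^{*}$ with $v<\ell^{*}$ (possible only in the attained case), then some $\ell\in A_{y^{*}}$ exceeds $v$, and $(y^{*},\ell)\in A$ overshoots $(u,v)$ in the second coordinate. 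Either way one contradicts $(u,v)$ being an upper bound.

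The main obstacle, and the genuinely lexicographic phenomenon, is the unattained case, where the second coordinate of the supremum collapses to $\min L$ rather than being governed by any fiber of $A$; pinning down this case distinction correctly, and in particular invoking compactness of $L$ exactly to supply $\min L$ there and $\sup A_{y^{*}}$ in the attained case, is where care is required. Once both cases are settled, every subset of $Y\times L$ has a supremum, and Fact \ref{f:LinComp} yields compactness of the interval topology on $Y \otimes_l L$.
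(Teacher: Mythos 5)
Your proof is correct, and it follows exactly the route the paper indicates: the paper gives no detailed argument for this Fact (it cites Kemoto's Lemma~2.1 and merely remarks that the statement ``can be obtained also directly using Fact~\ref{f:LinComp}''), and your supremum construction --- splitting on whether $\sup\pi(A)$ is attained in the first coordinate, with the collapse to $\min L$ in the unattained case --- is precisely that direct derivation, carried out correctly.
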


Following Novak \cite{Novak-cuts}, we define cuts and gaps in c-ordered sets. 
Let $(X,\circ)$ be a c-ordered set. A linear order $\leq$ on $X$ is said to be a {\it cut} if 
$$
a < b < c \ \text{in} \ (X, \leq) \ \text{implies that} \  [a,b,c] \ \text{in} \ (X,\circ). 
$$
A {\it gap} on $(X,\circ)$ is a cut $(X, \leq)$ such that it has neither the least nor the greatest element. This continues the idea of classical Dedekind cuts for linear orders. 

\begin{lem}
\label{l:cut1} 
Let $\leq$ be a cut on $(X,\circ)$.
\ben 
\item \cite[Lemma 2.2]{Novak-cuts} If 
$[a,b,c]$ then either 
$a<b<c$ or $b<c<a$ or $c < a<b$. 
\item \cite[Theorem 2.5]{Novak-cuts} Every linear order $\leq_z$ (Remark \ref{r:chech}) is a (standard) cut on $(X,\circ)$ for every $z \in X$.
\een
\end{lem}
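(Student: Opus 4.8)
The plan is to derive both statements purely combinatorially from the four axioms of a circular order (cyclicity, asymmetry, transitivity, totality) in Definition \ref{newC} together with the defining property of a cut; neither part will need the interval topology.

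For part (1), I would first record the elementary consequence (noted right after Definition \ref{newC}) that $[a,b,c]$ forces $a,b,c$ to be pairwise distinct, so that $\leq$ arranges them in exactly one of six ways. Cyclicity turns $[a,b,c]$ into the three rotations $[a,b,c]$, $[b,c,a]$, $[c,a,b]$, and asymmetry applied to each of these rules the three ``reflected'' triples $[b,a,c]$, $[c,b,a]$, $[a,c,b]$ out of $R$. The key step is then to pair the six possible $\leq$-orderings of $\{a,b,c\}$ with their cyclic classes: the three orderings $a<b<c$, $b<c<a$, $c<a<b$ are exactly the three conclusions we want, whereas each of the remaining orderings $a<c<b$, $b<a<c$, $c<b<a$ would, by the cut condition, force respectively $[a,c,b]$, $[b,a,c]$, $[c,b,a]$ to hold, contradicting the forbidden list. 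Hence $\leq$ must place $a,b,c$ in one of the three admissible rotations.

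For part (2), I would check the cut condition $a<_z b <_z c \Rightarrow [a,b,c]$ by cases on whether $a=z$. Since $z$ is the $\leq_z$-least element (Remark \ref{r:chech}), the case $a \neq z$ forces $z <_z a <_z b <_z c$, so all three of $a,b,c$ differ from $z$ and the hypotheses unwind into the circular triples $[z,a,b]$ and $[z,b,c]$. The decisive move is to apply cyclicity to get $[b,z,a]$ and $[b,c,z]$, and then invoke transitivity based at $b$: from $[b,c,z]$ and $[b,z,a]$ one obtains $[b,c,a]$, and a final rotation yields $[a,b,c]$. The case $a=z$ is immediate, since then $b,c \neq z$ and $b <_z c$ literally means $[z,b,c]=[a,b,c]$.

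I expect the only real subtlety to be the bookkeeping of the axioms — in particular, choosing the correct base point for the transitivity step in part (2) (base $b$, rather than $z$, which would merely reproduce the already-known triple $[z,a,c]$), and correctly matching each of the six linear arrangements to its cyclic class in part (1). Once these pairings are set up, both assertions follow in a few lines.
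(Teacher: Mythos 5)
Your proof is correct. Note, however, that the paper itself gives no proof of this lemma at all: both parts are stated with citations to Novak's paper on cuts in cyclically ordered sets ([Lemma 2.2] and [Theorem 2.5] of \cite{Novak-cuts}), so there is nothing in the source to compare against line by line. Your argument supplies the missing details directly from the axioms of Definition \ref{newC}. In part (1) the elimination of the three ``reflected'' orderings is exactly right: from $[a,b,c]$, cyclicity yields $[b,c,a]$ and $[c,a,b]$, and asymmetry applied to each rotation forbids $(b,a,c)$, $(c,b,a)$ and $(a,c,b)$, so the cut condition rules out the orderings $b<a<c$, $c<b<a$ and $a<c<b$. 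In part (2) the case split on $a=z$ is the right one, and the transitivity step based at $b$ is verified by direct substitution into axiom (3): $[b,c,z]$ and $[b,z,a]$ give $[b,c,a]$, hence $[a,b,c]$ by cyclicity; you correctly use that $z$ is the $\leq_z$-least element to ensure $a,b,c$ are all distinct from $z$ when $a\neq z$, so that the equivalence $u<_z v \Leftrightarrow [z,u,v]$ applies. The proof is complete and self-contained.
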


\begin{lem} \label{l:cut} 
Let $\leq$ be a cut on $(X,\circ)$.
\ben 

\item 
$(a,b)_{\leq}=(a,b)_{\circ}$, $[a,b]_{\leq}=[a,b]_{\circ}$,  $[a,b)_{\leq}=[a,b)_{\circ}$, $(a,b]_{\leq}=(a,b]_{\circ}$,   for every $a<b$. 
\item For every interval $[a,b]_{\circ}$ the subspace topology inherited from $\tau_{\circ}$ is the same as the interval topology $\tau_{\leq_a}$ of $\leq_a$. The same is true for other convex intervals $[a,b)_\circ$, $(a,b)_\circ$, $(a,b]_{\circ}$. 
\een
\end{lem}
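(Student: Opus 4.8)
The plan is to prove Lemma \ref{l:cut} by systematically translating between the circular order $\circ$ and a given cut $\leq$, using Lemma \ref{l:cut1}(1) as the main bridge. For part (1), I would fix $a < b$ in the cut and prove the four interval identities by double inclusion at the level of points. Consider $(a,b)_{\leq} = \{x : a < x < b\}$ and $(a,b)_{\circ} = \{x : [a,x,b]\}$. For the forward inclusion, if $a < x < b$ then the triple $a,x,b$ is $\leq$-increasing, so by the defining property of a cut we get $[a,x,b]$, i.e. $x \in (a,b)_{\circ}$. For the reverse inclusion, suppose $[a,x,b]$ with $x$ distinct from $a,b$; then Lemma \ref{l:cut1}(1) gives one of $a<x<b$, $x<b<a$, $b<a<x$. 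Since we assumed $a < b$, the latter two cases are incompatible with $a<b$ (they would force $b<a$), so only $a<x<b$ survives, giving $x \in (a,b)_{\leq}$. The identities for the half-open and closed intervals follow immediately by adjoining the appropriate endpoints, since both the $\leq$-versions and the $\circ$-versions are defined by attaching $\{a\}$, $\{b\}$, or $\{a,b\}$ to the open interval.

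For part (2), the goal is to show that the subspace topology on a convex interval $I$ (say $I = [a,b]_{\circ}$) inherited from $\tau_{\circ}$ coincides with the interval topology $\tau_{\leq_a}$ of the standard cut $\leq_a$ from Remark \ref{r:chech}.2. The key point is that $\leq_a$ is itself a cut (Lemma \ref{l:cut1}(2)) in which $a$ is the least element, so by part (1) all $\leq_a$-intervals with left endpoint comparisons agree with the corresponding $\circ$-intervals. My strategy is to compare subbases. The topology $\tau_{\leq_a}$ on $I$ is generated by rays $(\leftarrow, c)_{\leq_a} \cap I$ and $(c, \to)_{\leq_a} \cap I$. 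I would rewrite each such ray, restricted to the interval $I$, as a $\circ$-interval: a ray $(\leftarrow, c)_{\leq_a}$ intersected with $[a,b]$ is $[a, c)_{\circ}$, and $(c, \to)_{\leq_a} \cap [a,b]$ is $(c, b]_{\circ}$, using part (1) together with the fact that $a$ is the $\leq_a$-least element. These are exactly the relatively open convex pieces, so the traces of $\tau_{\circ}$-basic sets (the intervals $(u,v)_{\circ}$ from $\mathcal{B}_2$) and the $\tau_{\leq_a}$-basic sets generate the same relative topology on $I$.

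The main obstacle I anticipate is the careful bookkeeping at the endpoints and the verification that a $\tau_{\circ}$-basic open set $(u,v)_{\circ}$, when intersected with the convex interval $I$, is $\tau_{\leq_a}$-open, and conversely. Here one must handle the cases where $u$ or $v$ lies outside $I$, inside $I$, or coincides with an endpoint of $I$; in each case the trace $(u,v)_{\circ} \cap I$ is again a convex subinterval of $I$ with endpoints that are either $a$, $b$, $u$, or $v$, and hence is $\tau_{\leq_a}$-open. The converse direction—that every $\tau_{\leq_a}$-open ray trace is a union of such $\circ$-interval traces—is the more delicate matching, and I would organize it by exhausting the finitely many positional cases for the cut point $c$ relative to $a$ and $b$. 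Once this case analysis is complete, the coincidence of the two topologies follows, and the analogous statements for $[a,b)_{\circ}$, $(a,b)_{\circ}$, and $(a,b]_{\circ}$ are obtained by the same argument after deleting the relevant endpoints.
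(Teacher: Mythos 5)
Your proposal is correct and follows essentially the same route as the paper: part (1) by double inclusion, using the cut definition for one direction and Lemma \ref{l:cut1}(1) plus the hypothesis $a<b$ to eliminate the other two cases for the converse, and part (2) by matching $\tau_{\leq_a}$-subbasic ray traces with traces of circular intervals in both directions. The paper merely makes the endpoint case analysis you defer explicit, via the identities $[a,x)_{\leq_a}=(b,x)_{\circ}\cap[a,b]_{\circ}$ and the two-case formula for $(c,d)_{\circ}\cap[a,b]_{\circ}$ according to whether $[a,c,d]$ or $[a,d,c]$.
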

\begin{proof}
(1) It is enough to show that	$(a,b)_{\leq}=(a,b)_{\circ}$. 

Let $x \in (a,b)_{\leq}$. Then by definition of cut we have $[a,x,b]$. Hence, $x \in (a,b)_{\circ}$. 

Conversely, let $x \in (a,b)_{\circ}$. Then by Lemma \ref{l:cut1}.1, either 
$a<x<b$ or $x<b<a$ or $b < a<x$. By our assumption $a<b$. So, we necessarily have $a<x<b$. Hence, $x \in (a,b)_{\leq}$.

(2) 
%	The subset $[a,b]_{\circ}$ is convex in the sense of Definition \ref{d:c-convex} (for every $u,v \in [a,b]_{\circ}$ we have $[u,v]_{\circ} \subset [a,b]_{\circ}$ or $[v,u]_{\circ} \subset [a,b]_{\circ}$). 
First note that 
$$\forall x \in [a,b]_{\circ} \ \ (-\infty,x)_{\leq_a}=[a,x)_{\leq_a}=(b,x)_{\circ} \cap [a,b]_{\circ}.$$
This shows that the circular topology contains the interval topology. 
In order to see the converse inclusion note that 
$$
(c,d)_\circ \cap [a,b]_{\circ}=(c,d)_{\leq_a} \cap [a,b]_{\circ} \ \text{if} \ [a,c,d]
$$
and 
$$
(c,d)_\circ \cap [a,b]_{\circ} =(c,b]_{\leq_a} \cup [a,d)_{\leq_a} \ \text{if} \ [a,d,c]. 
$$

For other convex subsets the proof is similar. 
\end{proof}

\begin{thm} \label{t:c-comp} 
Let $\tau_{\circ}$ be the circular topology of a circular order on a set $X$. The following conditions are equivalent:
\ben 
\item $\tau_{\circ}$ is a compact topology. 
\item $(X,\circ)$ is complete in the sense of Novak \cite{Novak-cuts} (no gaps in $(X,\circ)$).  
\item $[a,b]_{\circ}$ is compact in the subspace topology of $(X,\tau_{\circ})$ for every $a,b \in X$.
\een 
\end{thm}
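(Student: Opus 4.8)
The plan is to dispose of $(1)\Leftrightarrow(3)$ by soft topology and then reduce $(2)\Leftrightarrow(3)$ to the linear case through Lemma \ref{l:cut} and Fact \ref{f:LinComp}. For $(1)\Rightarrow(3)$, recall from the remark after Proposition \ref{Hausdorff} that $[a,b]_{\circ}=X\setminus(b,a)_{\circ}$ is closed (and $[a,a]_{\circ}=\{a\}$ is closed by Hausdorffness, Proposition \ref{Hausdorff}), so each interval is a closed subspace of the compact space and hence compact. For $(3)\Rightarrow(1)$, I would fix two distinct points $a\ne b$ (the cases $|X|\le 1$ being trivial) and use totality together with cyclicity to write $X=[a,b]_{\circ}\cup[b,a]_{\circ}$; a finite union of compact subspaces covering $X$ makes $X$ compact.

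The core is $(2)\Leftrightarrow(3)$, and the first move is to turn each interval into a linearly ordered space. By Lemma \ref{l:cut}.2 the $\tau_{\circ}$-subspace topology on $[a,b]_{\circ}$ coincides with the interval topology of the standard cut $\leq_{a}$ restricted to $[a,b]_{\circ}=[a,b]_{\leq_{a}}$ (Lemma \ref{l:cut}.1), which is a linearly ordered set with least element $a$ and greatest element $b$. Applying Fact \ref{f:LinComp} to this bounded closed interval, condition $(3)$ is equivalent to the assertion that for all $a,b$ every subset of $[a,b]_{\leq_{a}}$ has a supremum in $[a,b]_{\leq_{a}}$, i.e. that every bounded closed interval is Dedekind complete.

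It then remains to match interval-completeness with the absence of gaps, in both directions by ``re-breaking the circle''. For $\neg(3)\Rightarrow\neg(2)$, given $A\subseteq[a,b]_{\leq_{a}}$ without supremum I would partition $[a,b]_{\leq_{a}}=D\sqcup U$ into the non-upper-bounds $D$ and the upper bounds $U$ of $A$; a short argument shows $D$ has no greatest and $U$ no least element. Reordering $X$ blockwise as $U$, then $(b,\to)_{\leq_{a}}$, then $D$, each block retaining $\leq_{a}$, produces a linear order $\leq'$ with neither least nor greatest element, and cyclicity of $\circ$ forces $\leq'$ to again be a cut, hence a gap. Conversely, for $\neg(2)\Rightarrow\neg(3)$, given a gap $\leq$ I would pick any $z\in X$ and pass to $\leq_{z}$; since two cuts of the same circular order differ by a rotation, $\leq_{z}$ is $\leq$ rotated to start at $z$, moving the part strictly $\leq$-below $z$ to the top and thereby converting the ``gap at infinity'' of $\leq$ into an interior Dedekind gap of $\leq_{z}$ between the blocks $\{x:z<_{\leq}x\}$ and $\{x:x<_{\leq}z\}$. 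Choosing $b$ in the lower block, the set $\{z\}\cup\{x:z<_{\leq}x\}$ is bounded by $b$ yet has no supremum in $[z,b]_{\leq_{z}}=[z,b]_{\circ}$, so this interval fails to be Dedekind complete.

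The step I expect to be the main obstacle is this gaps–intervals correspondence, and concretely the verification that the block-reordered relation $\leq'$ is genuinely a cut: one must check, case by case on which blocks contain $x,y,w$, that $x<'y<'w$ implies $[x,y,w]$, and it is precisely here that the cyclicity axiom of $\circ$ combines with the cut property of $\leq_{a}$ (Lemma \ref{l:cut1}.1) to do the work. The remaining bookkeeping—nonemptiness of the blocks, the existence of elements of $(X,\leq)$ strictly above and below $z$, and the genuine failure of the relevant least upper bounds—should be routine.
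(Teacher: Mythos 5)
Your argument is correct, and its engine --- manufacturing a gap from a subset of $[a,b]_{\leq_a}$ with no supremum by partitioning into non-upper-bounds and upper bounds and then cyclically re-ordering the blocks --- is exactly the paper's construction in its implication $(2)\Rightarrow(3)$ (the paper's $X_1$ is your $U\cup(b,\to)_{\leq_a}$ and its $X_2$ is your $D$, so the resulting cut is literally the same relation). Where you differ is in how the equivalence is closed: the paper runs the cycle $(1)\Rightarrow(2)\Rightarrow(3)\Rightarrow(1)$, proving $(1)\Rightarrow(2)$ by exhibiting, for a gap $\leq$, the $\tau_\circ$-open cover $\{(a,b)_{\leq}: a<b\}$ of $X$ with no finite subcover; you instead prove $(1)\Leftrightarrow(3)$ by soft topology (closedness of $[a,b]_\circ$ plus $X=[a,b]_\circ\cup[b,a]_\circ$) and then supply the extra implication $\neg(2)\Rightarrow\neg(3)$ by rotating a gap into the standard cut $\leq_z$ and locating a bounded set without supremum inside $[z,b]_\circ$. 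Your route is slightly longer (three nontrivial implications instead of the paper's two) but each piece is sound, and your $\neg(2)\Rightarrow\neg(3)$ argument is a legitimate alternative to the open-cover step; note also that the verification you flag as the main obstacle (that the block-rotated order is again a cut) is asserted without proof in the paper as well, so you are not missing any ingredient the paper supplies.
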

\begin{proof}
(1) $\Rightarrow$ (2) 
Let $(X,\leq)$ be a gap. Then 
$\cup \{(a,b)_{\leq}: a < b\}=X$ has no finite subcovering. This cover is open in the circular topology  because  $(a,b)_{\leq}=(a,b)_{\circ}$ for every $a<b$ by Lemma \ref{l:cut}.2. 

\sk 
(2) $\Rightarrow$ (3)   
Assume that 
$[a,b]_{\circ}$ is not compact in its subspace topology. Then $([a,b]_{\leq_a},\leq_a)$ 
has a subset $A$ such that $\sup(A)$ does not exist (use Fact \ref{f:LinComp}). Then $([a,b]_{\leq_a}, \leq_a)$ has a ``linear gap", a subset $A \subset [a,b]_{\leq_a}$ without supremum. Then $A$ does not have supremum also as a subset of $(X_{\leq_a},\leq_a)$. 
Let  $X_{1}:=\{x \in X: \forall a \in A \ \ a<x\}$, $X_2:=X \setminus X_1$. 
Define now a linear order $\leq_A$ by the following rule 
$x_1 <_A x_2$ for every $x_1 \in X_1, x_2 \in X_2$ keeping the old relation for other possible pairs $(x,y) \in X=X_1 \cup X_2$. Then $\leq_A$ is a cut on $X$ which is a gap for $(X,\circ)$.

\sk 
(3) $\Rightarrow$ (1) 
For every $a \neq b$ we have 
$X=[a,b]_{\circ} \cup [b,a]_{\circ}$. Hence, $X$ is compact being a union of two compact subsets. 
%	Let $\a$ be an open cover of the topological space $(X,\tau_{\circ})$. Choose 
%	a point $a \in X$ and an open set $U \in \a$ such that $a \in U$. By definition of the interval topology $\tau_{\circ}$ there exist $s,t \in X$ such that $a \in (s,t) \subset U$. 
%	There are two cases:
%	
%	1. $(s,t)=[a,t)$. 
%	
%	Then $s=Max (X, <_a)$. So, $X=[a,s]$ and this space is compact by our assumption. 
%	
%	\sk 
%	2. $(s,t) \neq [a,t)$.
%	
%	Then there exists $b \in (s,t)$ such that $[b,a,t]$. In this case $X=[a,b] \cup U$. Using the compactness of $[a,b]$ we can conclude that there exists a finite subcover of $\a$.   
\end{proof}

Note that every circularly ordered set admits a completion \cite{Novak-cuts}.

\begin{prop} \label{p:LexComp}  
A lexicographic c-ordered product $K \otimes_c L$ of compact c-ordered space $K$ and a compact linearly ordered space $L$ is a compact c-ordered space.  
\end{prop}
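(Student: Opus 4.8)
The plan is to show that the circular topology $\tau_\circ$ of $K \otimes_c L$ is compact and then read off the statement from Theorem \ref{t:c-comp}. The key reduction is the identity $X = [P,Q]_\circ \cup [Q,P]_\circ$, valid for any two distinct points $P\neq Q$ of a c-ordered set $X$ (since $X \setminus [P,Q]_\circ = (Q,P)_\circ \subseteq [Q,P]_\circ$), which is precisely the device used in the implication $(3)\Rightarrow(1)$ of Theorem \ref{t:c-comp}. So it is enough to produce a single pair $P \neq Q$ for which both $[P,Q]_\circ$ and $[Q,P]_\circ$ are compact in their subspace topologies; their union is then the whole space. If $K$ is a singleton, $K \otimes_c L$ is just $(L,\circ_{\leq})$ and compactness is Lemma \ref{l:closed}; otherwise I would pick $P=(a,x)$, $Q=(b,y)$ with $a\neq b$ in $K$.

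First I would compute $[P,Q]_\circ$ directly from Definition \ref{d:lexic}; a short case check yields
$$[P,Q]_\circ = \{(a,z): z \geq x\}\ \cup\ \{(c,z): [a,c,b]\ \text{in}\ K,\ z\in L\}\ \cup\ \{(b,z): z\leq y\}.$$
Let $Y:=([a,b]_\circ,\leq_a)$ be the c-interval of $K$ carrying the cut order $\leq_a$ of Remark \ref{r:chech}, for which $a$ is least and $b$ greatest (Lemma \ref{l:cut1}.1). Since $[a,b]_\circ$ is closed in $(K,\tau_\circ)$ and $K$ is compact, Lemma \ref{l:cut}.2 shows $Y$ is a compact linearly ordered space; $L$ is one by hypothesis. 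Hence Fact \ref{f:Kem} makes the linear lexicographic product $Y\otimes_l L$ a compact linearly ordered space, and its closed interval $[(a,x),(b,y)]$ is compact. The displayed formula shows that, as a subset of $K\times L$, this closed interval is exactly $[P,Q]_\circ$.

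It remains to match the topologies. By Lemma \ref{l:cut}.2 applied to the c-order of $K\otimes_c L$ with the cut $\leq_P$, the subspace $\tau_\circ$-topology on the convex set $[P,Q]_\circ$ coincides with the interval topology of $\leq_P$; and on $[P,Q]_\circ$ the order $\leq_P$ agrees with the restriction of the lexicographic order of $Y\otimes_l L$, both being linear orders refining the circular order with least element $P$ (the global least element $(a,\min L)$ of $Y\otimes_l L$ lies strictly below $P$ and plays no role). Thus the subspace circular topology of $[P,Q]_\circ$ equals the order topology of the compact interval $[(a,x),(b,y)]$, so $[P,Q]_\circ$ is compact; the same argument with $P,Q$ interchanged (now using $[b,a]_\circ$) gives $[Q,P]_\circ$ compact, and the two cover $K\otimes_c L$. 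The only genuinely delicate point is this order-isomorphism, in particular verifying that $\leq_P$ and the lexicographic order coincide on both the $a$-fiber and the $b$-fiber of $[P,Q]_\circ$; I would settle it by the case analysis of Definition \ref{d:lexic} together with Lemma \ref{l:cut1}.1.
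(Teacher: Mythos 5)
Your proof is correct and follows essentially the same route as the paper's: both reduce compactness of the circular topology via Theorem \ref{t:c-comp} to compactness of closed intervals $[P,Q]_\circ$, and both identify such an interval with a closed interval of the compact linear lexicographic product $[a,b]_{\leq_a}\otimes_l L$ given by Fact \ref{f:Kem}. Your version merely spells out two points the paper leaves implicit (the coincidence of the subspace circular topology with the order topology via Lemma \ref{l:cut}.2, and that a single covering pair $[P,Q]_\circ\cup[Q,P]_\circ$ suffices), which is fine.
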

\begin{proof} Let $u \leq v$ in $L$ and $a,b \in K$.  By Theorem \ref{t:c-comp}, we have to show that the interval  $[(a,u),(b,v)]_{\circ}$ of $K \otimes_c L$ is compact. 
%If $a=b$ then $[(a,u),(b,v)]_{\circ}=[u,v]_{\leq}$ which is compact (being a closed subset) in $L$. Therefore, we can suppose that $a \neq b$. 

Consider the linearly ordered set $[a,b]_{\leq_a}$. It is compact by Theorem  \ref{t:c-comp} (since $K$ is compact) in its circular or linear topologies. Observe that $[(a,u),(b,v)]_{\circ}=[(a,u),(b,v)]_{\leq}$, where $\leq$ is the linear order inherited from the lexicographic linearly ordered product $Y:=([a,b]_{\leq_a} \otimes_l L$. The latter is compact by 
Fact \ref{f:Kem}. Then $[(a,u),(b,v)]_{\circ}$ is also compact (being a closed subset in $Y$). 
\end{proof}

\sk 
%\subsection{Inverse limits and circular order} 
\section{Ordered compactifications via inverse limits}

A \textit{compactification} of a topological space $X$ is a 
continuous dense map $j \colon X \to Y$ such that $Y$ is compact and Hausdorff. \textit{Proper compactification} will mean that $j$ is a topological embedding. 
We say an \textit{order compactification} if $X$ and $Y$ are linearly ordered topological spaces and $j$ is order preserving. \textit{Proper order compactification} will mean that the  compactification $j \colon X \to Y$ is both topological and order embedding.

%Now we recall two technical results about inverse limits and circular orders.  
%%%%%  The same is true for (semi)linear orders and betweenness relations.   

\begin{lem} \label{invLimCirc} \cite{GM-int}
Let $X_{\infty}:=\underleftarrow{\lim} (X_i,I)$ be the inverse limit of the inverse system 
$$\{f_{ij} \colon X_i \to X_j, \ \ i < j, \ i,j \in I\}$$ where $(I,<)$ 
is a directed partially ordered set. Suppose that every $X_i$ is a c-ordered set with the c-order $R_i \subset X_i^3$ and each bonding map $f_{ij}$ is
%2608  continuous 
c-order preserving. On the inverse limit $X_{\infty}$, define a ternary relation $R$ as follows. 
An ordered triple $(a,b,c) \in X_{\infty}^3$ belongs to $R$ iff $[p_i(a),p_i(b),p_i(c)]$ is in $R_i$ for some $i \in I$.  
\begin{enumerate}
\item Then $R$ is a c-order on $X_{\infty}$ and each projection 
map $p_i \colon X_{\infty} \to X_i$ is c-order preserving.
\item \label{invLimCircTop} Assume in addition that every $X_i$ is a compact c-ordered 
space and each bonding map $f_{ij}$ is continuous.  
Then the topological inverse limit $X_{\infty}$ is also a c-ordered 
(nonempty) 
compact space.
\end{enumerate}

\end{lem}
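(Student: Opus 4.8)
I would treat the two assertions separately. For assertion (1) the crux is that $R$ is defined by an \emph{existential} clause (``for some $i$''), so the real work is to show this definition is \emph{coherent} across the levels of the system and then that it reproduces the four axioms of Definition \ref{newC}. I would isolate two preliminary facts about the projections $p_i$. First, a \emph{separation} fact: for a finite $F\subseteq X_{\infty}$, call an index $i$ separating for $F$ if $p_i$ is injective on $F$; since for indices $i$ and $k$ with $k$ beyond $i$ one has $p_i=f\circ p_k$ for the appropriate bonding map $f$, the separating indices form a cofinal up-set, and by directedness of $(I,<)$ they are nonempty whenever the points of $F$ are pairwise distinct. Second, a \emph{coherence} fact: if $a,b,c$ are pairwise distinct and $i,j$ are both separating for $\{a,b,c\}$, then $[p_i(a),p_i(b),p_i(c)]$ in $R_i$ holds iff $[p_j(a),p_j(b),p_j(c)]$ in $R_j$ holds. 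To prove coherence I would pass to a common index $k$ beyond $i$ and $j$ (directedness), note $k$ is again separating, fix the orientation at $k$ by totality of $R_k$, and push it down to $i$ and $j$ using that the bonding maps are COP: their condition (1) carries a triple with distinct images to a triple in the target relation. Hence the orientation of a distinct triple is level-independent.

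With these facts the axioms are routine. First, $[a,b,c]\in R$ forces $a,b,c$ distinct by the remark following Definition \ref{newC}, applied at the witnessing level. Cyclicity is immediate from cyclicity of a single $R_i$. For totality on distinct $a,b,c$, pick a separating $k$ and invoke totality of $R_k$. For asymmetry and transitivity I would collect the finitely many points involved, pass to one separating index $k$ for all of them, transport every assumed relation there by coherence, apply the corresponding axiom of $R_k$, and read the conclusion back at level $k$ (in transitivity, $[a,b,c]\wedge[a,c,d]$ forces $b\neq d$, so $a,b,d$ are genuinely distinct and a separating index exists). That each $p_i$ is COP then follows: clause (1) of Definition \ref{d:c-ordMaps} is exactly the coherence fact, and for clause (2) I would take $a\neq b$ in a fibre $p_i^{-1}(c_0)$, choose $m$ beyond $i$ with $p_m(a)\neq p_m(b)$, observe that $p_m(a),p_m(b)$ lie in the fibre $f^{-1}(c_0)$ of the COP bonding map $f\colon$ (at level $m$)$\,\to X_i$, which is convex; thus one $X_m$-interval between them lies in $f^{-1}(c_0)$, and clause (1) for $p_m$ shows the corresponding interval between $a$ and $b$ maps into $p_i^{-1}(c_0)$, giving convexity.

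For assertion (2) the efficient route is a topology comparison rather than a direct appeal to Theorem \ref{t:c-comp}. Write $\tau_R$ for the circular topology of $R$ and $\tau_{\lim}$ for the inverse-limit (product subspace) topology. I would first show $\tau_R\subseteq\tau_{\lim}$ by expressing each basic interval as a union of $\tau_{\lim}$-open sets,
$$(a,b)_R=\bigcup_{\,i:\,p_i(a)\neq p_i(b)} p_i^{-1}\big((p_i(a),p_i(b))_{\circ}\big),$$
which follows directly from the definition of $R$ and continuity of the $p_i$. Since each $X_i$ is compact and the circular topologies are Hausdorff (Proposition \ref{Hausdorff}.3), $\prod_i X_i$ is compact Hausdorff and $X_{\infty}$ is closed in it (the usual closedness of inverse limits under continuous bonding maps), so $(X_{\infty},\tau_{\lim})$ is compact and nonempty (the standard nonemptiness of inverse limits of nonempty compact Hausdorff factors). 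As $\tau_R$ is Hausdorff (Proposition \ref{Hausdorff}.3), the identity $(X_{\infty},\tau_{\lim})\to(X_{\infty},\tau_R)$ is a continuous bijection from a compact space to a Hausdorff space, hence a homeomorphism; therefore $\tau_R=\tau_{\lim}$ is compact. This simultaneously yields compactness of the circular topology and identifies it with the inverse-limit topology.

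The step I expect to be the main obstacle is the coherence fact in assertion (1): reconciling the per-level orientations into a single consistent ternary relation, since a triple may be separated at some indices yet collapse at others, and COP bonding maps preserve orientation only on triples with \emph{distinct} images. Everything else — the four axioms, COP of the projections, and the compactness argument — reduces to this coherence plus the elementary separation/directedness bookkeeping. In assertion (2) the only point needing care is establishing $\tau_R\subseteq\tau_{\lim}$ cleanly, after which the compact-to-Hausdorff identity-map trick disposes of the harder reverse inclusion entirely.
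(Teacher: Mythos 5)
The paper itself gives no proof of this lemma; it is quoted from \cite{GM-int}, so there is nothing internal to compare against. Judged on its own terms, your argument is correct and complete in outline. The two devices you isolate are exactly what is needed: the separation/coherence bookkeeping shows that any index witnessing $[p_i(a),p_i(b),p_i(c)]$ is automatically separating for $\{a,b,c\}$ and that all separating indices assign the same orientation (push the orientation down from a common upper bound via COP clause (1)), so the existentially defined $R$ is unambiguous and the four axioms transfer from a single level; your treatment of COP clause (2) for the projections, via convexity of the fibres $f^{-1}(c_0)$ of the bonding maps, is also right. For assertion (2), the identity $(a,b)_R=\bigcup_{i:\,p_i(a)\neq p_i(b)} p_i^{-1}\bigl((p_i(a),p_i(b))_{\circ}\bigr)$ gives $\tau_R\subseteq\tau_{\lim}$, and the continuous-bijection-from-compact-to-Hausdorff argument (using Proposition \ref{Hausdorff}.3 for Hausdorffness of $\tau_R$) then forces the two topologies to coincide; this is cleaner than invoking Theorem \ref{t:c-comp} and simultaneously yields compactness and the identification of the circular topology with the inverse-limit topology. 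The only cosmetic caveat is the paper's labelling of the bonding maps ($f_{ij}\colon X_i\to X_j$ for $i<j$), which you should align with your phrase ``$k$ beyond $i$''; this does not affect the substance.
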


\sk

A particular case of the following construction and its proof (with minor changes) can be found in \cite{GM-int}. 
%, where it plays a major role. 

\begin{thm} \label{t:lim} 
Let $(X,R)$ be a c-ordered set and $G$ is a subgroup of $\Aut(X)$ with the pointwise topology. 
Then there exist: a c-ordered compact zero-dimensional space $X_{\infty}$ such that 
\ben 
\item $X_{\infty}=\underleftarrow{\lim} (X_F, I)$ is the inverse limit of finite c-ordered sets $X_F$, where $F \in I=P_{fin}(X)$. 

\item $X_{\infty}$ is a compact c-ordered $G$-space and $\nu \colon X \to X_{\infty}$ is a dense topological $G$-embedding of the discrete set $X$ such that $\nu$ is a c-order-preserving map.  

\item If $X$ is countable then $X_{\infty}$ is a metrizable compact space. 
\een 
\end{thm}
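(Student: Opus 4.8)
The plan is to construct the inverse system explicitly from the finite subsets of $X$, and then apply Lemma \ref{invLimCirc} (which already handles the c-order structure and compactness of the inverse limit) together with a verification that the $G$-action and the embedding $\nu$ behave well. First I would set $I = P_{fin}(X)$, the collection of all finite subsets of $X$, directed by inclusion. For each $F \in I$, I would define $X_F$ to be the finite cycle obtained by taking the elements of $F$ with the induced c-order from $(X,R)$; concretely $X_F$ is isomorphic to $C_{|F|}$ via the restriction of $R$ to $F^3$. For $F \subset F'$ the bonding map $f_{F'F} \colon X_{F'} \to X_F$ is the natural COP retraction that collapses each point of $X_{F'} \setminus F$ to its predecessor (or a designated neighbor) in the cycle $F$; the point is that collapsing adjacent elements of a finite cycle onto a chosen representative is c-order preserving in the sense of Definition \ref{d:c-ordMaps}, since the preimage of each point is a convex arc. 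I would check that these maps are compatible ($f_{F''F} = f_{F'F}\circ f_{F''F'}$) so that we genuinely have an inverse system of finite (hence compact, zero-dimensional, discrete) c-ordered sets.

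Next I would invoke Lemma \ref{invLimCirc}: part (1) gives that $X_\infty = \underleftarrow{\lim}(X_F,I)$ carries a canonical c-order $R$ with all projections $p_F$ being COP, and part (2) (since each $X_F$ is finite discrete compact and the bonding maps are trivially continuous) gives that $X_\infty$ is a nonempty compact c-ordered space. Being an inverse limit of finite discrete spaces, it is automatically zero-dimensional, which settles item (1) of the theorem. For item (3), if $X$ is countable then $I = P_{fin}(X)$ is countable, so $X_\infty$ is a closed subspace of the countable product $\prod_F X_F$ of finite discrete spaces, hence metrizable.

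The substantive part is item (2), the $G$-action and the embedding. For the embedding I would define $\nu \colon X \to X_\infty$ by sending $x$ to the thread $(p_F(x))_F$ where, for each $F$, the $F$-coordinate records the position of $x$ within the cycle $F$ (using the predecessor-collapse to assign $x \notin F$ to its appropriate representative in $F$). Compatibility of these coordinates under the bonding maps must be checked so that $\nu(x)$ is a genuine element of the inverse limit. That $\nu$ is COP follows because each coordinate map is COP and $R$ on $X_\infty$ is defined coordinatewise in the sense of Lemma \ref{invLimCirc}. Density of $\nu(X)$ I would argue by noting that a basic open set of $X_\infty$ is determined by finitely many coordinates, and any such constraint is met by some actual point of $X$ once $F$ contains the relevant elements; injectivity and the fact that $\nu$ is a topological embedding of the \emph{discrete} $X$ follow since any two distinct points of $X$ are separated already at the coordinate $F = \{x,y\}$ (or a slightly larger $F$ to have a legitimate cycle), giving a clopen separation.

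For the $G$-action, the key observation is that $G \subset \Aut(X)$ permutes $I = P_{fin}(X)$ by $g\cdot F = gF$, and each $g$ induces a c-order \emph{isomorphism} $X_F \to X_{gF}$; because $g$ is COP and commutes with the predecessor-collapse, these isomorphisms intertwine the bonding maps, so $g$ induces a homeomorphism $\tilde g \colon X_\infty \to X_\infty$ that is COP, and $\nu$ is then $G$-equivariant by construction. The main obstacle I anticipate is the bookkeeping around the bonding maps: one must define the collapse $f_{F'F}$ so that it is genuinely COP (the convexity condition (2) of Definition \ref{d:c-ordMaps} is the delicate one, since a badly chosen collapse could send a point to a non-adjacent representative and violate convexity of the fibers) and simultaneously $G$-compatible, so that the induced maps on the inverse limit assemble into a continuous action with $\nu$ equivariant. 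Getting a single coherent choice of representatives that is stable under the $G$-action — rather than making arbitrary independent choices on each cycle — is where the care is needed; a clean way is to fix, for each finite cycle $F$, the collapse determined purely by the cyclic adjacency structure (sending each deleted point to its immediate $R$-successor inside $F$), which is manifestly natural and hence automatically commutes with the action of any $g \in \Aut(X)$.
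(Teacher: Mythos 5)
Your overall strategy is the same as the paper's: index by finite subsets (cycles) of $X$, build an inverse system of finite c-ordered quotients with COP bonding maps, apply Lemma \ref{invLimCirc} for the c-order and compactness, and let $G$ act by permuting the indices. However, your choice of $X_F$ is too coarse, and this breaks the one genuinely delicate clause of item (2): that $\nu$ is a topological embedding of the \emph{discrete} set $X$. If $X_F$ is the $|F|$-element cycle $F$ itself and each $x\notin F$ is collapsed to its predecessor $t_i$ in $F$, then the fibre of $t_i$ under $p_F$ is the half-open arc $[t_i,t_{i+1})_\circ$, not the singleton $\{t_i\}$. A basic neighbourhood of $\nu(x)$ in $X_\infty$, traced on $\nu(X)$, is then a finite intersection of arcs of the form $[x,s)_\circ$, which is again an arc $[x,s)_\circ$; this equals $\{x\}$ only when some chosen $s$ is an immediate successor of $x$. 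For a densely c-ordered $X$ (e.g.\ $\Q/\Z$) no finite intersection isolates $x$, so $\nu(X)$ is not discrete in the subspace topology and $\nu$ is not an embedding of the discrete space $X$. Your observation that the coordinate $F=\{x,y\}$ ``separates'' $x$ and $y$ gives injectivity and Hausdorff separation of the image, but not openness of singletons.

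The fix --- and this is exactly what the paper does --- is to take $X_F$ to be the quotient of $X$ by the finer partition $cov_F=\{t_1,(t_1,t_2)_\circ,t_2,\dots,t_m,(t_m,t_1)_\circ\}$: each cycle point $t_i$ is its own class, and each nonempty open arc between consecutive cycle points is a separate class (so $|X_F|\le 2m$ rather than $m$). Then $\pi_F^{-1}(t_i)=\{t_i\}$, so the single coordinate indexed by any cycle through $x$ already isolates $\nu(x)$ in $\nu(X)$, while the nonempty arc-classes still guarantee density. Everything else in your argument (compatibility of the collapses, naturality under the $\Aut(X)$-action, the application of Lemma \ref{invLimCirc}, zero-dimensionality, and metrizability for countable $X$) goes through essentially unchanged with this finer quotient; the paper organizes it as a precompact uniformity on $X$ with basis $\{cov_F\}$ whose completion is $X_\infty$, which is an equivalent packaging of the same inverse limit.
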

\begin{proof}  
Let $F:=\{t_1,t_2, \cdots, t_m\}$ be an $m$-cycle on $X$. That is, a c-order-preserving injective map 
$F \colon C_m \to X$, where $t_i=F(i)$ and $C_m:=\{1,2, \cdots, m\}$ with the natural circular order.  We have a natural equivalence ``modulo-$m$" between $m$-cycles (with the same support). 

For every given cycle $F:=\{t_1,t_2, \cdots, t_m\}$,  
define the corresponding finite disjoint covering $cov_F$ of $X$, by adding to the list: all %2608  cycle 
points $t_i$ and nonempty intervals $(t_i,t_{i+1})_o$ between the cycle points. More precisely, we consider the following disjoint cover which can be thought of as an equivalence relation on $X$  
$$
cov_F:=\{t_1, (t_1,t_2)_o, t_2, (t_2,t_3)_o, \cdots, t_m, (t_m,t_1)_o \}. 
$$ 
Moreover, $cov_F$ naturally defines also a finite c-ordered set $X_F$ by ``gluing the points" of the nonempty interval $(t_i,t_{i+1})_o$ for each $i$. 
So, the c-ordered set $X_F$ is the factor-set of the equivalence relation $cov_F$ and it contains at most $2m$ elements. 
In the extremal case of $m=1$ (that is, for $F=\{t_1\}$), we define $cov_F:=\{t_1, X \setminus\{t_1\}\}$. 

We have 
the following canonical c-order preserving 
%2608    map 
onto
map 

\begin{equation} \label{projection} 
\pi_F \colon X \to X_F, \  \ 
\pi_F(x) =
\begin{cases}
	t_i & {\text{for}} \ x=t_i\\
	(t_i,t_{i+1})_o & {\text{for}} \ x \in (t_i,t_{i+1})_o. 
\end{cases}
\end{equation}

\begin{lem} \label{Claim1}
The family 
$
\{cov_F\}
$ 
where $F$ runs over all finite injective cycles $$F \colon \{1,2,\cdots,m\} \to X$$ on $X$ is a basis of a natural precompact  uniformity $\mu_X$ of $X$.
\end{lem}
\begin{proof} 
Let $Cycl(X)$ be the set of all finite injective cycles.  
Every finite $m$-element subset $A \subset X$ defines a cycle $F_A \colon \{1,\cdots,m\} \to X$  (perhaps after some reordering) which is uniquely defined up to the natural cyclic equivalence and the image of $F_A$ is $A$. 

%2608  It 
$Cycl(X)$ 
is a poset if we define $F_1 \leq F_2$ whenever $F_1 \colon C_{m_1} \to X$ is a \emph{sub-cycle} of $F_2 \colon C_{m_2} \to X$. This means that $m_1 \leq m_2$ and $F_1(C_{m_1}) \subseteq F_2(C_{m_2})$.   
This partial order is directed. Indeed, for $F_1,F_2$, we can consider $F_3=F_1 \bigsqcup F_2$ whose support is the union of the supports of $F_1$ and $F_2$. 

For every $F \in Cycl(X)$, we have the disjoint finite 
%2608  covering $cov_F \in U_r$ of $X$ which is right uniformly continuous. 
$\mu_X$-uniform covering $cov_F$ of $X$. 
As before, we can look at $cov_F$ as a c-ordered (finite) set 
$X_F$. Also, as in equation \ref{projection}, we have a c-order-preserving natural map $\pi_F \colon X \to X_F$ which is uniformly continuous into the finite (discrete) uniform space $X_F$. 
Moreover, 
if $F_1 \leq F_2$ then $cov_{F_1} \subseteq cov_{F_2}$. This implies that the equivalence relation $cov_{F_2}$ is sharper than $cov_{F_1}$.  
We have a c-order-preserving (continuous) onto bonding map $f_{F_2,F_1} \colon X_{F_2} \to X_{F_1}$ between finite c-ordered sets. Moreover, $f_{F_2,F_1} \circ \pi_{F_2}=\pi_{F_2}$. 
%%% So the following diagram is continuous

In this way, we get an inverse system 
$$
f_{F_2,F_1} \colon X_{F_2} \to X_{F_1}, \ \ F_1 \leq F_2 
$$
where $(I,\leq)=Cycl(X)$ is the directed poset defined above. 
It is easy to see that $f_{F,F}=id$ and $f_{F_3,F_1}=f_{F_2,F_1} \circ F_{F_3,F_2}$.

Denote by $$X_{\infty}:=\underleftarrow{\lim} (X_F, \ I)  \subset \prod_{F \in I} X_F$$ the corresponding inverse limit. 
%2608  It carries a c-order $R$ as in Lemma \ref{invLimCirc}. 
Its typical element is 
$\{(x_F) : F \in Cycl(X)\} \in X_{\infty}$, where $x_F \in X_F$.  The set 
$X_{\infty}$ carries a circular order as in Lemma \ref{invLimCirc}. 

\sk 
\textbf{Definition of the action} $G \times X_{\infty} \to X_{\infty}$.

For every given $g \in G$ (it is c-order preserving on $X$), we have the induced isomorphism $X_F \to X_{gF}$ of c-ordered sets, where $t_i \mapsto gt_i$ and $(t_i,t_{i+1})_o \mapsto (gt_i,gt_{i+1})_o$ for every $t_i \in cov_F$. 
For every 
%2608  
$F_1 \leq F_2$, we have $f_{F_1,F_2} (x_{F_2})=x_{F_1}$. This implies that $f_{gF_1,gF_2} (x_{gF_2})=x_{gF_1}$. So, $(gx_F)=(x_{gF}) \in X_{\infty}$. 

Therefore, $g \colon X \to X$ can be extended canonically to a %homeomorphism 
map 
$$
g_{\infty} \colon X_{\infty} \to X_{\infty}, \ \ g_{\infty} (x_F) := (x_{gF}).
$$ 
\nt This map is a c-order automorphism. Indeed, if $[x,y,z]$ in $X_{\infty}$ then there exists $F \in I$ such that $[x_F,y_F,z_F]$ in $X_F$. Since $g \colon X \to X$ is a c-order automorphism, we obtain that $[gx_{F},gy_{F},gz_{F}]$ in $X_{gF}$. 

For every cycle $F$ in $X$ and the stabilizer subgroup $St(F) \subset G$ we have 
$gx$ and $x$ are in the same element of the basic disjoint covering $cov_F$. It follows easily from this fact that the action $G \times X_{\infty} \to X_{\infty}$ 
is continuous. Here $X_{\infty}$ carries the compact topology of the inverse limit as a closed subset of the topological product $\prod_{F \in I} X_F$ of finite discrete spaces $X_F$.  
\end{proof}

\begin{lem} \label{l:Limit} 
$X_{\infty}$ is a compact c-ordered $G$-space and $i\colon X \to X_{\infty}$ is a dense c-order-preserving embedding with discrete $i(X)$.  
Furthermore, if $X$ is countable then $X_{\infty}$ is a metrizable compact. 
\end{lem}

\begin{proof}  $(X,\mu_X)$ can be treated as the weak uniformity with respect to the family of maps $\{\pi_F\colon X \to X_F: \ F \in Cycl(X)\}$ (into the finite uniform spaces $X_F$). The corresponding topology of $i(X)$ is discrete. 

Observe that $f_{F_2,F_1} \circ \pi_{F_2}=\pi_{F_1}$ for every $F_1 \leq F_2$.  By the universal property of the inverse limit, we have the canonical uniformly continuous map $\pi_{\infty}\colon X \to X_{\infty}$. It is easy to see that it is an embedding of uniform spaces and 
that $\pi_{\infty}(X)$ is dense in $X_{\infty}$. Since $X$ is a precompact uniform space, we obtain that its uniform completion is a compact space and can be identified with $X_{\infty}$. The uniform embedding $X \to X_{\infty}$ is a $G$-map. It follows that the uniform isomorphism $\widehat{X} \to X_{\infty}$ is also a $G$-map. 

On the other hand, this inverse limit $X_{\infty}$ is c-ordered as follows from Lemmas \ref{invLimCirc} and \ref{invLimCircTop}.  Furthermore, as we have
already mentioned the action of $G$ on $X_{\infty}$ is c-order preserving. 
Therefore, $X_\infty$ is a compact c-ordered $G$-system. 
\sk

This proves (1) and (2). In order to prove (3), observe that if $X$ is countable then we have countable many cycles $F$. Therefore, $X_{\infty}$ is metrizable. 
\end{proof}

This completes the proof of Theorem \ref{t:lim}. 
\end{proof}

\sk \sk 
\begin{remark} \label{r:M(G)} 
Theorem \ref{t:lim} was used recently in \cite{GM-int} to prove that 
for cyclically ultrahomogeneous $G$-spaces $X$ 
the c-ordered compact $G$-space $X_{\infty} \setminus X$ is the \textit{universal minimal $G$-system} $M(G)$. In particular, for $X=(\Q,\circ)$, rationals on the circle (in fact, for $\Q/\Z$) and the group $G=\Aut(\Q,\circ)$ with its pointwise topology, the corresponding c-ordered compactum $M(G)=X_{\infty} \setminus X$ looks as the circle after splitting all rational points. 
\end{remark}

\sk 
%	A similar construction can be used for linearly ordered sets replacing finite cycles by finite chains.  	
%	\een

%%%% Examples ??
%\begin{example}
%	Let $X=\Z$ be the integers with its usual order. Then $X_{\infty}$ is 
%	the two-point compactification.
%\end{example}

\begin{thm} \label{t:limLin} 
Let $(X,\leq)$ be a linearly ordered set and $G$ be a subgroup of $\Aut(X)$ with the pointwise topology. 
Then there exists a linearly ordered compact zero-dimensional space $X_{\infty}$ such that 
\ben 
\item $X_{\infty}=\underleftarrow{\lim} (X_F, I)$ is the inverse limit of finite linearly ordered sets $X_F$, where $F \in I=P_{fin}(X)$. 

\item $X_{\infty}$ is a compact linearly ordered $G$-space and 
$\nu \colon X \hookrightarrow X_{\infty}$ is a dense topological $G$-embedding of the discrete set $X$ such that $\nu$ is a LOP map.  
\item If, in addition, $\leq_G$ is a linear order on $G$ such that orbit maps 
$\tilde{x} \colon G \to X$ ($x \in X$) are LOP then all orbit maps $\tilde{a} \colon G \to X_{\infty}$ ($a \in X_{\infty}$) are LOP. 
\item If $X$ is countable then $X_{\infty}$ is a metrizable compact space. 
\een 
\end{thm}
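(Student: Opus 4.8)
The plan is to run the linear counterpart of the construction in Theorem \ref{t:lim}, replacing finite cycles by finite chains. For a finite subset $F=\{t_1<t_2<\cdots<t_m\}$ of $X$ I would associate the finite disjoint covering
$$
cov_F:=\{(\leftarrow,t_1),\ \{t_1\},\ (t_1,t_2),\ \{t_2\},\ \cdots,\ \{t_m\},\ (t_m,\rightarrow)\}
$$
(discarding empty rays and intervals), viewed as a finite linearly ordered set $X_F$ of at most $2m+1$ elements, together with the obvious LOP quotient $\pi_F\colon X\to X_F$. Inclusion makes $I=P_{fin}(X)$ a directed poset; refinement of coverings gives surjective LOP bonding maps $f_{F_2F_1}\colon X_{F_2}\to X_{F_1}$ for $F_1\subseteq F_2$ with $f_{F_2F_1}\circ\pi_{F_2}=\pi_{F_1}$, and I set $X_{\infty}:=\underleftarrow{\lim}(X_F,I)$. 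As in Theorem \ref{t:lim} (cf. Lemma \ref{Claim1}) the family $\{cov_F\}$ is a basis of a precompact uniformity on $X$ whose completion is $X_{\infty}$; this yields at once that $\nu\colon X\to X_{\infty}$ is a dense embedding with discrete image (the block $\{t\}$ already isolates $\nu(t)$ via $F=\{t\}$) and, when $X$ is countable, that $X_{\infty}$ is metrizable, since then $I$ is countable. This settles (1), (4), and the topological content of (2).

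The $G$-action and its order-preservation are obtained exactly as in Theorem \ref{t:lim}: each $g\in G\subseteq\Aut(X,\leq)$ carries $F$ to $gF$ and induces an order isomorphism $X_F\to X_{gF}$, so $g_{\infty}(x_F):=(x_{gF})$ defines a continuous LOP $G$-action on $X_{\infty}$ for which $\nu$ is equivariant. The one structural point genuinely new relative to the circular case is a linear analog of Lemma \ref{invLimCirc} (a variant is already in \cite{GM-int}): the inverse-limit order on $X_{\infty}$ — declare $a<b$ iff $a_F<b_F$ for one (equivalently every) $F$ with $a_F\neq b_F$ — is a well-defined total order whose interval topology coincides with the inverse-limit topology. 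I would verify this by checking that every subset of $(X_{\infty},\leq)$ has a supremum, given coordinatewise by $(\sup A)_F=\max\{a_F:a\in A\}$ (coherent because order-preserving maps between finite chains commute with maxima, and with $\sup\emptyset$ the coherent family of minima). By Fact \ref{f:LinComp} the interval topology is then compact, and since it is clearly coarser than the compact Hausdorff inverse-limit topology, the identity is a homeomorphism. This makes $X_{\infty}$ a compact linearly ordered $G$-space and completes (2).

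For the new assertion (3), fix $g_1\leq_G g_2$ and consider
$$
A:=\{a\in X_{\infty}:\ g_1a\leq g_2a\}.
$$
Because the orbit maps on $X$ are LOP and $\nu$ is an equivariant LOP embedding, we have $g_1\nu(x)=\nu(g_1x)\leq\nu(g_2x)=g_2\nu(x)$, so $A\supseteq\nu(X)$. The map $a\mapsto(g_1a,g_2a)$ is continuous since the action is continuous, and the order $\leq$ is closed in $X_{\infty}\times X_{\infty}$ by Lemma \ref{l:LinOrdClosed}(1); hence $A$ is closed. A closed set containing the dense set $\nu(X)$ is all of $X_{\infty}$, so $g_1a\leq g_2a$ for every $a$, i.e. every orbit map $\tilde a\colon G\to X_{\infty}$ is LOP.

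I expect the only real obstacle to be that linear analog of Lemma \ref{invLimCirc}, namely confirming that the interval topology of the inverse-limit order agrees with the inverse-limit topology; everything else is a faithful transcription of the circular construction, and the new item (3) reduces to the soft closed-order-plus-density argument above. Reducing the linear case to the already proven circular Theorem \ref{t:lim} via the standard circular order $\circ_{\leq}$ of Remark \ref{r:chech} looks tempting but is awkward, since passing to $\circ_{\leq}$ merges the two end-rays $(\leftarrow,t_1)$ and $(t_m,\rightarrow)$ into a single circular interval; the direct construction is cleaner.
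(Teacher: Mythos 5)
Your proposal is correct and follows essentially the same route as the paper: the paper likewise replaces cycles by finite chains, uses the covering with the two end-rays adjoined, and proves (3) by exactly your closed-order-plus-density argument (phrased with nets) via Lemma \ref{l:LinOrdClosed}(1). Your explicit verification that the inverse-limit order has all suprema (hence a compact interval topology agreeing with the inverse-limit topology) is a detail the paper leaves implicit in its appeal to Theorem \ref{t:lim}, and it checks out.
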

\begin{proof}
The assertions (1) and (2) can be proved similar to Theorem \ref{t:lim}. 
%	One may assume that $X$ has the smallest and the greatest elements $a:=Min, b:=Max$ which are $G$-fixed. 
We consider finite chains $F:=\{t_1,t_2, \cdots, t_m\}$ (instead of cycles). The corresponding covering 
$cov_F$ has the form 
$$
cov_F:=\{(-\infty,t_1), t_1, (t_1,t_2), t_2, (t_2,t_3), \cdots, (t_{m-1}, t_m), , t_m, (t_m,\infty)\}, 
$$ 
where we remove all possible empty intervals. 

(3)  Since $\nu(X)$ is dense in $X_{\infty}$, there exists a net $x_{\gamma}$, $\gamma \in \Gamma$ in $X=\nu(X) \subset X_{\infty}$ such that $a= \lim x_{\gamma}$ in $X_{\infty}$. If $g_1 \leq_G g_2$ in $G$ then $g_1x_{\gamma} \leq g_2x_{\gamma}$ in $X$. Hence, also in $\nu(X)$ and $X_{\infty}$. The linear order in every linearly ordered space (in particular, in $X_{\infty}$) is closed (see Lemma \ref{l:LinOrdClosed}.1). Therefore, if $g_1 \leq_G g_2$ in $G$ then we obtain $g_1a= \lim g_1x_{\gamma} \leq \lim g_2x_{\gamma}=g_2a$.  

(4) Similar to Theorem \ref{t:lim}, we have countably many chains $F$ for countable $X$. 
\end{proof}

\sk 
\begin{cor} \label{c:withPointw} 
Let $(X,\leq)$ ($(X,\circ)$) be a linearly (circularly) ordered set and $G$ be a  subgroup of $\Aut(X,\leq)$ ($\Aut(X,\circ)$) with the pointwise topology. Then $G$ is an orderly (c-orderly) topological group.  
\end{cor}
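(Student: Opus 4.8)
The plan is to feed the given ordered $G$-set into the inverse-limit compactification of Theorem~\ref{t:lim} (circular case) or Theorem~\ref{t:limLin} (linear case), and then to prove that the induced homomorphism into $H_+(X_\infty)$ is a topological embedding. I will describe the circular case; the linear one is identical with \emph{chains} in place of \emph{cycles}. So let $\nu\colon X \to X_\infty=\underleftarrow{\lim}\,(X_F,I)$ be the dense c-order-preserving $G$-embedding of the discrete set $X$ supplied by Theorem~\ref{t:lim}, where $X_F$ is the finite c-ordered quotient attached to a finite cycle $F$ via the disjoint cover $cov_F$, and $\pi_F\colon X\to X_F$, $p_F\colon X_\infty\to X_F$ are the canonical c-order-preserving projections. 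Since $X_\infty$ is a compact c-ordered $G$-space, the action furnishes a homomorphism $h\colon G\to H_+(X_\infty)$, $h(g)=g_\infty$, and $h$ is injective: if $g_\infty=\mathrm{id}$ then $\nu(gx)=g_\infty\nu(x)=\nu(x)$ on the dense set $\nu(X)$, so $gx=x$ for all $x$ and $g=e$ by effectiveness of $G\subseteq\Aut(X)$. Everything then reduces to matching the pointwise topology on $G$ with the compact-open topology on $H_+(X_\infty)$, and this is the step I expect to be the main obstacle.

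The reason it is delicate is that compact-open (equivalently, uniform) closeness on $X_\infty$ is a simultaneous \emph{all points at once} condition, whereas pointwise convergence on $X$ is a \emph{point by point} one; the reconciliation has to come from the order. Because $X_\infty$ is the inverse limit of the finite discrete sets $X_F$, a neighbourhood base at the identity of $H(X_\infty)$ in the compact-open topology is given by the sets $W_F:=\{f\in H(X_\infty): p_F\circ f=p_F\}$, i.e.\ the homeomorphisms preserving every cell of $cov_F$. Writing $h_\gamma:=g^{-1}g_\gamma$, convergence $g_\gamma\to g$ in $H_+(X_\infty)$ means: for every finite cycle $F$, eventually $(h_\gamma)_\infty\in W_F$. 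The two order-theoretic facts that drive the proof are: (i) any $w\in\Aut(X)$ fixing each of the finitely many points of a cycle $F=\{t_1,\dots,t_m\}$ automatically preserves every cell of $cov_F$, since it fixes the singletons $\{t_i\}$ and, being c-order preserving, maps each $(t_i,t_{i+1})_{\circ}$ onto $(wt_i,wt_{i+1})_{\circ}=(t_i,t_{i+1})_{\circ}$; and (ii) for a cycle $F$ containing a prescribed point $y$, the cell of $cov_F$ containing $y$ is the singleton $\{y\}$, so $\pi_F(z)=\pi_F(y)$ forces $z=y$, i.e.\ singleton cells detect exact values.

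With these in hand I would verify both inclusions. For continuity of $h$, assume $g_\gamma\to g$ pointwise, so $h_\gamma\to e$ pointwise; fix a cycle $F$ with support $\{t_1,\dots,t_m\}$. Pointwise convergence gives eventually $h_\gamma t_i=t_i$ for all $i$, whence by (i) $\pi_F\circ h_\gamma=\pi_F$. Since $p_F\circ(h_\gamma)_\infty\circ\nu=\pi_F\circ h_\gamma=\pi_F=p_F\circ\nu$ on the dense set $\nu(X)$, and both $p_F\circ(h_\gamma)_\infty$ and $p_F$ are continuous into the discrete set $X_F$, they agree on all of $X_\infty$, i.e.\ $(h_\gamma)_\infty\in W_F$; hence $h(g_\gamma)\to h(g)$. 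For the inverse direction, assume $h(g_\gamma)\to h(g)$, so eventually $(h_\gamma)_\infty\in W_F$ for every $F$; fix $x\in X$ and choose a cycle $F\ni x$, making $\{x\}$ a cell. Eventually $p_F((h_\gamma)_\infty\nu(x))=p_F(\nu(x))=\{x\}$, so $\pi_F(h_\gamma x)=\{x\}$, and by (ii) $h_\gamma x=x$, that is $g_\gamma x=gx$. As $x$ was arbitrary, $g_\gamma\to g$ pointwise. Thus $h$ is a topological embedding of $G$ into $H_+(X_\infty)$ with $X_\infty$ a compact c-ordered space, so $G$ is c-orderly. The linear statement follows in the same way from Theorem~\ref{t:limLin}, replacing cycles by chains and the intervals $(t_i,t_{i+1})_{\circ}$ by the corresponding linear intervals of $cov_F$ (using that a LOP automorphism fixing the chain points preserves each interval between consecutive ones), which yields a topological embedding into $H_+(X_\infty,\leq)$ and hence that $G$ is orderly.
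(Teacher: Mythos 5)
Your proof is correct and follows the same route the paper intends: the corollary is deduced directly from Theorems \ref{t:lim} and \ref{t:limLin}, and your verification that the induced homomorphism $G\to H_+(X_\infty)$ is a topological embedding (matching the pointwise topology on $G$ with the compact-open topology via the partitions $cov_F$ and their singleton cells) is exactly the step the paper leaves implicit. Nothing to correct.
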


\begin{remark} \label{r:Q} 
Let $X=(\Q,\leq)$ be the rationals with the usual order but equipped with the discrete topology. 	Consider the automorphism group $G:=\Aut(\Q,\leq)$ %=\Aut(\Q,\leq)$ 
with the pointwise topology.
One may apply Theorem \ref{t:limLin} getting the linearly ordered $G$-compactification 
$\nu \colon X \hookrightarrow X_{\infty}$ (where $X_{\infty}$ is metrizable and zero-dimensional). This compactification, in this case, has a remarkable property (as we show in \cite{Me-Sm100}). Namely, 
$\nu$ is the \textit{maximal $G$-compactification} for the $G$-space $\Q$ (where $\Q$ is discrete). The same is true for every dense subgroup $G$ of $\Aut(\Q,\leq)$ (e.g.,  for Thompson's group $F$). 
%For more details see \cite{Me-Sm100}.  

Similar result is valid for the circular version. Namely, the rationals on the circle with its circular order
$X=(\Q / \Z,\circ)$, the automorphism group $\Aut(\Q / \Z,\circ)$ and its dense subgroups $G$ (for instance, Thompson's circular group $T$).
\end{remark}

\sk \sk 	
%	\section{Left orderability of groups}  
\section{Linearly ordered groups} \label{s:Left}

%\subsection{Background} 

Linear left orderability is an important property of groups and was  extensively studied in many publications; see for example \cite{CR-b,Ghys,DNR} and references there.

Let $G$ be a group and $\leq$ be a linear order on the set $G$. We say that this order is 
\textit{left invariant} if the left action of $G$ on itself preserves $\leq$. Meaning that 
$x \leq y$ iff $gx \leq gy$ for all $g,x,y \in G$.  
A \textit{right invariant} and \textit{bi-invariant order} can be defined similarly.

%\textbf{Linear orderability of groups}
\sk  
A group $G$ is \textit{left linearly orderable} iff there exists a linear order $\leq$ on $G$ such that the standard left action of $G$ on itself preserves the order. For simplicity we shortly say $G$ is L-Ord. In the case that left and right action both are order preserving with respect to the same order on $G$, we say that $G$ is \textit{orderable}; abbreviation: Ord.

 L-Ord groups are torsion free. This condition is necessary but not sufficient.  
The classes L-Ord and Ord are surprisingly large, \cite{CR-b,DNR}.
%%%%%%%%%%%%%%%%%%%%%%
%It is closed under: subgroups, direct products, free products,
%directed unions, extensions.
%The class of Ord groups is closed under: subgroups, direct products, free products,
%directed unions, but not necessarily under extensions.
%Both properties Ord and L-Ord are local: 
%a group has the property if and only if every finitely-generated subgroup has it. 
Among others, all free groups and all free abelian groups are Ord. 
%$G$ is L-Ord iff it admits an effective linear order-preserving action. 
 
%%%%%%%%%%%%
%\begin{remark} \label{r:LinEx} \ 
%	%	We present here some well known examples of L-Ord and Ord groups. 
%	\ben 
%	\item All free groups and all free abelian groups are Ord. 
%	\item  
%	Residually torsion-free nilpotent groups are Ord.
%	\item All braid groups are L-Ord. Pure braid groups are Ord. 
%	\item There are many (countable) groups which are L-Ord but not Ord. Among others: the Klein Bottle group, braid group on 5 (or more) strands.  
%	\item %\cite{DNR} 
%	$G$ is L-Ord iff it admits an effective linear order-preserving action. 
%	A \textbf{countable} group G is L-Ord iff it acts faithfully on $\R$ by order-preserving homeomorphisms (that is, $G$ algebraically is embedded into $H_+(\R)$). 
%	\een
%\end{remark}

%\begin{lem}
%	Let $G$ effectively acts on a lin.ord. set $X$ by order preserving transformations. Then G is left-orderable. 
%\end{lem}

\sk  
A topological group $G$ is said to be (\textit{Raikov}) \textit{complete} if it is complete with respect to its two-sided uniform structure $\mathcal{U_{LR}}$. 
Recall 
%1208 (s)  
(see, e.g., \cite[Theorem 3.6.10]{AT}) 
 that the completion $r \colon G \hookrightarrow \widehat{G}$ of every topological group $G$ with respect to $\mathcal{U_{LR}}$ naturally admits a structure of a topological group. 
   
  \sk

   Orderly topological groups were defined in Definition \ref{d:orderly} as topological subgroups of $H_+(K)$ for some compact linearly ordered space. 
 %In particular, the Polish group $H_+[0,1]$ is orderly. 
  		For every linearly ordered set $(X,\leq)$ the topological group $\Aut(X,\leq)$ with its pointwise topology with respect to the discrete set $X$ is orderly (Corollary \ref{c:withPointw}).

\begin{prop} \label{p:complLin} 
	The completion of an orderly topological group $(G,\tau)$ is orderly. 
\end{prop}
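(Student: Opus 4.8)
The plan is to realize the Raikov completion $\widehat{G}$ as the closure of $G$ inside the complete group $H_+(K)$, for the very same compact linearly ordered space $K$ that witnesses the orderability of $G$. This mirrors the circular case treated in Corollary \ref{c:subgr+completion}(2), with Lemma \ref{l:LinOrdClosed}.2 playing the role that the circular closedness statement plays there. Concretely, since $G$ is orderly, I first fix a topological embedding $j \colon G \hookrightarrow H_+(K)$, where $K$ is a compact linearly ordered space and $H_+(K)$ carries the compact-open topology, and I identify $G$ with the topological subgroup $j(G)$ of $H_+(K)$.

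Next I would establish that $H_+(K)$ is Raikov complete. By Lemma \ref{l:LinOrdClosed}.2, $H_+(K)$ is a closed subgroup of the homeomorphism group $H(K)$, and a closed subgroup of a Raikov-complete group is again Raikov complete; so it suffices to recall that $H(K)$ is complete. For a compact Hausdorff $K$ this is standard: the space $C(K,K)$ of continuous self-maps, with the uniformity of uniform convergence (which induces the compact-open topology), is complete because $K$ is compact, and the assignment $f \mapsto (f, f^{-1})$ maps $H(K)$ onto $\{(f,g) : f \circ g = g \circ f = \mathrm{id}_K\}$, which is closed in $C(K,K) \times C(K,K)$ and realizes the two-sided uniformity of $H(K)$. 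Hence $H(K)$, and therefore its closed subgroup $H_+(K)$, is complete.

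Having a complete overgroup, I would invoke the uniqueness of the Raikov completion: if a topological group $G$ embeds as a topological subgroup of a complete group $H$, then the closure $\overline{G}$ of $G$ in $H$ is canonically isomorphic to the completion $\widehat{G}$. Applying this with $H = H_+(K)$, the completion $\widehat{G}$ is identified with the closure $\overline{G}$ taken inside $H_+(K)$. As a subgroup of $H_+(K)$ equipped with the subspace topology, $\overline{G}$ is itself a topological subgroup of $H_+(K)$ for the compact linearly ordered space $K$; by Definition \ref{d:orderly} this means that $\widehat{G} \cong \overline{G}$ is orderly, which is the assertion.

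The only nonroutine ingredient is the completeness of $H(K)$ for a general, possibly non-metrizable, compact linearly ordered space $K$; everything else is the standard transfer of completions through a dense topological embedding into a complete group. I would therefore isolate and emphasize that completeness claim — either by citing it or by spelling out the closed-embedding argument sketched above — after which the orderability of $\widehat{G}$ follows immediately.
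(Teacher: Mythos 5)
Your proof follows essentially the same route as the paper: embed $G$ in $H_+(K)$, use Lemma \ref{l:LinOrdClosed}.2 to see that $H_+(K)$ is closed in the complete group $H(K)$ and hence itself complete, and identify $\widehat{G}$ with the closure of $G$ in $H_+(K)$. The only difference is that you spell out the standard completeness of $H(K)$, which the paper simply cites as well known; the argument is correct.
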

\begin{proof}
	Let $(G,\tau)$ be an orderly topological group. Then there exists a compact linearly ordered space $(K,\leq)$ such that $G$ is a topological subgroup of $H_+(K)$. It is well known that the topological group $H(K)$ (of all homeomorphisms) in its standard compact open topology is complete for every compact space $K$. 
	By Lemma \ref{l:LinOrdClosed}.2, $H_+(K)$ is a closed subgroup of $H(K)$. Therefore, $H_+(K)$ is complete, too. Then the closure of $G$ in $H_+(K)$ (which is its completion) is also complete. 
\end{proof}

%
%The class of orderly topological groups clearly is closed under topological subgroups. The following result is less trivial. 
%
%\begin{prop} \label{p:FinProdOrd} 
%	Finite product of orderly topological groups is orderly.
%\end{prop}
%\begin{proof} It is enough to prove for two groups. 
%	Let $G_1$ and $G_2$ be orderly topological groups. By definition, there exist compact linearly ordered spaces $K_1, K_2$ such that $G_i$ is a topological subgroup of $H_+(K_i)$, where $i \in \{1,2\}$. Then we have a natural coordinatewise action of the topological product $G=G_1 \times G_2$ on the lexicographic product 
%	$K=K_1 \otimes_l K_2$ as follows 
%	$$
%	G \times K \to K, \ (g_1,g_2)(x_1,x_2)=(g_1x_1,g_2x_2). 
%	$$
%	This action is order preserving. By Fact \ref{f:Kem}, $K=K_1 \otimes_l K_2$ is compact in its linear order. It is straightforward to see that this action implies a topological embedding $G \hookrightarrow H_+(K)$. 
%	\end{proof}

%a precompact algebraically cyclic dense subgroup of the circle group $\T$, then $G$, as a discrete group, is clearly L-Ord (being the cyclic group $\Z$) but $G$ is not orderly because its completion (that is, $\T$) is not orderly (not being L-Ord). 
%%Note that if $G$ is orderly then the same is true for its Raikov completion. 
%%%%%% Indeed, 

The equivalence of (1) and (3) in Theorem \ref{t:LinCase} is well known.  As well as, the fact that a \textbf{countable} group $G$ is L-Ord iff it faithfully acts on $\R$ by order-preserving homeomorphisms (that is, $G$ algebraically is embedded into $H_+(\R)$), \cite{DNR}. 
%as well as the fact that if $G$ is countable then one may choose $X:=(\R,\leq)$, \cite{DNR}. 

\begin{thm} \label{t:LinCase} 
	Let $G$ be an abstract group. 
	The following are equivalent:
	\ben 
	\item $G$ is L-Ord; 
	\item $(G,\tau_{discr})$ is orderly \nl (i.e., a discrete copy of $G$ topologically is embedded into the topological group $H_+(K)$ for some compact linearly ordered topological space $(K,\leq)$); 
	%\footnote{that is, $(G,\tau_{discr})$ is lin-orderly}.
	%%%%%%%%%%%  In addition we can suppose that $\dim X=0$; 
	\item $G$ algebraically is embedded into the group $\Aut(X,\leq)$ for some linearly ordered set $(X,\leq)$.  
	%admits an effective linear order-preserving action on some linearly ordered set $(X,\leq)$; 
	\een
	
		In (2) we can suppose, in addition, that $\dim K=0$.
\end{thm}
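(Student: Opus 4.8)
<br>

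The plan is to prove Theorem \ref{t:LinCase} by establishing the cycle of implications $(1) \Rightarrow (3) \Rightarrow (2) \Rightarrow (1)$, with the extra zero-dimensionality clause folded into the proof of $(3) \Rightarrow (2)$. The implication $(1) \Rightarrow (3)$ is essentially immediate: if $G$ is L-Ord with left-invariant order $\leq$, then the left regular action of $G$ on the linearly ordered set $(X,\leq) := (G,\leq)$ is by order-preserving bijections, so the associated homomorphism $h \colon G \to \Aut(X,\leq)$ is defined. Since the left action is free (if $gx = x$ for some $x$ then $g = e$), $h$ is injective, giving the desired algebraic embedding into $\Aut(G,\leq)$.

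For $(3) \Rightarrow (2)$, this is where the main work lies, and I would invoke Theorem \ref{t:limLin} directly. Given an algebraic embedding $G \hookrightarrow \Aut(X,\leq)$, equip $\Aut(X,\leq)$ with the pointwise topology relative to the discrete set $X$; by Theorem \ref{t:limLin}(1)--(2) there is a compact zero-dimensional linearly ordered $G$-space $X_\infty = \underleftarrow{\lim}(X_F, I)$ together with a dense topological $G$-embedding $\nu \colon X \hookrightarrow X_\infty$ of the discrete set $X$, with the $G$-action order-preserving and continuous. This action yields a continuous homomorphism $\rho \colon \Aut(X,\leq) \to H_+(X_\infty)$, where $H_+(X_\infty)$ carries the compact open topology. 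The essential point I must verify is that the restriction of $\rho$ to the discrete group $G$ is a topological embedding, i.e. that $\rho$ is injective on $G$ and that the compact-open topology induces the discrete topology on $\rho(G)$. Injectivity follows because $\nu(X)$ is dense and the original $G$-action on the discrete set $X$ is effective: a nontrivial $g \in G$ moves some $x \in X$, hence moves the distinct point $\nu(x) \in X_\infty$, so $\rho(g) \neq \mathrm{id}$. Since $X_\infty$ is zero-dimensional this also establishes the final clause $\dim K = 0$ with $K = X_\infty$.

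The hardest step is verifying that $\rho(G)$ is \emph{discrete} in the compact-open topology on $H_+(X_\infty)$, equivalently that the discrete copy of $G$ embeds topologically (not merely algebraically) into $H_+(X_\infty)$. The natural strategy is to exhibit, for each $g \neq e$ in $G$, a compact-open neighborhood of the identity in $H_+(X_\infty)$ that excludes $\rho(g)$, uniformly enough to isolate the identity. Choosing $x \in X$ with $gx \neq x$, the points $\nu(x)$ and $\nu(gx)$ are distinct in the Hausdorff space $X_\infty$ (Lemma \ref{l:c-is-Open} and the Hausdorffness of the interval topology), so one finds disjoint clopen sets $U \ni \nu(x)$ and $V \ni \nu(gx)$; then $\rho(g)$ maps a point of $U$ into $V$, separating $\rho(g)$ from the identity. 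The subtlety is that this separation must be engineered so that $\{\mathrm{id}\}$ is open in $\rho(G)$, which requires pinning down a single clopen partition (coming from one cycle/chain $F$ in the inverse system) whose stabilizer meets $G$ only in $e$; this is where the structure of $X_\infty$ as an inverse limit of finite ordered sets $X_F$, with the associated finite clopen coverings $\mathrm{cov}_F$, becomes indispensable.

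Finally, $(2) \Rightarrow (1)$ closes the loop. If the discrete copy of $G$ embeds topologically into $H_+(K)$ for a compact linearly ordered $(K,\leq)$, then in particular $G$ embeds algebraically, so it acts effectively and order-preservingly on $(K,\leq)$. To produce a left-invariant linear order on $G$ itself, I would transport the order from $K$ via a free orbit: the delicate point is that a single orbit need not be free nor determine the order globally, so the cleaner route is to fix a well-ordering of $K$ (or of a countable order-dense subset) and lexicographically compare, for each pair $g \neq h$, the first coordinate $x$ at which $gx \neq hx$, declaring $g < h$ iff $gx < hx$ in $K$. Effectiveness guarantees such a distinguishing $x$ exists, and left-invariance is immediate since $(fg)x = f(gx)$ and $f$ preserves $\leq$ on $K$; verifying transitivity and totality of this lexicographic rule is the routine but necessary check. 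This recovers the left order on $G$ and completes the equivalence.
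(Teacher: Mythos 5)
Your cycle $(1)\Rightarrow(3)\Rightarrow(2)\Rightarrow(1)$ differs from the paper's $(1)\Rightarrow(2)\Rightarrow(3)\Rightarrow(1)$, and the difference is not cosmetic: your implication $(3)\Rightarrow(2)$ has a genuine gap at exactly the point you flag as ``the hardest step''. Starting from an arbitrary algebraic embedding $G\hookrightarrow \Aut(X,\leq)$ and passing to $X_\infty$ via Theorem \ref{t:limLin}, the compact-open (equivalently, pointwise) topology on $H_+(X_\infty)$ need \emph{not} induce the discrete topology on $\rho(G)$. What you would need is a single finite chain $F\subset X$ such that the only $g\in G$ carrying each block of $\mathrm{cov}_F$ into itself is $e$; such an $F$ need not exist. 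Concretely, let $X=\bigsqcup_{n\in\N}\Z_n$ be countably many copies of $\Z$ placed one after another, and let $G=\bigoplus_{n}\Z$ act coordinatewise by translations. The action is effective and order-preserving, but every finite chain $F$ misses all but finitely many of the convex blocks $\Z_n$, so the nontrivial translations supported on the missed blocks fix every class of $\mathrm{cov}_F$; hence every basic neighborhood of the identity in $H_+(X_\infty)$ meets $\rho(G)\setminus\{e\}$, and $\rho(G)$ is not discrete. No choice of clopen partition, and no use of points of $X_\infty\setminus\nu(X)$, repairs this. Your $(1)\Rightarrow(3)$ and $(2)\Rightarrow(1)$ steps are fine (the latter is the standard dynamically lexicographic order argument, as in the paper's $(3)\Rightarrow(1)$).

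The fix is the paper's ordering of the implications: prove $(3)\Rightarrow(1)$ by the dynamically lexicographic order (your $(2)\Rightarrow(1)$ argument already works verbatim from hypothesis $(3)$, since it only uses an effective order-preserving action on a linearly ordered set), and prove $(1)\Rightarrow(2)$ by applying Theorem \ref{t:limLin} to $X=G$ itself with the left regular action. Freeness of that action is what saves discreteness: $\{g\in G: ge=e\}=\{e\}$, so the single point $\nu(e)$, together with a neighborhood $V$ in $X_\infty$ meeting $\nu(G)$ only in $\nu(e)$ (which exists because $\nu$ embeds the discrete set $G$), yields a pointwise-open neighborhood of the identity in $H_+(X_\infty)$ whose trace on $\rho(G)$ is $\{e\}$. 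This is the one idea your proposal is missing.
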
  
\begin{proof}
	%		(1) $\Rightarrow$ (2) 
	%		
	%		Let $\leq$ be a left $G$-invariant lin-order on $G$. Take $X::=(G,\leq)$. 
	%		
	(1) $\Rightarrow$ (2) 
	One may use the compactification $\nu \colon X \hookrightarrow X_{\infty}$ from Theorem \ref{t:limLin}, where $G=X$, $K=X_{\infty}$ and $\dim X_{\infty}=0$.   
%	
%	Consider \textit{Nachbin's compactification} $\nu \colon G \to Y$, \cite{Nachbin,KR}. The corresponding algebra $A_N \subset l_{\infty}(G)$ of this compactification is generated by all bounded order-preserving real functions on $G$. Then $\nu$ is the largest order-preserving   compactification which induces a topological embedding of (discrete) $G$ into $Y$. Moreover, this compactification is a $G$-map (because the algebra $A_N$ is $G$-invariant). The action $G \curvearrowright Y$ induces a topological group embedding $G \hookrightarrow H_+(Y).$ 
	
	(2) $\Rightarrow$ (3) 
	Trivial. 
	
	(3) $\Rightarrow$ (1)  The well-known proof (see, for example, \cite{CR-b,DNR}) is to use a \emph{dynamically lexicographic order} on $G$.
%	Let $G$ acts effectively on a linearly ordered set $(X,\leq)$. Now we show that $G$ is left-ordered. The well-known proof (see, for example, \cite{DNR}) is to use a \emph{dynamically lexicographic order} on $G$. Equip $X$ with an arbitrary well-order $\leq_{X}$ and define a linear order on $G$ as follows
%	$$ g_1 <_G g_2 \ \text{if and only if} \ g_1 w <_X g_2 w$$
%	where $w:=\min\{x \in X: g_1x \neq g_2 x\}$.  
%	Then $g_1 \leq_G g_2 \Rightarrow gg_1 \leq_G gg_2  $ for every $g \in G$. So, $G$ is left linearly orderable. 
\end{proof}

%
%\subsection{Preservation properties} 
%\label{s:pres} 

\sk 
Recall that an element $g \in G$ is said to be \textit{torsion} if there exists $n \in \N=\{1,2, \cdots\}$ such that $g ^n=e$. It is well known and easy to prove that in a left orderable group $G$ only the neutral element $e$ is torsion (see, for example, \cite[Prop. 1.3]{CR-b}). One may give a topological analog of this observation for orderly groups.  

\begin{defin} \label{d:tt} 
	Let $G$ be a topological group. We say $g \in G$ is \textit{weakly topologically torsion} (abbr.: wtt) 
	if $e$ belongs to the topological closure $cl(\{g^n: n \in \N\})$  of the set $\{g^n: n \in \N\}$ in $G$.  %$e \in cl(\{g^n: n \in \N\})$.
\end{defin}

This definition is close to several forms of topological torsion elements known in the literature. See, for example, the survey paper by D. Dikranjan \cite{Dikr-tt} and the book of Dikranjan--Prodanov--Stoyanov \cite[Section 4.4]{DPS89}.  

\begin{prop} \label{p:TopTorsion}
	Let $G$ be an orderly topological group. Then the neutral element is the only weakly topologically torsion element in $G$. 
\end{prop}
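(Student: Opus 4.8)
The plan is to realize $G$ concretely as a topological subgroup of $H_+(K)$ for some compact linearly ordered space $(K,\leq)$, identify $G$ with its image, and prove the contrapositive: any wtt element must act as the identity, hence equal $e$. So I would suppose $g \in G$ is weakly topologically torsion and $g \neq e$, and derive a contradiction. Since the embedding is injective, $g$ is a nontrivial order-preserving homeomorphism of $K$, so there exists $x \in K$ with $gx \neq x$; by totality of $\leq$ I may assume $gx > x$, the case $gx < x$ being entirely symmetric after reversing the order. The guiding intuition is the classical fact, cited in the excerpt, that a nontrivial element of a left-orderable group cannot be torsion: it "pushes points in one direction" and its powers never return.

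Next I would record the monotonicity driving the argument. Because $g$ preserves $\leq$, applying $g$ to $gx > x$ yields $g^{2}x > gx$, and inductively the orbit sequence $(g^{n}x)_{n\geq 1}$ is strictly increasing; in particular $g^{n}x \geq gx > x$ for every $n \in \N = \{1,2,\dots\}$. I want to stress here that it is essential that the definition of wtt uses strictly positive exponents, so that $g^{0}=e$ is excluded from the set $\{g^{n}:n\in\N\}$; otherwise every element would be trivially wtt and the statement would fail. Then I would convert the topological hypothesis into pointwise data: the compact-open topology on $H_+(K)$ is finer than the topology of pointwise convergence, so the closure $cl(\{g^{n}:n\in\N\})$ computed in $G$ (and hence in $H_+(K)$) is contained in the pointwise closure. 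Since $e$ lies in this closure, there is a net $(g^{n_\alpha})_{\alpha}$ with $n_\alpha\in\N$ converging to $e$ pointwise, and evaluating at our fixed $x$ gives $g^{n_\alpha}x \to x$.

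Finally I would invoke the closedness of the order. By the previous two steps the net of pairs $(gx, g^{n_\alpha}x)$ lies in the relation ${\leq}\subset K\times K$ (as $g^{n_\alpha}x \geq gx$), its first coordinate is the constant $gx$ and its second coordinate converges to $x$, so the net converges to $(gx,x)$. By Lemma \ref{l:LinOrdClosed}.1 the relation $\leq$ is closed in the topological square, whence $(gx,x)\in{\leq}$, that is $gx \leq x$, contradicting $gx > x$. This forces $g=e$, so the neutral element is the only wtt element. The argument is elementary throughout; the only points requiring genuine care are the passage from the compact-open closure to the pointwise closure (where the finer-topology-means-smaller-closure direction must be applied correctly) and the exclusion of the exponent $n=0$ — precisely the two features that make this topological statement a faithful analog of the classical torsion-freeness of left-orderable groups.
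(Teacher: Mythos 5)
Your proof is correct and takes essentially the same route as the paper's: both argue by contradiction from the strictly increasing orbit $x < gx < g^{2}x < \cdots$ and then use that some power $g^{n}$ with $n\in\N$ is close to $e$, so $g^{n}x$ is close to $x$, contradicting $g^{n}x \geq gx > x$. The only cosmetic difference is that the paper finishes via continuity of the action at $(e,x)$ and a neighborhood of $e$, whereas you finish via a pointwise-convergent net together with the closedness of the order (Lemma \ref{l:LinOrdClosed}.1); the two formulations are interchangeable.
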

\begin{proof}
	Assuming the contrary, let $g \in G, g \neq e$ such that $e \in cl(\{g^n: n \in \N\})$.  
	Since $G$ is orderly, there exists a compact ordered $G$-space $(K,\leq)$ such that $G$ is a topological subgroup of $H_+(K)$. Since $g \neq e$ and the action is effective, there exists $a \in K$ such that $a \neq ga$. Let, $a < ga$ (the second case of $ga < a$ is, of course, similar). Since the action is order preserving, we obtain
	$$
	\min(K) < a < ga < g^2a < \cdots < g^na < \cdots 
	$$
	where $\min(K)$ is the minimal element in $K$ (which always exists in every compact linearly ordered space and is necessarily $H_+(K)$-fixed). 
	By the (pointwise) continuity of the action, there exists a neighborhood $U$ of $e$ in $G$ such that $\min(K)< ua < ga$ for every $u \in U$. 
	There exists $n_0 \in \N, n_0 >1$ such that $g^{n_0} \in U$. Then $\min(K) < g^{n_0}a < ga$. This contradicts the condition $ga < g^{n_0}a$. 
\end{proof}

\begin{cor} \label{c:p-adicsNotOrd} 
	The topological group $G=(\Z,d_p)$ of all integers with the $p$-adic metric is not orderly. 
\end{cor}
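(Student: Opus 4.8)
The plan is to show that $(\Z,d_p)$ contains a nontrivial weakly topologically torsion element (in the sense of Definition \ref{d:tt}) and then to invoke Proposition \ref{p:TopTorsion}, which forbids such elements in any orderly group. Since the corollary is an immediate consequence of that proposition, the only real task is to produce the witness.

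First I would recall the relevant topology: in the $p$-adic metric $d_p(m,n)=\abs{m-n}_p$, a sequence of integers $a_k$ tends to $0=e$ precisely when the $p$-adic valuations $v_p(a_k)$ tend to $\infty$, i.e. when $a_k$ is divisible by ever higher powers of $p$. I would also note that the group is written additively, so for $g \in \Z$ the ``power'' $g^n$ occurring in Definition \ref{d:tt} is the element $ng$.

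Next I would take the candidate $g=1$, which is nontrivial since $g \neq 0 = e$. Here $g^n = n$, so $\{g^n : n \in \N\}=\N$. To see that $e=0$ lies in the closure of this set, I would examine the subsequence indexed by $n=p^k$: then $g^{p^k}=p^k$ and $v_p(p^k)=k \to \infty$, whence $p^k \to 0$ in $d_p$. Therefore $0 \in cl(\{g^n: n \in \N\})$, so $g=1$ is a weakly topologically torsion element distinct from $e$.

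Finally, Proposition \ref{p:TopTorsion} asserts that in an orderly group the neutral element is the only weakly topologically torsion element; since $(\Z,d_p)$ possesses the nontrivial wtt element $1$, it cannot be orderly. I do not expect any genuine obstacle here: the mathematical content is entirely carried by Proposition \ref{p:TopTorsion}, and the corollary reduces to recognizing the correct topological torsion element. In fact any nonzero integer $m$ serves equally well, since $v_p(p^k m)=k+v_p(m)\to\infty$ shows that $m^{p^k}=p^k m \to 0$.
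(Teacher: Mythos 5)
Your argument is correct and is essentially the paper's own proof: the paper likewise observes that every $a\in\Z$ is weakly topologically torsion because $\lim p^n a = 0$ in $d_p$, and then applies Proposition \ref{p:TopTorsion}. Your verification via the valuation $v_p(p^k m)=k+v_p(m)\to\infty$ just spells out the same limit in more detail.
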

\begin{proof}
	In the additive topological group $(\Z,d_p)$ every element $a \in \Z$ is weakly topologically torsion because $\lim p^n a=0$. 
\end{proof}

\sk 
\begin{remark} \label{r:p-adicsNotOrd}  \ 
	\begin{enumerate}
		\item D. Dikranjan informed us that Definition \ref{d:tt} can be reformulated as follows: an element $g \in G$ is wtt if and only if  
		the cyclic subgroup $\langle g\rangle$ of $G$ is either finite or infinite and non-discrete. 
		This immediately implies that in every orderly topological group $G$ all cyclic subgroups are necessarily discrete
		and infinite (essentially strengthens Corollary \ref{c:p-adicsNotOrd}). 
		% and every element is wtt in every compact group. 
		It seems that 
		the concept of wtt elements in topological groups has its own interest and deserves to be investigated in more details.  
		\item Every orderly topological group $(G,\tau)$ is orderly as an abstract discrete group. Hence, $G$ is L-Ord. The converse, as expected, is not true, as it follows from Corollary \ref{c:p-adicsNotOrd}. 
	\end{enumerate} 
\end{remark}

%1208
\sk 
The following result was suggested by the referee with a different proof. 

\begin{prop} \label{p:noCompactSubgr} 
	Let $G$ be an orderly topological group. Then every compact subgroup of $G$ is trivial. 
\end{prop}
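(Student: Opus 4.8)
The plan is to reduce the statement to the torsion-type results already established, namely Proposition \ref{p:TopTorsion} and the algebraic characterization of weakly topologically torsion (wtt) elements recorded in Remark \ref{r:p-adicsNotOrd}.1. Since $G$ is orderly, Proposition \ref{p:TopTorsion} tells us that $e$ is the unique wtt element of $G$. Hence it suffices to prove that \emph{every} element of a compact subgroup $H \leq G$ is wtt; this at once forces $H = \{e\}$.

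First I would fix $h \in H$ and pass to the closure $C := \cls \langle h \rangle$ of the cyclic group it generates. As a closed subgroup of the compact group $H$, the group $C$ is itself compact. Now I distinguish the two possibilities for $\langle h \rangle$. If $\langle h \rangle$ is finite, then by the criterion of Remark \ref{r:p-adicsNotOrd}.1 the element $h$ is automatically wtt. If $\langle h \rangle$ is infinite, then it is an infinite cyclic group sitting densely inside the compact group $C$, and I claim it cannot be discrete: a discrete subgroup of a Hausdorff topological group is closed, so a discrete $\langle h \rangle$ would be closed in $C$ and hence equal to its dense closure $C$, making $C$ a compact discrete group, which is necessarily finite, contradicting $\langle h \rangle \cong \Z$. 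Thus $\langle h \rangle$ is infinite and non-discrete, so again $h$ is wtt by Remark \ref{r:p-adicsNotOrd}.1. In either case Proposition \ref{p:TopTorsion} yields $h = e$, and since $h \in H$ was arbitrary, $H$ is trivial.

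The only point needing care is the non-discreteness of an infinite cyclic subgroup dense in an infinite compact group; this rests on the standard facts that a discrete subgroup of a Hausdorff group is closed and that a closed discrete subspace of a compact space is finite. As an independent cross-check one can argue order-theoretically inside $H_+(K)$: if some $h \in H$ satisfied $x < hx$, the strictly increasing sequence $(h^n x)_{n \geq 0}$ would admit a supremum $s$, and $s$ is $h$-fixed because an order-preserving self-homeomorphism of a compact linearly ordered space preserves suprema; then, choosing by compactness of $H$ a cluster point $g$ of the net $(h^n)_n$ with indices tending to infinity, evaluation at $x$ gives $gx = s$ and likewise $(gh^{-1})x = s$, whence $h^{-1}x = x$, i.e. $hx = x$, a contradiction. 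I would nevertheless present the wtt argument as the main line, since it is shorter and draws directly on the results immediately preceding the statement.
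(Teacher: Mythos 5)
Your argument is correct, but it takes a genuinely different route from the paper. The paper's proof stays entirely inside the ordered dynamics: writing $G \leq H_+(K,\leq)$, it observes that for a compact subgroup $G_1$ each orbit $G_1x$ is a compact minimal $G_1$-space, and then invokes Lemma \ref{l:min} (a minimal compact linearly ordered $G$-space is a singleton, because the greatest element of $K$ is fixed), so every orbit is a point and effectiveness finishes the job. You instead reduce the statement to Proposition \ref{p:TopTorsion} via the weakly-topologically-torsion machinery: every element of a compact subgroup generates a cyclic group that is either finite or infinite and non-discrete (your closure argument, resting on the standard facts that a discrete subgroup of a Hausdorff group is closed and that a compact discrete space is finite, is sound), hence is wtt, hence trivial. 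One caveat: you lean on the equivalence in Remark \ref{r:p-adicsNotOrd}.1, which the paper records without proof as a communication of Dikranjan; fortunately you only need the easy implication (finite, or infinite and non-discrete, implies wtt), and that direction is elementary --- for the non-discrete case one should note that a symmetric neighbourhood $U\cap U^{-1}$ of $e$ meeting $\langle h\rangle\setminus\{e\}$ must contain a \emph{positive} power of $h$, as Definition \ref{d:tt} closes up only the set $\{g^n : n\in\N\}$. What each approach buys: the paper's proof is self-contained and shorter, exploiting the order structure directly; yours unifies Propositions \ref{p:TopTorsion} and \ref{p:noCompactSubgr} under a single torsion-type obstruction and makes no further use of the ordered compactum beyond what Proposition \ref{p:TopTorsion} already extracted. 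Your order-theoretic cross-check (supremum of the increasing orbit $(h^nx)$ is $h$-fixed, and a cluster point of $(h^n)$ in the compact group evaluates to it) is also valid and is in fact close in spirit to the paper's use of fixed extremal points.
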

\begin{proof} %Since $G$ is orderly, 
	$G$ is a subgroup of $H_+(K,\leq)$ for some compact linearly ordered space $K$. 
Let $G_1$ be a compact subgroup of $G$. Then for every $x \in K$ the orbit $G_1x$ is a minimal $G_1$-space. By Lemma \ref{l:min}, we get $G_1x=\{x\}$. Therefore, $G_1$ is trivial (because the action is effective). 	
	\end{proof} 

%	\begin{lem} \label{l:l} (Sorin \cite{Sorin})  
%	Let $(Y,\leq)$ be a linearly ordered compact. Then the pointwise topology on $H_+(Y,\leq)$ is just the compact-open topology. 
%\end{lem}

\begin{cor} \label{c:Lie} 
	Every locally compact orderly group is a Lie group. 
\end{cor}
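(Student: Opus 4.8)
The plan is to reduce the statement to the classical Montgomery--Zippin (Gleason--Yamabe) structure theorem, which asserts that a locally compact group with \emph{no small subgroups} (NSS)---that is, a group possessing a neighborhood of the neutral element containing no nontrivial subgroup---is a Lie group. Thus it suffices to verify that an orderly locally compact group $G$ has no small subgroups, and this is exactly where Proposition \ref{p:noCompactSubgr} enters: it guarantees that $G$ has no nontrivial compact subgroups at all.

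First I would use local compactness to fix a neighborhood $U$ of the neutral element $e$ whose closure $\overline{U}$ is compact. Next I would record the elementary fact that any subgroup $H$ contained in $U$ has compact closure: $\overline{H}$ is a closed subset of the compact set $\overline{U}$, and the closure of a subgroup is again a subgroup, so $\overline{H}$ is a compact subgroup of $G$. By Proposition \ref{p:noCompactSubgr}, $\overline{H}$ is trivial, and hence $H$ itself is trivial. Consequently $U$ contains no nontrivial subgroup, which is precisely the NSS condition.

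Having established that $G$ is a locally compact NSS group, I would invoke the Gleason--Montgomery--Zippin solution of Hilbert's fifth problem to conclude that $G$ is a Lie group. Alternatively, one can route the argument through the Gleason--Yamabe approximation theorem: there is an open subgroup $G'$ of $G$ and an arbitrarily small compact normal subgroup $N \subseteq G'$ with $G'/N$ a Lie group; since $N$ is a compact subgroup of $G$ it is trivial by Proposition \ref{p:noCompactSubgr}, so the open subgroup $G' = G'/N$ is itself a Lie group, and an open Lie subgroup forces $G$ to be locally Euclidean near $e$, hence Lie.

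The entire mathematical weight of the corollary sits in the cited deep structure theory; the only genuine step on our side is the passage from ``no nontrivial compact subgroups'' to the NSS property, and this is just the routine observation that a subgroup lying inside a relatively compact neighborhood has compact closure. I do not expect any real obstacle beyond stating the classical theorem in the precise form needed and checking that the small subgroup's closure is indeed both compact and a subgroup.
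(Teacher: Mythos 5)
Your proof is correct and follows essentially the same route as the paper: both reduce the corollary to the Montgomery--Zippin structure theorem via Proposition \ref{p:noCompactSubgr}. The only difference is that the paper quotes the classical criterion in the form ``a locally compact group is a Lie group if and only if some neighbourhood of the identity contains no nontrivial \emph{compact} subgroup,'' which makes your (correct but routine) bridging step --- passing from a subgroup inside a relatively compact neighbourhood to its compact closure in order to verify NSS --- unnecessary.
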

\begin{proof}
	By Montgomery-Zippin classical theorem \cite{MZ}, a locally compact group is a Lie group if and only if there is a neighbourhood of 
	the identity which does not contain a non trivial compact subgroup. So, we can apply Proposition \ref{p:noCompactSubgr}.  
	\end{proof}

\sk 
\begin{prop} \label{p:prod} Finite product of orderly groups is orderly. 
\end{prop}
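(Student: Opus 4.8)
The plan is to reduce to the case of two factors by an obvious induction, and then to realize the product $G_1 \times G_2$ inside the homeomorphism group of the \emph{ordered sum} of the two witnessing spaces. Assume $G_i$ is a topological subgroup of $H_+(K_i)$ for compact linearly ordered spaces $(K_i,\leq_i)$, $i=1,2$ (Definition \ref{d:orderly}). Let $K := K_1 \oplus K_2$ be the linearly ordered set obtained by placing all of $K_1$ strictly below all of $K_2$, keeping the original orders inside each piece. I would first check that $K$ is again a compact linearly ordered space: by Fact \ref{f:LinComp} it suffices that every subset has a supremum, and indeed a subset meeting $K_2$ has its supremum computed in the compact space $K_2$, a subset contained in $K_1$ has its supremum in $K_1$, and $\sup(\emptyset)=\min(K_1)$ exists. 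Equivalently, $K_1=(\leftarrow,\min K_2)$ and $K_2=(\max K_1,\to)$ are both clopen, so the interval topology of $K$ is the topological disjoint union of the compact interval topologies of $K_1$ and $K_2$.

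Next I would define an action of $G_1\times G_2$ on $K$ by letting $(g_1,g_2)$ act as $g_1$ on $K_1$ and as $g_2$ on $K_2$. This bijection is order preserving: inside $K_1$ and inside $K_2$ the order is preserved because $g_1,g_2$ are, while any cross pair $x<y$ with $x\in K_1,\ y\in K_2$ is sent to $g_1x\in K_1$ and $g_2y\in K_2$, hence stays below. The inverse is treated identically, so $(g_1,g_2)$ is an order automorphism of $K$, and therefore a homeomorphism for the interval topology. This produces a group homomorphism $\Phi \colon G_1\times G_2 \to H_+(K)$ whose image lies in the subgroup $H \le H_+(K)$ of order-preserving homeomorphisms carrying each clopen piece $K_i$ onto itself. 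The map $\Phi$ is injective, since a trivial action restricts to trivial actions on each $K_i$, and the embeddings $G_i \hookrightarrow H_+(K_i)$ are injective.

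The heart of the argument, and the step I expect to be the main obstacle, is to show that $\Phi$ is a \emph{topological} embedding, i.e. to match the compact-open topologies. For this I would prove that the restriction map $\rho \colon H \to H_+(K_1)\times H_+(K_2),\ h \mapsto (h\rest K_1,\ h\rest K_2)$ is an isomorphism of topological groups; it is visibly an algebraic isomorphism. Continuity of $\rho$ and of $\rho^{-1}$ follows from the clopenness of each $K_i$ in $K$: a subbasic compact-open set $\{h : h(C)\subseteq U\}$, with $C\subseteq K$ compact and $U\subseteq K$ open, splits along $C=C_1\sqcup C_2$ and $U=U_1\sqcup U_2$ (with $C_i=C\cap K_i$, $U_i=U\cap K_i$), and for $h\in H$ the condition $h(C)\subseteq U$ is equivalent to $(h\rest K_1)(C_1)\subseteq U_1$ together with $(h\rest K_2)(C_2)\subseteq U_2$; this converts subbasic sets on one side into subbasic sets on the other, in both directions. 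Granting this, $\Phi$ factors as the product embedding $G_1\times G_2 \hookrightarrow H_+(K_1)\times H_+(K_2)$ followed by the topological isomorphism $\rho^{-1}$ onto $H \le H_+(K)$; since a product of topological embeddings is a topological embedding and $H$ carries the subspace topology, $\Phi$ is a topological embedding and $G_1\times G_2$ is orderly. An immediate induction then yields the statement for an arbitrary finite product.
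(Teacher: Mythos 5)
Your proof is correct, and the core construction coincides with the paper's: both realize $G_1\times G_2$ inside $H_+(K)$ for the ordered sum $K=K_1\cup K_2$ (all of $K_1$ below all of $K_2$), acting coordinatewise on the two pieces. The only genuine divergence is in the step you rightly identify as the heart of the matter, namely verifying that the injective continuous homomorphism into $H_+(K)$ is a topological embedding. The paper handles this by citing Sorin's theorem that the pointwise and compact--open topologies coincide on $H_+(Y)$ for every compact linearly ordered $Y$, and then matches pointwise neighborhoods: given $U=U_1\times U_2$ it produces finitely many points and neighborhoods in $K_1$ and in $K_2$ separately and takes their union. You instead stay entirely within the compact--open topology and exploit the clopenness of $K_1$ and $K_2$ in the ordered sum to show that the restriction map $H\to H_+(K_1)\times H_+(K_2)$ is a topological isomorphism, splitting each subbasic set $\{h: h(C)\subseteq U\}$ along $C=C_1\sqcup C_2$, $U=U_1\sqcup U_2$. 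Your route is more self-contained (no appeal to Sorin's result) and isolates the reusable fact that $H_+$ of a topological disjoint union of invariant clopen ordered pieces is the product of the $H_+$'s; the paper's route is shorter on the page because the pointwise description of the topology makes the neighborhood bookkeeping trivial. Your preliminary verifications (compactness of $K$ via Fact~\ref{f:LinComp}, clopenness of the two pieces, order-preservation of the glued action) are all sound, modulo the harmless degenerate case of an empty factor.
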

\begin{proof} 
	Let $G_1, G_2$ be orderly topological groups. We have to show that $G_1 \times G_2$ is orderly.
	Let $G_i$ be a topological subgroup of $H_+(K_i,\leq_i)$, where $K_i$ is a linearly ordered compact space. It is enough to show that $G:=G_1 \times G_2$ is topologically embedded into $H_+(K,\leq)$, where $K:=K_1 \cup K_2$ is lexicographically ordered ($x_1<x_2 \ \ \forall x_i \in K_i$). 
	Clearly, $K$ is a compact linearly ordered space. 
	Define the following action 
	$$
	G \times K \to K, \ \ (g_1,g_2)(x)=
	\begin{cases}g_1(x) \ \ \ x \in K_1 \\
		g_2(x) \ \ \ x \in K_2  
	\end{cases}  
	$$ 
	Then this action is continuous and effective.  Now
	it is enough to check that the induced injective continuous homomorphism $h: G \to H_+(K)$ is a topological embedding. 
	%This can be done easily using Lemma \ref{l:l}. 
	
	Let $U=U_1 \times U_2$ be a \nbd of $e$ in $G$. We claim that there exist finitely many points $\alpha:=\{x_1, \cdots, x_n\}$ in $K$ 
	and their neighborhoods $\gamma:=\{V_1, \cdots,  V_n\}$ in $K$ such that 
	$$\{g \in G: \ g(x_k) \in V_k\} \subset U.$$

	\sk 
	Note that for every linearly ordered compact space $(Y,\leq)$ the pointwise and compact-open topologies coincide on $H_+(Y,\leq)$ (see Sorin \cite{Sorin}).  
	Therefore, there exist $\alpha_1:=\{a_1, \cdots,a_s\} \subset K_1$ with nbds $\g_1:=\{W_1, \cdots, W_s\}$ in $K_1$ and  $\a_2:=\{b_1, \cdots, b_m\} \subset K_2$ with neighborhoods  
	$\g_2:=\{P_1, \cdots, P_m\}$ in $K_2$ 
	such that 
	$$
	\{g \in G_1: \ g(a_i) \in W_i\} \subset U_1, \ \ \ \{g \in G_2: \ g(b_i) \in P_i\} \subset U_2. 
	$$
	Now define $\a:=\a_1 \cup \a_2$ and $\g:=\g_1 \cup \g_2$.  
\end{proof}

We do not know if the c-orderly analog of Proposition \ref{p:prod} is also true. 

\sk 
\begin{example} \label{e:R^n} 
$\R^n$ is orderly for every $n \in \N$.  Indeed, $\R$ is a topological subgroup of $H_+[0,1]$ (treat $[0,1]$ as the 2-point compactification of $\R$). Now, apply Proposition \ref{p:prod}.
\end{example}

The following question was proposed by the referee and also by V. Pestov. 

\begin{question} \label{q:LC} 
	Which locally compact group is: orderly ? c-orderly ?
\end{question}

At least for orderly case, ``locally compact group" can be replaced by ``Lie group" (Corollary \ref{c:Lie}). Note also that every locally compact subgroup of $H_+(S^1)$ is a Lie group (see Ghys \cite[Theorem 4.7]{Ghys}).

\sk 
\section{Ordered enveloping semigroup compactifications} \label{s:env} 

%Let $\leq$ be a partial order on a topological space $X$ such that $\leq$ is closed in $X \times X$. Then the triple $(X,\tau,\leq)$ is said to be a \textit{partially ordered topological space}. 

A semigroup $S$ with a topology $\tau$ is said to be \textit{right topological} if every right translation 
$\rho_s\colon S \to S, x \mapsto xs$ is continuous. As in \cite{BJM},   
a \textit{right topological semigroup compactification} (in short: \textit{rts-compactification}) of a group $G$ is a pair $(\g, S)$ such that $S$ 
is a compact right topological semigroup, and $\g$ is a continuous semigroup homomorphism
from $G$ into $S$, where $\g(G)$ is dense in $S$ and the 
natural action $G \times S \to S$ is continuous. 
%left translation $\lambda_g \colon S \to S, x \mapsto \g(g)x$ is continuous for every $g \in G$. 

The most important example comes from topological dynamics. For every compact $G$-system $K$ the pointwise closure in $K^K$ of the set %(in fact, a group) 
of all $g$-translations $K \to K$ (where $g \in G$) is a compact right topological semigroup which is said to be an (Ellis) \textit{enveloping semigroup} of $K$; notation $E(K)$. See \cite{Gl-env} for more details. 
We get a compactification map $j \colon G \to E(K)$, the so-called \textit{Ellis compactification}. This map is an injection if the action is effective but $j$ is not in general \textit{proper} even if $G$ is embedded into the homeomorphism group $H(K)$ with respect to the compact open topology. 
%%%%%%%%%

\sk 
In our opinion, the following topological version of orderability deserves special interest. 
It was inspired by Hindmann and Kopperman \cite{HK} (who dealt with discrete $G$).

\begin{defin} \label{d:StrongOrderly} 
	Let $S$ be a compact right topological (in short: crt) semigroup. We say that $S$ is a \textit{linearly ordered crt-semigroup} if there exists a bi-invariant linear order on $S$ such that the interval topology is just the given topology. 
%$\g$ is a topological embedding and order embedding 
\begin{enumerate}
	\item A crt-semigroup compactification $\g \colon G \hookrightarrow S$ of a topological group $G$ with a bi-invariant order is an \textit{ordered semigroup compactification} if $S$ is a linearly ordered crt-semigroup such that $\g$ is an order compactification. 
	\item  We say that $G$ is \textit{dynamically orderable} if it admits a \textit{proper} order semigroup compactification  (i.e., $\g \colon G \hookrightarrow S$ is a topological embedding and order embedding). 
%	\item [(M)] If, in addition, $S$ can be chosen metrizable,  then we say that $G$ is an $M$-group. 
%	\item [(M$_{count}$)] If $S$ is countable  in (DO), then we say that $G$ is an    $M_{count}$-\textit{group}.
\end{enumerate}
	 
\end{defin}

\sk 
%Since $G$ is embedded into $H_+(S)$, we obtain that 
Every dynamically orderable group $G$ is orderly as a topological group 
($G$ is embedded into $H_+(S)$) 
and orderable as an abstract group (because the linear order on $S$ is bi-invariant). 
%Every $M_{count}$-group is an $M$-group because 
%the enveloping semigroup $E(K) \subset K^K$ of 
%every countable compact $G$-space $K$ is metrizable. 

%\{Dynamically orderable topological groups\} $\subset$ \{orderly topological groups\}  

\begin{example} \label{e:exSLin} \ 
	\ben 
	\item The standard two-point compactification $S=\{-\infty\} \cup \R \cup \{\infty\}$ of the reals $\R$ (with the usual topology) % (or, of $\Z$) 
	is an ordered rts-compactification. Here, the semigroup operation in $S$ (keeping the additive symbol) requires $s + \infty =\infty$ and 
	$s + (-\infty) = -\infty$ for every $s \in S$. Hence, $S$ is not commutative.  
	\item (N. Hindman and R.D. Kopperman \cite{HK}) For every linearly ordered group $(G,\leq)$ with the discrete topology, there exist proper linearly ordered rts-compactifications. Moreover, between them there exists the greatest compactification $G \hookrightarrow \mu G$, which, in fact, is the Nachbin's compactification of $(G, \tau_{discr}, \leq)$.  For example, if $\Q$ is the naturally ordered group of all rationals with the discrete topology then $\Q \hookrightarrow \mu \Q$ is well defined. The corresponding $\mu \Q$ is a nonmetrizable compact right topological semigroup. For an intuitive picture about the semigroup $\mu \Q$ we need: a) to replace any point $q \in \Q$ by the ordered triple $(q^-,q,q^+)$; b) to add the ordered pair $(r^-,r^+)$ for every irrational $r \in \R\setminus \Q$; c) to add the end-elements $-\infty, + \infty$ 
	%1208 (s) 
	(more formally, we consider 
	$$
	\{-\infty, + \infty\} \times \{0\} \times \{-1.0.1\} \times (\R \setminus \Q) \times \{-1.1|\} 
	$$
	ordered lexicographically). 
	\een
\end{example}

\begin{defin} \label{d:StrMon} 
Let $(G,\leq_G)$ be a %topological 
group with a linear order $\leq_G$ and  
$(K,\leq)$ be a linearly ordered compact effective $G$-system. We say that the given action is \textit{strongly monotone} if  
the maps $\tilde{x} \colon G \to X, g \mapsto gx$ are order preserving for every $x \in X$. 
\end{defin}

\begin{thm} \label{t:env} \
	\begin{enumerate}
		\item 	For every strongly monotone action of $(G,\leq_G)$ on $(K,\leq)$ 
		the corresponding Ellis  semigroup $E(K)$ is a linearly ordered semigroup and the Ellis compactification $j \colon G \to E(K)$  
		is an injective linearly ordered semigroup compactification.
%		
%		
%		$G$ be a topological group with a linear order $\leq_G$ and  
%		$(K,\leq)$ be a linearly ordered compact effective $G$-system 
%		%$g_1 \leq_G g_2$ if and only if $g_1x \leq g_2x$ for every $x \in K$. 
%		such that every orbit map $\tilde{z} \colon G \to X, g \mapsto gz$ is order preserving. 
%		Then %$G$ is dynamically orderable. More precisely, 
%		the Ellis  semigroup $E(K)$ is a linearly ordered semigroup and the Ellis compactification $j \colon G \to E(K)$ %an order compactification.   
%		is an injective linearly ordered semigroup compactification.
		\item If, in addition, $G$ is separable then $E(K)$ is hereditarily separable and first countable. 
	\end{enumerate}
\end{thm}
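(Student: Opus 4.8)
The plan is to equip $E(K)$ with the \emph{pointwise order} inherited from $K^K$: for $p,q \in E(K) \subset K^K$ declare $p \leq q$ whenever $p(x) \leq q(x)$ for every $x \in K$. The decisive observation is that strong monotonicity says precisely that the Ellis compactification $j \colon G \to E(K),\ g \mapsto \tilde g$ is order preserving into this pointwise order; combined with effectiveness of the action one gets $g_1 \leq_G g_2 \iff \tilde{g_1} \leq \tilde{g_2}$, so $j(G)$ is a linearly ordered subset of $(E(K),\leq)$ order-isomorphic to $(G,\leq_G)$. First I would record that the pointwise relation $\leq$ is a closed partial order on $E(K)$: for each fixed $x$ the set $\{(p,q): p(x) \leq q(x)\}$ is the preimage of the order $\leq_K \subset K \times K$ (closed by Lemma \ref{l:LinOrdClosed}.1) under the continuous evaluation $(p,q) \mapsto (p(x),q(x))$, and $\leq$ is the intersection over all $x$ of these closed sets. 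Since $j(G)$ is dense in $E(K)$ and $\leq$ restricts to a linear order there, Lemma \ref{l:DenseLin} upgrades $\leq$ to a linear order on all of $E(K)$.

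Next I would verify the two structural requirements of Definition \ref{d:StrongOrderly}. \textbf{Bi-invariance.} Here I first note that every $p \in E(K)$ is order preserving on $K$: if $x \leq y$ then $\tilde{g_\alpha}(x) \leq \tilde{g_\alpha}(y)$ for any net $\tilde{g_\alpha} \to p$ (each $\tilde g$ lies in $H_+(K)$), and passing to the limit the closedness of $\leq_K$ yields $p(x) \leq p(y)$. Writing the semigroup product as composition $p \cdot q = p \circ q$, left invariance ($p \leq q \Rightarrow r \circ p \leq r \circ q$) follows from monotonicity of $r$, while right invariance ($p \circ r \leq q \circ r$) is immediate by evaluating the pointwise inequality $p \leq q$ at the points $r(x)$. \textbf{Topology.} The interval topology $\tau_\leq$ of $\leq$ coincides with the pointwise topology $\tau_{pw}$: every ray is $\tau_{pw}$-open because, e.g., $(\leftarrow,p) = E(K) \setminus \{r: p \leq r\}$ is the complement of a closed section of the closed relation $\leq$, so $\tau_\leq \subseteq \tau_{pw}$; as $(E(K),\tau_{pw})$ is compact and $(E(K),\tau_\leq)$ is Hausdorff, the identity is a continuous bijection of a compact space onto a Hausdorff space, hence a homeomorphism, giving $\tau_\leq = \tau_{pw}$. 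Together these show that $(E(K),\leq)$ is a linearly ordered crt-semigroup and that $j$, being an injective, order-preserving, continuous homomorphism with dense image, is an injective linearly ordered semigroup compactification; this is assertion (1).

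For assertion (2) the idea is that separability passes from $G$ to $E(K)$ and then classical order-topology theory takes over. If $D \subseteq G$ is countable and dense then, $j$ being continuous with dense image, $j(D)$ is a countable dense subset of $E(K)$, so $E(K)$ is a separable compact linearly ordered space. At this point I would invoke the result of Ostaszewski \cite{Ost} (see Remark \ref{r:Ost}): a separable compact LOTS is automatically hereditarily separable and first countable. This yields (2).

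I expect the genuinely delicate points to be (a) confirming that the \emph{whole} enveloping semigroup consists of order-preserving maps --- without which left invariance would fail --- and (b) the topology-matching step, where one must rule out $\tau_\leq$ being strictly coarser than $\tau_{pw}$. Both are handled cleanly by the closedness of $\leq_K$ together with the compact-to-Hausdorff rigidity of $\tau_{pw}$, so the whole argument hinges on that closedness (Lemma \ref{l:LinOrdClosed}.1) being available. Part (2) is then essentially a citation, the only real content being the reduction to separability of $E(K)$.
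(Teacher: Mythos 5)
Your proof is correct and follows essentially the same route as the paper's: the pointwise order $p\preceq q \iff p(a)\leq q(a)$ for all $a\in K$, its closedness via Lemma \ref{l:LinOrdClosed}.1, linearity of the order via density of $j(G)$ and Lemma \ref{l:DenseLin}, right invariance by evaluating at $r(x)$, and part (2) by Ostaszewski's theorem applied to the separable compact LOTS $E(K)$. The only differences are minor: you obtain left invariance by first showing each $p\in E(K)$ is monotone on $K$ (the paper instead passes a net $g_i\to p$ through the closed relation $\preceq$ using that $E(K)$ is right topological), and you explicitly check that the interval topology of $\preceq$ coincides with the pointwise topology via the compact-to-Hausdorff argument --- a point the paper's proof leaves implicit.
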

\begin{proof} (1)  Denote by $C_+(K,[0,1])$ the set of all order-preserving continuous functions $K \to [0,1]$. 
	The Ellis compactification $j \colon G \to E(K)$ is equivalent to the diagonal compactification 
	$j^* \colon G \to cl(j^*(G)) \subset [0,1]^F$, where 
	$$F=\{m(f,a) \colon G \to [0,1], g \mapsto f(ga) \ \ f \in C_+(K,[0,1]), a \in K\}.$$
	In order to see this, first note that 
	every $m(f,a)$ can be extended to a continuous function $E(K) \to [0,1]$. Now it is enough to show that such extensions separate the points of $E(K)$. For every distinct $p,q \in E(K)$ there exists $a \in K$ such that $pa \neq qa$. In order to complete this part of the proof, recall that by Nachbin's theorem $C_+(K,[0,1])$ separates the points of $K$, \cite{Nachbin}. 
	
	%	Now observe that $F$ is left and right $G$-invariant. Indeed, if $h=m(f,a) \in F$ then $hg=m(fg,a)$ and $gh=m(f,ga)$.    
	
	The desired linear order $\preceq$ on $E(K)$ is defined as follows:
	$$
	s_1 \preceq s_2 \ \text{iff} \ s_1a \leq s_2a \ \ \forall a \in K. 
	$$ 
	It is equivalent to
	$$f(s_1a) \leq f(s_2a) \ \ \forall f \in C_+(K,[0,1]), a \in K$$ (because $K$ is linearly ordered and $C_+(K,[0,1])$ separates the points). Clearly, we get a partial order on the set $E(K)$. 
	Moreover, this partial order extends the linear order on $G=j(G)$
	(use that every orbit map $\tilde{a} \colon G \to X, g \mapsto ga$ is order preserving.). 
	In Claim, 5 we show that this partial order $\preceq$ on $E(K)$ is linear. 
	
	\sk
	Claim 1: $s_1p \preceq s_2p$ for every $s_1 \preceq s_2$ and $p$ in $E(K)$. 
	\sk 
	\begin{proof} 
		$s_1 \preceq s_2$ means that $s_1a \leq s_2a$ for every $a \in K$. Then 
		$$(s_1p)a=s_1(pa) \leq s_2(pa) = (s_2p)a$$ for every $p \in E(K)$. 	
	\end{proof}
	
	\sk 
	Claim 2: This partial order $\preceq$ is closed on $E(K)$. 
	\sk
	\begin{proof}  
		Let $\lim p_i = p, \lim q_i = q$ are convergent nets and $p_i \preceq q_i$ in $E(K)$. Then $p_ia \leq q_ia $ for all $a \in K$.  At the same time,  by definition of the pointwise topology on $E$, we have $\lim p_ia =pa, \lim q_ia=qa$. Since the linear order is closed in $K$ (Lemma \ref{l:LinOrdClosed}.1), we get $pa \leq qa$.   
	\end{proof}

	\sk
	Claim 3: The semigroup $E(K)$ is (left and right) ordered.  
	\sk
	\begin{proof} 
		After Claim 1, now it is enough to show that $ps_1 \preceq ps_2$ for every $s_1 \preceq s_2$ and $p \in E(K)$. 
		Since $j(G)$ is dense in $E(K)$, for a given $p \in E(K)$, there exists a net $g_i \in G$ such that $\lim g_i =p$. Then $\lim g_is_1 = ps_1, \lim g_is_2 = ps_2$ because $E$ is right topological. Since $s_1 \preceq s_2$ and the action of $G$ on $K$ is by order-preserving transformations, we have $g_is_1a \leq g_is_2a$ for every $a \in K$ and every $g_i$.
		Therefore, $g_is_1 \preceq g_is_2$. Now, by Claim 2 we have $ps_1 \leq ps_2$, as desired. 	
	\end{proof}

	\sk 
	Claim 4: $j \colon G \to E(K)$ is an order embedding.   
	\sk 
	\begin{proof} Let $g_1, g_2$ be distinct elements in $G$. 
		If $g_1 \leq_G g_2$ then 
		$g_1x \leq g_2x$ for every $x \in K$. 
		Hence, $j(g_1) \preceq j(g_2)$. Conversely, let $g_1x \leq g_2x$ for every $x \in K$. Since the action of $G$ on $K$ is effective, there exists $x_0 \in K$ such that $g_1x_0 \neq g_2x_0$. So, $g_1x_0 < g_2x_0$. Then,  necessarily, $g_1 < g_2$
		 (otherwise, the orbit map $\tilde{x_0}: G \to K$ is not order preserving). 
		%Now recall that $\preceq$ is closed in $E \times E$ by Claim 2. 	
	\end{proof}

	\sk
	
	Claim 5: $\preceq$ is a linear order on $E(K)$. 
	
	\sk 
	\begin{proof}
Since the partial order $\preceq$ is closed in $E(K)$ (Claim 2) and $j(G)$ is a dense linearly ordered subset  (Claim 4), we can apply Lemma \ref{l:DenseLin}. 
	%It can be proved also as in \cite[Lemma 1.7]{HK}. 
	\end{proof} 

%In order to complete the proof of Theorem \ref{t:env} note that the last assertion follows 
%% if $G$ is separable then $E(K)$ is hereditarily separable and first countable 
%according to

(2) Use Remark \ref{r:Ost} below. 
\end{proof}

%In the following theorem we use special compactification, as described in Theorem  \ref{t:limLin} below. In the second assertion we use also 
%results of R. Pol \cite{P} and from \cite{GM-c}.  

\begin{remark} \label{r:inj+} 
		If in Theorem \ref{t:env} $G$ carries the pointwise topology with respect to the action on $K$, then $j$ is a topological embedding. 
\end{remark}

\begin{remark} \label{r:Ost} 
	For separable (e.g., countable) $G$, the space $E(K)$  from Theorem \ref{t:env} is separable. 
	Moreover, then $E(K)$ is hereditarily separable and first countable. 
	It is important to take into account that, in general, every compact linearly ordered separable space $S$ comes from a double arrow type construction. Namely, $S$ is homeomorphic to a special linearly ordered space $X_A$ which can be obtained using a splitting points construction. By a result of Ostaszewski (see \cite{Ost} and its reformulation \cite[Result 1.1]{Mar}) for $S$ there exist: a closed subset $X \subset [0,1]$ and a subset $A \subset K$ such that $X_A=(X \times \{0\} \cup (A \times \{1\}))$ is endowed with the corresponding lexicographic order inherited from $X \times \{0,1\}$. Note that such $X_A$ always is hereditarily separable and first countable. $X_A$ is  metrizable if and only if $A$ is countable. It would be interesting to know when $E(K)$ is metrizable. (cf. Question \ref{q:MetrizSemComp}).  
\end{remark}

\begin{question}  \label{q:c-version} 
	Does the circular version of Theorem \ref{t:env} remains true ? 
\end{question}

\begin{thm} \label{t:bi-Case}  
	Let $G$ be an abstract group. 
	\ben 
	\item  
	The following are equivalent:
	\begin{itemize}
		\item [(a)] $G$ is orderable;  
		%	\item There exist: a linear order $<_G$ on $G$, an ordered set $(X,<)$ and an order preserving group embedding 
		%	$h: G \to \Aut(X,<)$ such that  $g_1 <^G g_2$ iff $g_1x < g_2x$ for every $x \in X$. 
		\item [(b)] $G$ is dynamically orderable 
		
		\noindent (i.e., $G$ admits an order semigroup compactification $\g \colon G \to S$ which is a topological embedding of the discrete group $G$). 
	\end{itemize}
	\item 
	If $G$ is countable then one may choose $S$ such that, in addition, $S$ is first countable and hereditarily separable.  
	\een 
\end{thm}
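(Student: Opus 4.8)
The plan is to prove the two implications separately, the reverse one being essentially a tautology and the forward one carrying all the content; most of the real work is already packaged in Theorems \ref{t:limLin} and \ref{t:env}, so the proof becomes a matter of choosing the right $G$-space and then checking properness.

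For (b) $\Rightarrow$ (a): suppose $G$ is dynamically orderable, witnessed by a proper order semigroup compactification $\gamma \colon G \hookrightarrow S$. By Definition \ref{d:StrongOrderly} the linear order $\preceq$ on the crt-semigroup $S$ is bi-invariant, and $\gamma$ is simultaneously a homomorphism and an order embedding. Pulling $\preceq$ back along $\gamma$ gives a linear order $\leq_G$ on $G$; since $\gamma(hg)=\gamma(h)\gamma(g)$ and $\preceq$ is bi-invariant, $\leq_G$ is bi-invariant as well, so $G$ is orderable. This is exactly the observation recorded right after Definition \ref{d:StrongOrderly}.

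For (a) $\Rightarrow$ (b): given a bi-invariant linear order $\leq_G$ on $G$, I would take $X=(G,\leq_G)$ and let $G$ act on $X$ by left translations. Left-invariance gives $G \subseteq \Aut(X,\leq_G)$, so Theorem \ref{t:limLin} applies and produces a compact, zero-dimensional, linearly ordered $G$-space $K := X_{\infty}$ together with a dense topological $G$-embedding $\nu \colon G \hookrightarrow K$ of the discrete group $G$. The pivotal point is that the orbit maps $\tilde{x}\colon G \to X$, $g \mapsto gx$, are precisely the right translations of $(G,\leq_G)$, so by right-invariance they are LOP; Theorem \ref{t:limLin}(3) then upgrades this to the statement that every orbit map $\tilde{a}\colon G \to K$ is LOP. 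Thus the action of $(G,\leq_G)$ on $(K,\leq)$ is strongly monotone in the sense of Definition \ref{d:StrMon}, and it is effective since the left regular representation is faithful. Now Theorem \ref{t:env}(1) furnishes a linearly ordered crt-semigroup $S:=E(K)$ with bi-invariant order, together with the Ellis compactification $j\colon G \to E(K)$, which is an injective order embedding; I then set $\gamma := j$. I expect the main obstacle to be hidden here: arranging strong monotonicity is what forces us to use \emph{both} left- and right-invariance of $\leq_G$, with the same order simultaneously playing the role of the order on the acting group and on the base set.

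It remains to check that $\gamma$ is a topological embedding of the \emph{discrete} group $G$, and this is the only genuinely topological step. The topology that $j$ induces on $G$ is the topology of pointwise convergence of the translations on $K$, i.e. the pointwise topology of the action. But the orbit map at the base point, $g \mapsto g\cdot\nu(e)=\nu(g)$, is exactly $\nu$, which by construction is a topological embedding of discrete $G$; choosing a neighborhood $V$ of $\nu(e)$ with $V \cap \nu(G)=\{\nu(e)\}$ gives $\{g : g\cdot\nu(e)\in V\}=\{e\}$, so the induced pointwise topology on $G$ is already discrete. Remark \ref{r:inj+} then guarantees that $j=\gamma$ is a topological embedding, completing (a) $\Rightarrow$ (b). Finally, for part (2), if $G$ is countable then it is separable as a discrete group, so Theorem \ref{t:env}(2) (equivalently, the Ostaszewski-type splitting-points description recalled in Remark \ref{r:Ost}) yields that $S=E(K)$ is hereditarily separable and first countable.
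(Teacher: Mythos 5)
Your proposal is correct and follows essentially the same route as the paper: pulling back the bi-invariant order for (b) $\Rightarrow$ (a), and for (a) $\Rightarrow$ (b) combining Theorem \ref{t:limLin} applied to $X=(G,\leq_G)$ with Theorem \ref{t:env} applied to $K=X_{\infty}$, then deducing discreteness of $j(G)$ from the continuity of evaluation at $e$ together with the discreteness of $\nu(G)$ in $K$. Your explicit verification of strong monotonicity via right-invariance and of effectiveness is a welcome expansion of details the paper leaves implicit, but it is the same argument.
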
 
\begin{proof} (1) 
	(b) $\Rightarrow$ (a) The order inherited from the ordered semigroup $S$ on its subgroup $G=j(G)$ is bi-invariant.

	(a) $\Rightarrow$ (b) 
	Take a bi-invariant order $\leq$ on $G$ and consider the order $G$-compactification $X \to X_{\infty}$, where $X=(G,\leq)$; see Theorem  \ref{t:limLin}. Now 
	apply Theorem \ref{t:env} to the compact $G$-system $K:=X_{\infty}$. Observe that in this case $j \colon G \to S=E(K)$ necessarily is a topological embedding because discrete $X:=G$ is embedded into $K$ and the orbit map  $$\tilde{a} \colon  E(K) \to K, \ p \mapsto pa$$ is continuous for the unit element $a:=e \in X \subset K$. 
	
	(2) Apply Theorem \ref{t:env}.2 or Remark \ref{r:Ost}. 
%	If $G$ is countable then, according to Theorem  \ref{t:limLin} the compact space $X_{\infty}$ is metrizable. Every linearly (even, circularly) ordered compact $G$-space is tame, \cite{GM-c}. In particular, this is true for the $G$-space  $X_{\infty}$. This is equivalent to say that the enveloping semigroup $E(X_{\infty})$ is a separable Rosenthal compact space. By Theorem \ref{t:env},  
%	$E(X_{\infty})$ is a linearly ordered compact space. By \cite{LB}, in linearly ordered topological spaces (with interval topology), the separability is hereditary. 
%	%That is, in a separable LOTS every topological subspace is separable. This
%	%can be easily extended to the case of c-ordered spaces, 
%	Finally, note that 
%	by results of R. Pol \cite[Section 4, Theorem 3.3]{P}, 
%	every hereditarily separable Rosenthal compact space is first countable.
%	
%	Alternative proof of the first countability can be done by results of Ostaszewski \cite{Ost}. 
%	% (see Debs \cite{Debs}).  
\end{proof}

The first assertion (1) of Theorem \ref{t:bi-Case} can be derived also from \cite[Theorem 2.1]{HK}. % and results of \cite{HK}. 
%which is based on the Nachbin compactification. As we already mentioned in Example \ref{e:exSLin}.2   it might be nonmetrizable for countable ordered group. 

\sk 
Taking into account Remark \ref{r:Ost}, the following question seems to be attractive. 

\begin{question} \label{q:MetrizSemComp}  
	Which countable ordered %(c-ordered) 
	discrete groups $G$
	admit a proper order crt-semigroup compactification $G \hookrightarrow S$ with metrizable $S$.  
%	 are: M-groups? 
%	%$M_{count}$-groups? 
	What about the free group $F_2$? 
\end{question}

\sk  \sk 
\section{Circularly ordered groups}
\sk

The notion of cyclically ordered group is due to L. Rieger \cite{Reiger}. This concept was studied in several directions. 
%Circularly ordered groups can be defined similarly to the case of linear orders.
See, for example, \cite{Zheleva97,Calegari04,BS,CMR}).

 Recall that 
a c-order-preserving action of $G$ on a circularly ordered set $(X,\circ)$ is defined as follows 
$$
[x,y,z] \Leftrightarrow [gx,gy,gz] \ \ \forall \ g \in G, \ x,y,z \in X. 
$$ 
That is, if  $g$-translations $X \to X, x \mapsto gx$ are COP maps for every $g \in G$. 
In particular, one may define \textit{left circularly orderable} (in short: \textit{left c-orderable}) and \textit{circularly orderable} (in short: \textit{c-orderable}) groups. Precisely, this means that $G$ admits a circular order $\circ$ such that the left action (resp., left and right actions) of $G$ on itself is  c-order-preserving.  
 Abbreviation: 
L-COrd and COrd, respectively. 

Every L-Ord group is L-COrd (see Remark \ref{r:chech}.1) and every Ord group is COrd. A finite group is L-COrd iff it is a cyclic group iff it is COrd. The circle group $\T$ is COrd but not Ord. A countable group G is L-COrd iff it acts faithfully on $\T$ by orientation (circular order) preserving homeomorphisms (see, for example,  \cite{Calegari04,BS}). That is, iff $G$ algebraically can be embedded into $H_+(\T)$.

%%%%% By \cite{Zheleva97}, $G$ is L-COrd iff it admits an effective circular order-preserving action. 

\sk  
%\begin{remark} \label{r:CirEx} %Some examples of LCO and bi-CO groups. 
%	\  
%	\ben 
%	
%	\item Every L-Ord group is L-COrd.   
%	\item Every Ord group is COrd. 
%	\item A finite group is L-COrd iff it is a cyclic group iff it is COrd. 
%	\item The circle group $\T$ is COrd but not Ord. 
%	\item \cite{BS} A finitely generated abelian group with cyclic torsion subgroup is COrd. 
%	\item \cite{BS} A free product construction of two groups preserves L-COrd. 
%	\item  
%	\item \cite{Zheleva97} 
%	$G$ is L-COrd iff it admits an effective circular order-preserving action. 
%	\een
%\end{remark}

Note that the world of circular orders is much larger than the world of linear orders. 
For example, $\Z$ admits only two linear orders. However, $\Z$ has continuum many circular orders. 
This contrast is very sharp also in dynamical systems.      
One of the most dramatic differences is for compact minimal $G$-systems. Every minimal linearly ordered $G$-system is trivial (Lemma \ref{l:min}). In contrast, we have many nontrivial circularly ordered $G$-systems in symbolic dynamics. See \cite{GM-c} for further discussion and also Remark \ref{r:M(G)}.

\sk 
  A representation theorem for cyclically ordered groups was proved by S. Swierczkowski \cite{Scw}. This theorem asserts that every cyclically ordered group can be embedded into a lexicographic product $\T \otimes L$ of the circle group $\T$ and an appropriate linearly ordered group~$L$.  

%The following result is an analog of Remark \ref{r:CirEx}.1. 

In Definition \ref{d:orderly} c-orderly (meaning, circular orderly) topological groups were defined  as topological subgroups of $H_+(K)$ for some compact circularly ordered space $(K,\circ)$. 
%In particular, the Polish group $H_+(S)$ is orderly. 
		For every circularly ordered set $(X,\circ)$ the topological group $\Aut(X,\leq)$ with its pointwise topology with respect to the discrete set $X$ is c-orderly (Corollary \ref{c:withPointw}).

\begin{prop}
	Every orderly group is c-orderly. 
\end{prop}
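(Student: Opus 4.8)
The plan is to reuse the very space that witnesses orderliness, now equipped with its induced circular order. Suppose $G$ is orderly, so by Definition \ref{d:orderly} there are a compact linearly ordered space $(K,\leq)$ and a topological embedding $G \hookrightarrow H_+(K,\leq)$. I would keep the same underlying set $K$ and put on it the standard circular order $\circ := \circ_{\leq}$ associated to $\leq$ (Remark \ref{r:chech}.1), then check that $(K,\circ)$ is a compact circularly ordered space into whose homeomorphism group $G$ still embeds.

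The first step is to compare topologies. Since $\tau_{\leq}$ is compact by hypothesis, Lemma \ref{l:closed} gives that $\circ_{\leq}$ generates the very same topology, i.e. $\tau_{\circ}=\tau_{\leq}$. Thus $(K,\circ)$ is genuinely a \emph{compact} circularly ordered space, carrying the same compact Hausdorff topology as $(K,\leq)$. The second step is to include the homeomorphism groups. Any $g\in H_+(K,\leq)$ is an order-preserving bijection, hence preserves each of the strict chains $x<y<z$, $y<z<x$, $z<x<y$ that define $[x,y,z]$ in $\circ_{\leq}$; so $g$ is a COP automorphism of $(K,\circ)$ (as already recorded in the Introduction). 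Being also a homeomorphism of $(K,\tau_{\leq})=(K,\tau_{\circ})$, it lies in $H_+(K,\circ)$. Therefore $H_+(K,\leq)\subseteq H_+(K,\circ)$ as subgroups of $H(K)$.

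The final step is topological. Both $H_+(K,\leq)$ and $H_+(K,\circ)$ carry the compact-open topology relative to the \emph{same} compact space $K$, and this topology depends only on the topology of $K$, not on the chosen order. Hence the subspace topology induced on $H_+(K,\leq)$ from $H(K)$ is the same whether one regards it inside $H_+(K,\circ)$ or not, so $H_+(K,\leq)$ is a topological subgroup of $H_+(K,\circ)$. Composing the topological embeddings $G\hookrightarrow H_+(K,\leq)\hookrightarrow H_+(K,\circ)$ exhibits $G$ as a topological subgroup of $H_+(K,\circ)$ for the compact circularly ordered $(K,\circ)$, i.e. $G$ is c-orderly.

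The only delicate point, and the one I expect to be the main obstacle, is the topology-matching step: in general the circular topology $\tau_{\circ_{\leq}}$ can be strictly coarser than the interval topology $\tau_{\leq}$, and then $(K,\circ)$ might fail to be the right compact ordered space. This is precisely where compactness of $K$ is essential and where Lemma \ref{l:closed} is invoked to force $\tau_{\circ}=\tau_{\leq}$; everything else is a routine unwinding of definitions.
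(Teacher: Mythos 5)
Your proof is correct and follows exactly the paper's (very terse) argument: the paper's proof is simply ``Use Lemma \ref{l:closed}'', relying on the observation already made in the Introduction that every $g\in H_+(K,\leq)$ preserves $\circ_{\leq}$. You have merely spelled out the details — including the key topology-matching step via Lemma \ref{l:closed}, which is indeed where compactness is needed.
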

\begin{proof}
	Use Lemma \ref{l:closed}. 
	\end{proof}

The equivalence of (1) and (3) in Theorem \ref{t:c-ordCase} is known (Zheleva \cite{Zheleva97}). 
%
%The "linear order" version of this theorem is much easier to prove. See Theorem \ref{t:LinCase}, where the equivalence (1) $\Leftrightarrow$ (2) is well known; see for example, \cite{DNR}.    
%As to our Theorem A, in contrast to the linear orders,  only  
%%% For countable groups one may choose the circle $X:=(\T,\circ)$, \cite{Calegari04,BS}.  
We present below a direct proof of Theorem \ref{t:c-ordCase} which 
strengthens results of Zheleva and probably has intrinsic interest. 

\begin{thm} \label{t:c-ordCase} 
	Let $G$ be an abstract group. 
	The following are equivalent:
	\ben 
	\item $G$ is left-c-orderable;   
	\item $(G,\tau_{discr})$ is c-orderly \nl (i.e., a discrete copy of $G$ topologically is embedded into the topological group $H_+(K,\circ)$ for some compact circularly ordered space $K$);  
	\item $G$ algebraically is embedded into the group $\Aut(X,\circ)$ for some circularly ordered set $(X,\circ)$.  
%	\item $G$ is a subgroup of a lexi product $\T \times L$.  
	\een
\end{thm}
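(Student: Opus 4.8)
The plan is to establish the cyclic chain $(1) \Rightarrow (2) \Rightarrow (3) \Rightarrow (1)$, in exact parallel with the linear criterion of Theorem \ref{t:LinCase}: the circular inverse-limit machinery of Theorem \ref{t:lim} will replace the linear one for the ``realization'' direction, while a lexicographic tie-breaking argument will replace the plain dynamical lexicographic order for the ``abstract'' direction. The implication $(2) \Rightarrow (3)$ costs nothing: since $H_+(K,\circ)$ is by its very definition a subgroup of $\Aut(K,\circ)$, a topological embedding of $(G,\tau_{discr})$ into $H_+(K,\circ)$ is in particular an algebraic embedding into $\Aut(X,\circ)$ with $X := K$.

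For $(1) \Rightarrow (2)$ I would begin with a left-invariant circular order $\circ$ on the set $G$ and view left translations as a faithful homomorphism $G \hookrightarrow \Aut(G,\circ)$. Applying Theorem \ref{t:lim} to $X := (G,\circ)$ yields a compact zero-dimensional c-ordered $G$-space $X_{\infty}$ together with a dense, c-order-preserving topological $G$-embedding $\nu \colon G \hookrightarrow X_{\infty}$ of the discrete set $G$; the action then provides a homomorphism $h \colon G \to H_+(X_{\infty},\circ)$. It remains to see that $h$ embeds the \emph{discrete} group $G$ topologically. Injectivity is exactly effectiveness of the left-translation action. For the topological part I would use the base point $\nu(e)$: the evaluation $p \mapsto p\,\nu(e)$ on $H_+(X_{\infty})$ is continuous and sends $h(g)$ to $g\,\nu(e) = \nu(g)$, so the discreteness of $\nu(G)$ in $X_{\infty}$ pulls back to discreteness of $h(G)$, making $h$ an embedding. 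Thus $K := X_{\infty}$ witnesses c-orderliness, and it is metrizable when $G$ is countable by Theorem \ref{t:lim}(3).

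The substance lies in $(3) \Rightarrow (1)$: from a faithful action $G \hookrightarrow \Aut(X,\circ)$ I must build a left-invariant circular order on $G$ itself. I would fix an arbitrary well-order on the index set $X = \{x_\alpha\}$ and, for distinct $g_1,g_2,g_3 \in G$, let $\alpha_0$ be the least index at which $g_1 x_{\alpha_0}, g_2 x_{\alpha_0}, g_3 x_{\alpha_0}$ are not all equal; such an index exists by faithfulness. When these three values are distinct I declare $[g_1,g_2,g_3]$ to hold precisely when $[g_1 x_{\alpha_0}, g_2 x_{\alpha_0}, g_3 x_{\alpha_0}]$ holds in $(X,\circ)$. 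This is the circular analogue of reading off the first coordinate of disagreement, and it is left-invariant because left multiplication by $f$ preserves the set of disagreement indices (being a bijection) and preserves $\circ$ (being c-order-preserving). Unlike the binary comparison in the linear case, however, it can happen that at $\alpha_0$ exactly two values coincide, and indeed there need be \emph{no} index at which all three elements act distinctly simultaneously. This forces a tie-breaking step modeled on the c-ordered lexicographic product of Definition \ref{d:lexic}: if, say, $g_1 x_{\alpha_0} = g_2 x_{\alpha_0} = c \neq g_3 x_{\alpha_0}$, I would break the tie between $g_1$ and $g_2$ at the next index $\alpha_1$ where they differ, comparing $g_1 x_{\alpha_1}$ and $g_2 x_{\alpha_1}$ through the standard cut $\leq_c$ of Remark \ref{r:chech}.2, following cases (2)--(4) of Definition \ref{d:lexic}. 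Since $f$ transports $\leq_c$ to $\leq_{fc}$ covariantly, this rule is again left-invariant.

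I expect the main obstacle to be exactly the verification that the relation assembled from the circular first-coordinate rule and the cut-based tie-breaks genuinely satisfies the four axioms of Definition \ref{newC} — transitivity across the two cases being the delicate point — and that the whole recipe is consistent with cyclicity under permuting $(g_1,g_2,g_3)$. This is precisely the technical content I would isolate as Lemma \ref{l:mainTechn}, whose proof is guided by, and mirrors the internal consistency of, the lexicographic product construction. As a byproduct the same argument shows that faithful c-order-preserving actions characterize left circular orderability, recovering the theorem of Zheleva \cite{Zheleva97}.
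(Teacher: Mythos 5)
Your implications $(1)\Rightarrow(2)$ and $(2)\Rightarrow(3)$ coincide with the paper's: Theorem \ref{t:lim} applied to $X=(G,\circ)$ produces the compact zero-dimensional $K=G_{\infty}$, and the second implication is indeed immediate. The gap is in $(3)\Rightarrow(1)$. Your transfinite lexicographic rule does not give a well-defined circular order on $G$: the cut $\leq_c$ you use to break the tie between $g_1$ and $g_2$ is based at $c=g_1x_{\alpha_0}$, where $\alpha_0$ is the first index at which the \emph{triple} disagrees --- so it depends on the third element. Two companions $g_3$ and $g_4$ can yield different indices $\alpha_0<\alpha_0'$ (both below the first index $\beta$ at which $g_1$ and $g_2$ themselves separate) with $g_1x_{\alpha_0}=c$ and $g_1x_{\alpha_0'}=c'$, and the cuts $\leq_c$ and $\leq_{c'}$ may order the pair $g_1x_{\beta},\,g_2x_{\beta}$ oppositely: in $C_4$ one has $1<_0 3$ but $3<_2 1$. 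Then $g_1$ precedes $g_2$ inside one triple and follows it inside another, and transitivity --- the axiom you correctly single out as delicate but do not verify --- fails. The underlying obstruction is that circular orders cannot be iterated lexicographically over a well-ordered index set; the construction of Definition \ref{d:lexic} tolerates a circular order only on the base and \emph{linear} orders on the fibres, applied once.

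The paper's route through Corollary \ref{t:S_+} repairs exactly this point. Fix a single $a\in X$ and let $H=St(a)$. Since $[a,hx,hy]$ whenever $[a,x,y]$ and $h\in H$, the stabilizer acts effectively on $X$ preserving the \emph{linear} cut $\leq_a$, so the linear criterion (Theorem \ref{t:LinCase}) gives a left-invariant linear order $\prec_H$ on $H$, which is then transported to each coset $gH$. A single application of Lemma \ref{l:mainTechn} to the orbit map $q\colon G\to Ga$ --- circular order on the base $Ga\subset X$, one globally fixed linear order on each fibre $gH$ --- yields the left-invariant circular order; all tie-breaking happens inside one coset with one linear order, which is why transitivity survives. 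If you wish to keep the flavour of your construction, replace ``first index where the triple disagrees'' by the single base point $a$, and break ties within a coset of $St(a)$ by the dynamically lexicographic linear order derived from $\leq_a$; that is essentially the paper's argument.
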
 
In the assertion (2), in addition, we can suppose that $\dim K=0$.
\begin{proof}
	(1) $\Rightarrow$ (2)  
	Let $(G,R)$ be a c-ordered group. By Theorem \ref{t:lim}, the c-compactification $\nu \colon G \to K=G_{\infty}$ is a c-order-preserving proper $G$-compactification which induces a topological embedding of (discrete) $G$ into $H_+(K)$.  
	
(2) $\Rightarrow$ (3) Trivial. 

(3) $\Rightarrow$ (1) Apply Corollary \ref{t:S_+} below.  
\end{proof}

%%%
The idea of the following definition comes from c-ordered lexicographic products. In order to compare with Definition \ref{d:lexic}), consider the projection  $C \otimes L \to C$ as a particular case of Definition \ref{d:MainDef}.  This construction can be found in \cite[Def. II.27, p. 72]{DK15}. 

\begin{defin} \label{d:MainDef}  
Let $(Y,R_Y)$ be a circularly ordered set and $q \colon X \to Y$ be an onto map such that for every fiber $X_y:=q^{-1}(y)$ we have a linear order $L_y=\leq_y$. 
Then  we have a canonical circular
order $R_X$ on $X$ (as we show below) defined as follows:  
$[x_1,x_2,x_3]$ if one of the following conditions is satisfied:
\ben 
\item $[q(x_1),q(x_2),q(x_3)]$; 
\item $q(x_1)=q(x_2) \neq q(x_3)$, \ $x_1 < x_2$;
\item $q(x_2)=q(x_3) \neq q(x_1)$, \  $x_2 < x_3$;
\item $q(x_1)=q(x_3) \neq q(x_2)$, \ $x_3 < x_1$;

\item $q(x_1)=q(x_2)=q(x_3), \ [x_1,x_2,x_3]_{L_y}$. 
\een
\end{defin}

\begin{lem} \label{l:mainTechn} 
	For $R_X$ defined in Definition \ref{d:MainDef}  we have:  
%Let $(Y,R_Y)$ be a circularly ordered set and $q \colon X \to Y$ is an onto map such that for every fiber $X_y:=q^{-1}(y)$ we have a linear order $L_y=\leq_y$. Then
\bit
\item [(a)] 
 $R_X$ is a circular order on $X$.  
\item [(b)] $R_X$ is compatible with $L_y$ on every $X_y$, $y \in Y$. 
\item [(c)] $q \colon X \to Y$ is c-order preserving.  
\item [(d)] 
Let $G$ act on $X$ and $Y$ such that $q \colon X \to Y$ is a $G$-map, 
$G \times Y \to Y$ is c-order preserving and $gx_1 < gx_2$ for every $x_1 < x_2$ in $X_y$, every $y \in Y$ and $g \in G$. Then the action $G \times X \to X$ is c-order preserving. 
\eit 
\end{lem}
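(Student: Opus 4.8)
The plan is to establish (a) by checking directly the four axioms of Definition \ref{newC} for $R_X$, and then to read off (b)--(d) from the defining clauses together with (a).

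For cyclicity, asymmetry and totality I would argue by a short case split on how many of the three images $q(x_1),q(x_2),q(x_3)$ coincide. Cyclicity is the cleanest: passing from $(x_1,x_2,x_3)$ to $(x_2,x_3,x_1)$ fixes clause (1) (using cyclicity of $R_Y$) and clause (5) (using cyclicity of the induced circular order $\circ_{\leq_y}$ on the fiber, see Remark \ref{r:chech}.1), while it permutes the three ``mixed'' clauses cyclically, namely $(2)\to(4)\to(3)\to(2)$; each of these three implications is verified in one line by matching the coincidence pattern and the fiber inequality. For asymmetry and totality I would split into the three cases: all of $q(x_1),q(x_2),q(x_3)$ distinct (apply the corresponding axiom of $R_Y$ through clause (1)); exactly two equal (apply antisymmetry, resp.\ totality, of the fiber linear order $\leq_y$, tracking which of clauses (2)--(4) fires); all three equal (apply the corresponding axiom of $\circ_{\leq_y}$ through clause (5)).

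The main work, and the only genuinely laborious point, is transitivity: from $[a,b,c]$ and $[a,c,d]$ deduce $[a,b,d]$. I would exploit that transitivity \emph{based at} $a$ is, by definition of $R_X$, exactly transitivity of the binary relation $u <_a v$ given by $[a,u,v]$ on $X\setminus\{a\}$. Setting $y_0=q(a)$ and passing to the cut $\leq_{y_0}$ on $Y$ (a linear order with least element $y_0$ by Remark \ref{r:chech}.2 and Lemma \ref{l:cut1}.2), I would identify $<_a$ with the explicit lexicographic linear order whose blocks are: first the elements of $X_{y_0}$ above $a$ in $\leq_{y_0}$-order; then the other fibers $X_{y_1}$ ordered by $y_1$ through $\leq_{y_0}$ and internally by $\leq_{y_1}$; and last the elements of $X_{y_0}$ below $a$ in $\leq_{y_0}$-order. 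The verification that $<_a$ really equals this order is a clause-by-clause check (elements of $X_{y_0}$ above/below $a$ trigger clauses (2)/(4), same other fiber triggers (3), distinct other fibers triggers (1), and $X_{y_0}$-internal comparison is clause (5) cut at $a$). Since a lexicographic composite of linear orders is a linear order, $<_a$ is transitive, which is the required instance. Conceptually this is just the statement that $R_X$ blows up each $y$ into the linearly ordered fiber $X_y$ inserted in place of $y$ respecting orientation, the fibrewise version of the lexicographic product of Definition \ref{d:lexic}; I expect the enumeration of coincidence patterns here to be the one place needing care.

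Granting (a), the remaining items are quick. For (b), three points of a single fiber $X_y$ can satisfy none of clauses (1)--(4) (each forces a $q$-distinctness), so $R_X$ restricted to $X_y$ is governed solely by clause (5) and therefore equals $\circ_{\leq_y}$, which is the asserted compatibility. For (c), if $q(a),q(b),q(c)$ are distinct then $[a,b,c]$ can only arise from clause (1), giving $[q(a),q(b),q(c)]$, i.e.\ condition (1) of Definition \ref{d:c-ordMaps}; and each fiber is convex in the sense of Definition \ref{d:c-convex}, since if $a,b\in X_y$ with $b<_y a$ then checking the five clauses shows $[b,x,a]$ fails for every $x\notin X_y$, whence $(b,a)_\circ\subseteq X_y$ and so $[b,a]_\circ\subseteq X_y$, which is condition (2). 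Thus $q$ is COP. Finally for (d), using that each $g$ acts as a circular automorphism of $Y$ commuting with $q$ and preserving every fiber order (hence every $\circ_{\leq_y}$), I would verify clause by clause that $[x_1,x_2,x_3]$ forces $[gx_1,gx_2,gx_3]$: clause (1) is preserved because $g$ preserves $R_Y$ on distinct images, clauses (2)--(4) because $g$ preserves both the $q$-coincidence pattern and the fiber inequalities, and clause (5) because $g$ preserves the fibrewise circular order.
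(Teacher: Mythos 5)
Your proof is correct. Parts (b), (c) and (d) coincide with the paper's (essentially one-line) arguments; in particular your treatment of (c) --- clause (1) of Definition \ref{d:MainDef} yields condition (1) of Definition \ref{d:c-ordMaps}, and convexity of the fibers yields condition (2) --- is exactly the paper's. The genuine difference is in the Transitivity axiom of (a), the only step the paper flags as nontrivial and whose case analysis it omits. Where the paper (implicitly) enumerates the coincidence patterns of the four $q$-images across the five defining clauses, you reduce the axiom to linearity of the cut relation $u <_a v$ given by $[a,u,v]$ on $X \setminus \{a\}$, and identify that relation, clause by clause, with a concatenation of three linearly ordered blocks: the part of $X_{q(a)}$ above $a$, then the remaining fibers arranged lexicographically along the cut $\leq_{q(a)}$ of $Y$, then the part of $X_{q(a)}$ below $a$. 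This identification does check out (clauses (2) and (4) place $X_{q(a)}$ at the two ends, clause (1) orders distinct foreign fibers via $\leq_{q(a)}$, clause (3) orders inside a single foreign fiber, and clause (5) is the cut at $a$ of the fiber's own induced circular order), and the same bookkeeping makes asymmetry and totality essentially free. The trade-off: the paper's route needs no auxiliary notion but leaves a long unstructured verification; yours replaces it by a short structured list of checks and explains conceptually why $R_X$ is a circular order (it is a fibrewise lexicographic blow-up of $Y$, linearized at any base point). The one point requiring care --- which you do flag --- is that the identification must be an equality of relations rather than a mere inclusion of $<_a$ into a linear order, since transitivity is inherited only through equality.
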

\begin{proof}
	(a) The Cyclicity, Asymmetry and Totality axioms are trivial to verify.  
	 Transitivity is straightforward though nontrivial because there are %25 
	 many cases to check. 
	\sk 
	(b) Directly follows from the definitions. 
	\sk 
	
	(c) Let $[x_1,x_2,x_3]$ and $q(x_1),q(x_2),q(x_3)$ be distinct. Then $[q(x_1),q(x_2),q(x_3)]$ by Definition \ref{d:MainDef}.  
	
	If $q(x_1)=q(x_2)$, then $x_1,x_2$ both are elements of the linearly ordered subset $X_y$, where $y=q(x_1)$. Let $x_1 < x_2$. Then the interval $[x_1,x_2]_R$ is just the usual interval $\{x \in X: x_1 < x < x_2\}$ in $X_y$. Then $q$ is constant on this interval, $q([x_1,x_2]_R))=y$. So, $q$ is c-order preserving (by Definition \ref{d:c-ordMaps}).  
	
	\sk 
	(d) It is straightforward. 
	\end{proof}

%
%\begin{cor} \label{c:normal} (D. Calegari \cite[Lemma 2.2.12]{Calegari04}) 
%	Suppose 
%	$$
%	0 \longrightarrow H \longrightarrow G \longrightarrow Q \longrightarrow 0
%	$$  
%is a short exact sequence, where $K$ is left ordered and $Q$ is circularly ordered. Then $G$ can be circularly ordered in such a way that the inclusion of $H$ into $G$ respects the order on the cut $(G,\leq_g)$ for every $g$ not in $H$ and the map $G \to Q$ is c-order preserving.  
%\end{cor}
%\begin{proof}  Apply Corollary \ref{t:H} in the particular case where $H \subset G$ is a normal subgroup.  
%	\end{proof} 

\sk 

\begin{cor} \label{t:S_+} \emph{(Zheleva \cite{Zheleva97})}
	Let $(X,\circ)$ be a c-ordered set and $G \times X \to X$ is an effective c-order preserving action. Then the group $G$ (e.g., $\Aut(X,\circ)$) admits a left invariant c-order. 
\end{cor}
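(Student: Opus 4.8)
The plan is to manufacture a left-invariant circular order on $G$ by writing the group itself as a fibration over one of its orbits, and then invoking Definition \ref{d:MainDef} together with Lemma \ref{l:mainTechn}(d), which are tailored exactly for this purpose. Fix a point $x_0 \in X$, let $H := \{g \in G : gx_0 = x_0\}$ be its stabilizer, and consider the orbit map $q \colon G \to Gx_0$, $g \mapsto gx_0$, where the orbit $Y := Gx_0 \subseteq X$ inherits the circular order of $(X,\circ)$. This $q$ is onto; it is $G$-equivariant, since $q(kg) = kgx_0 = k\cdot q(g)$; the left action of $G$ on $Y$ is c-order preserving, being a restriction of the c-order preserving action on $X$; and the fibers $q^{-1}(gx_0) = gH$ are precisely the left cosets of $H$. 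Thus $q$ is set up to play the role of the projection in Definition \ref{d:MainDef}, and it remains only to linearly order the fibers in a way compatible with left translation.

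To order $H$, I would use the standard cut at $z = x_0$ from Remark \ref{r:chech}.2: the relation $\le_{x_0}$ is a linear order on $X$ with least element $x_0$, and every $h \in H$ preserves it, because $hx_0 = x_0$ turns $[x_0,a,b]$ into $[x_0,ha,hb]$. Hence $H$, acting effectively (as a subgroup of the effectively acting $G$) by order-preserving bijections, embeds into $\Aut(X,\le_{x_0})$, so by the linear case (Theorem \ref{t:LinCase}, (3) $\Rightarrow$ (1)) it carries a left-invariant linear order $\le_H$.

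Next I would spread $\le_H$ over all fibers by declaring, on the coset $gH$, that $gh_1 \prec gh_2$ iff $h_1 <_H h_2$. This is independent of the representative: if $gH = g'H$ with $g' = gh_0$, then left-invariance of $\le_H$ gives $h_0^{-1}h_1 <_H h_0^{-1}h_2 \Leftrightarrow h_1 <_H h_2$, so both representatives induce the same linear order on the coset, each fiber becoming a copy of $(H,\le_H)$. This is exactly the input demanded by Definition \ref{d:MainDef}, which then produces a circular order $R_G$ on the underlying set of $G$.

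Finally I would check the hypotheses of Lemma \ref{l:mainTechn}(d), taking $X := G$ with its left-translation action and $Y := Gx_0$ with the map $q$ above. Equivariance of $q$ and c-order preservation on $Y$ were already recorded, so only the fiber-order invariance remains: if $gh_1 \prec gh_2$ in $gH$ then, using the representative $kg$ for the coset $(kg)H$, the relation $k(gh_1) \prec k(gh_2)$ is by definition equivalent to $h_1 <_H h_2$, which holds. Lemma \ref{l:mainTechn}(d) now yields that left translation on $(G,R_G)$ is c-order preserving, i.e. $R_G$ is a left-invariant circular order, proving the claim. (When $Gx_0 = \{x_0\}$ the construction collapses to case (5) of Definition \ref{d:MainDef}: here $G = H$, and the circular order induced by the left-invariant $\le_H$ is already left-invariant by Remark \ref{r:chech}.1.) The one step needing genuine care is this transport of $\le_H$ across cosets: its well-definedness, and the fact that the resulting family satisfies the precise invariance required by Lemma \ref{l:mainTechn}(d), are exactly where left-invariance of $\le_H$ is consumed; everything else is bookkeeping once $q$ is recognized as the projection of Definition \ref{d:MainDef}.
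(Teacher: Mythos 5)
Your proposal is correct and follows essentially the same route as the paper: fix a point, linearly order the stabilizer via the standard cut $\leq_{x_0}$ and Theorem \ref{t:LinCase}, transport that order to the cosets (using left-invariance for well-definedness), and apply Lemma \ref{l:mainTechn} to the orbit map $q\colon G \to Gx_0$. The only additions are your explicit verification of the hypotheses of Lemma \ref{l:mainTechn}(d) and the remark on the degenerate orbit, both of which are consistent with the paper's argument.
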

\begin{proof}
	Take any $a \in X$ and define the linearly ordered set $(X,\leq_a)$ (as in Remark \ref{r:chech}). Let $H:=St(a) \subset G$ be the stabilizer subgroup of $a$. 
	Then we have the restricted action $H \times X \to X$. Which, clearly, is also \textit{effective}. This action preserves the order $\leq_a$. Indeed, if $[a,x,y]$ then $[ga,gx,gy]$ for every $g \in G$. In addition for every $h \in H$ we have 
	$ha=a$. Therefore, $[a,hx,hy]$. This means that $hx <_a hy$ for every $x<_ay$. 
	By Theorem \ref{t:LinCase}, $H$ admits a natural left invariant linear order $\prec_H$. 
	
	Now consider the orbit $G$-map
	$$q \colon G \to Ga=G/H, \ g \mapsto ga.$$ 
	The action of $G$ on $Ga$ is c-order preserving and it is $G$-equivalent to the usual left action of $G$ on $G/H$. 
		On every coset $gH$ define the induced linear order 
	$$
	gh_1 \prec gh_2 \equiv h_1 \prec_H h_2. 
	$$ 
	Since the linear order on $H$ is left invariant, it is easy to see that 
	this binary relation is well defined and does not depend on the choice of $a \in G$ with $gH=aH$.  Clearly, it defines a linear order on $gH$ which coincides with $\prec_H$ on $H$. Note that $q^{-1}(ga)=	gH$. 
	Now we can apply Lemma \ref{l:mainTechn} to the onto orbit map 
	$q \colon G \to Ga$. 
	%	 as follows:
	%	$$
	%	g_1 <_a g_2 \leftrightarrow [a, g_1a, g_2a]
	%	$$ 	
\end{proof}

The proof of Corollary \ref{t:S_+} can be easily adopted in order to prove 
%(and vice versa) 
the following sufficient condition of c-orderability. 

\begin{cor} \label{t:H} \emph{(H. Baik and E. Samperton \cite[Lemma 4.4]{BS})} 
	Let $X$ be a set which admits a $G$-invariant circular order with respect to a given action $G \times X \to X$  of a group $G$. Suppose that the stabilizer subgroup $St(a)$ of a point $a \in X$ is left linearly ordered. Then $G$ has a left invariant circular order such that $G \to X, g \mapsto ga$ is c-order preserving.  
	%	
	%	 such that  Then $G$ admits a left invariant c-order which extends the given order on $H$ and the function  such that the inclusio
	%	
	%	Let $H$ be a subgroup of $G$ such that: 
	%	$H$ admits a left invariant linear order, the set $G/H$ (of all left cosets $\{gH: g \in G\}$) admits a c-order such that the action of $G$ on  $G/H$ is c-order preserving. 
\end{cor}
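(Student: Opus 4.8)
The plan is to follow the proof of Corollary \ref{t:S_+} almost verbatim, the single modification being that the left invariant linear order on the point stabilizer — which there is produced from effectiveness via Theorem \ref{t:LinCase} — is now handed to us by hypothesis, so that effectiveness is no longer needed at all. First I would fix the point $a \in X$, put $H := St(a)$, and observe that the orbit $Ga \subseteq X$ inherits a $G$-invariant circular order from $(X,\circ)$, and that the inclusion $Ga \hookrightarrow X$ is c-order preserving. The orbit map
$$q \colon G \to Ga, \qquad g \mapsto ga$$
is then $G$-equivariant for left multiplication on $G$ and the natural action on $Ga$, and its fibers are precisely the left cosets $gH = q^{-1}(ga)$.

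Next I would transport the given left invariant order $\prec_H$ on $H$ to each fiber by declaring $gh_1 \prec gh_2 \iff h_1 \prec_H h_2$. The one point to check is independence of the chosen representative: if $gH = g'H$ with $g' = gh_0$, then independence is exactly the equivalence $h_1 \prec_H h_2 \iff h_0^{-1}h_1 \prec_H h_0^{-1}h_2$, i.e.\ the left invariance of $\prec_H$. This is the only place the hypothesis on $St(a)$ is used, and it replaces both effectiveness and the appeal to Theorem \ref{t:LinCase} in the proof of Corollary \ref{t:S_+}.

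Finally I would invoke Lemma \ref{l:mainTechn} for the onto map $q \colon G \to Ga$, taking $Ga$ (with its restricted circular order) as the base and the cosets (with the linear orders just defined) as the fibers. The hypothesis of part (d) is immediate: for $x_1 \prec x_2$ in one coset, say $x_i = g_0 h_i$ with $h_1 \prec_H h_2$, left multiplication by $g$ sends them into the coset $(gg_0)H$, where $gg_0 h_1 \prec gg_0 h_2$ iff $h_1 \prec_H h_2$, so monotonicity on fibers holds automatically. Part (d) then yields a left invariant circular order $R_G$ on $G$ (left multiplication being c-order preserving), part (c) gives that $q$ is c-order preserving, and composing with the c-order-preserving inclusion $Ga \hookrightarrow X$ shows $g \mapsto ga$ is c-order preserving into $X$, as claimed. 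I expect no real obstacle here: the genuinely nontrivial content — verifying the circular order axioms, in particular transitivity — is entirely absorbed into Lemma \ref{l:mainTechn}(a), which is already available, so that what remains is the one-line well-definedness check above together with the equally short monotonicity verification.
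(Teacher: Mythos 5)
Your proposal is correct and is exactly the adaptation the paper has in mind: the paper proves Corollary \ref{t:H} only by remarking that the proof of Corollary \ref{t:S_+} ``can be easily adopted,'' and your argument carries out precisely that adaptation, replacing the appeal to effectiveness and Theorem \ref{t:LinCase} by the hypothesized left invariant order on $St(a)$ and then invoking Lemma \ref{l:mainTechn} for the orbit map $q\colon G \to Ga$. The well-definedness check on cosets and the fiberwise monotonicity verification are exactly the points that need to be (and are) addressed.
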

%\begin{proof}
%	
%\end{proof}

This result, in turn, implies another sufficient condition (in terms of short exact sequences)  due to D. Calegari \cite[Lemma 2.2.12]{Calegari04}.

\sk 
\subsection*{Circularly ordered enveloping semigroup compactifications} \label{s:c-env} 

\begin{defin} \label {d:cStrongOrderly} 
	Let $S$ be a %compact right topological (in short: crt) 
	crt semigroup. We say that $S$ is a \textit{c-ordered crt-semigroup} if there exists a bi-invariant  c-order on $S$ such that the circular topology is just the given topology. A crt-semigroup compactification $\g \colon G \to S$ of a topological group $G$ with a bi-invariant c-order is an \textit{c-ordered semigroup compactification} if $S$ is a c-ordered crt-semigroup such that $\g$ is a c-order compactification.  

We say that $G$ is \textit{dynamically c-orderable} if it admits a \textit{proper} c-order semigroup compactification  (i.e., $\g$ is a topological embedding and c-order embedding).  
\end{defin}

\begin{example} \label{e:exS} \ 
	\ben 
	\item \cite[Cor. 6.5]{GM-c} (Sturmian-like systems) 
	
	\nt For every irrational $\alpha \in \R$ and every $R_\a$-invariant subset 
	$A \subset \T$, one may define a circularly ordered cascade ($\Z$-system) $\T_A$ which we get from the circle $\T$ by replacing any point $a \in A$ by the ordered pair $(a^{-}, a^{+})$.  
	In this case, the enveloping semigroup of $\T_A$ %$\Sp(\T, R_\alpha; A)$ 
	is the circularly ordered cascade $\T_{\T} \cup \Z$. It contains the double-circle $\T_{\T}$ 
	(\emph{c-ordered lexicographic product} $\T \times \{-,+\}$) as its unique minimal left ideal; or equivalently as its unique minimal subset.

	\item \cite[Example 14.10]{GM1} Consider in more detail a special case of (1),  
	when $A$ is a subgroup of $\Z$, generated by an irrational $\al \in \T$.  
	For convenience we will use the notation $\beta^{\pm}$ ($\beta \in \T$) for points of $K:=\T_A$, where $\beta^-=\beta^+$ for every $\beta \in \T \setminus A$. We have a  homeomorphism 
	$$\sigma \colon \T_A \to \T_A, \ \sigma (\beta^{\pm})=(\beta + \al)^{\pm}$$
	which defines a circularly ordered $\Z$-system $\T_A$.

	Then the corresponding enveloping semigroup 
	$E= E(\T_A)$ can be identified with the 
	disjoint union 
	$$ \T_{\T} \cup \{\sigma^n : n \in \Z\},$$ where $(\T_{\T},\sigma)$ is the Ellis' {\em double circle} cascade:
	$\T_{\T} = \{\beta^{\pm} : \beta \in \T = [0,1)\}$ and $\sigma \circ \beta^{\pm} = (\beta + \al)^{\pm}$. 
	One may show that 
	$E$ becomes a circularly ordered semigroup, where $E=\T_{\T} \cup \Z$ is a c-ordered subset of 
	the c-ordered lexicographic order $\T \times \{-,0,+\}$. 
	Under this definition for every $n \in \Z$ we have	$[n\al^-, \sigma^n, n\al^+]$. 
	Since the interval $(n \alpha^+, n \alpha^-) \subset E$ contains only the single element $\sigma^{n}$ for every  $n \alpha \in G=\Z$, we get that every element of $G=j(G)$ is isolated in $E$. So, in this case $E =\T_{\T} \cup \Z$, where each point of $\Z$ is isolated in $E$. 
	
	For every $\g \in \T$, define the following self-maps: 
	$$
	p_{\g}^+ \colon \T_A \to \T_A, \ p_{\g}^+(\beta^{\pm})=(\beta+\g)^+,
	$$
	$$ 
	\ p_{\g}^- \colon \T_A \to \T_A, \ p_{\g}^-(\beta^{\pm})=(\beta+\g)^-. 
	$$
	Then $E(\T_A) = \{p_{\g}^{\pm} \colon \g \in \T\} \cup \Z.$ 
	It is straightforward to show that $E(\T_A)$ is an ordered semigroup. In particular, observe that the left and right translations on $E(\T_A)$ are circular order-preserving maps (in the sense of Definition \ref{d:c-ordMaps}). 
	\een
\end{example}

\begin{thm} \label{t:c-Case} 
	Let $(G,\tau)$ be an abstract discrete group. 
	The following are equivalent:
	\ben 
	\item $G$ is circularly orderable;  
%	\item There exist: a circular order $<_G$ on $G$, a circularly ordered set $(X,\circ)$ and a circular order preserving group embedding 
%	$h: G \to \Aut(X,\circ)$ such that $[g_1x,g_2x, g_3x]$ for every $[g_1,g_2, g_3]$ and $x \in X$. 
	\item $G$ is dynamically c-orderable 
	
\nt	(i.e., $G$ admits a c-order proper semigroup compactification $\g \colon G \hookrightarrow S$).  
	%which is a topological embedding  of the discrete group $G$).  
	
	\een
\end{thm}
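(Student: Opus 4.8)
For (2)$\Rightarrow$(1), suppose $\g\colon G\hookrightarrow S$ is a proper c-order semigroup compactification. By Definition \ref{d:cStrongOrderly} the compact semigroup $S$ carries a \emph{bi-invariant} circular order and $\g$ is a c-order embedding. Restricting this order to the subgroup $\g(G)\cong G$ produces a circular order on $G$ that is invariant under both left and right translations (these being restrictions of the corresponding translations of $S$). Hence $G$ is circularly orderable, with no topology needed.

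\textbf{Reduction via Swierczkowski.} For (1)$\Rightarrow$(2), assume $G$ is circularly orderable. The plan is to avoid the (open) circular analogue of Theorem \ref{t:env} by reducing to the linear case. First I would invoke Swierczkowski's theorem \cite{Scw} to embed $G$, as a cyclically ordered group, into a lexicographic product $\T\otimes L$, where $\T$ is the circle group with its bi-invariant circular order and $L$ is a linearly ordered group whose order is bi-invariant (so that $\T\otimes L$ is genuinely cyclically ordered). Since $L$ is bi-invariantly linearly ordered, Theorem \ref{t:bi-Case} supplies a proper linearly ordered crt-semigroup compactification $\eta\colon L\hookrightarrow S_L$, realized as the Ellis semigroup of a compact linearly ordered $L$-system (Theorem \ref{t:env}); in particular $\eta(L)$ is discrete in $S_L$ and consists of invertible elements lying in the topological center. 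I would then form $S_0:=\T\otimes S_L$ with componentwise multiplication and the lexicographic c-order of Definition \ref{d:lexic}. By Proposition \ref{p:LexComp} the space $S_0$ is a compact c-ordered space, and, both factor orders being bi-invariant, the lexicographic c-order on $S_0$ is bi-invariant under componentwise multiplication.

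\textbf{The candidate compactification and routine checks.} I would set $S:=\ov{\g(G)}$, the closure of the image of the composite c-order embedding $\g\colon G\hookrightarrow \T\otimes L\hookrightarrow S_0$. Three points are essentially routine. (a) \emph{Topological center:} each $\g(g)=(a,\eta(\ell))$ acts by left multiplication as a c-order automorphism of $S_0$ — since $\eta(\ell)$ is invertible and order-preserving on $S_L$, the map $\lambda_{(a,\eta(\ell))}$ is an order isomorphism, hence a homeomorphism of the circular topology — so $\g(G)\subseteq\Lambda(S_0)$. (b) \emph{$S$ is a subsemigroup:} granted that $S_0$ is right topological and $\g(G)\subseteq\Lambda(S_0)$, the standard closure argument applies (first close $\ov{\g(G)}$ under left multiplication by elements of $\g(G)$ using continuity of $\lambda_{\g(g)}$, then under right multiplication by arbitrary elements using right topologicality). (c) \emph{Topology matches the order:} as a closed subset of the compact c-ordered $S_0$, the set $S$ has no gaps in the restricted c-order (Theorem \ref{t:c-comp}), so its Hausdorff circular topology, being coarser than the compact subspace topology, coincides with it. Finally, $\g$ is a \emph{proper} embedding: $\eta(\ell)$ is isolated in the compact linearly ordered $S_L$ and therefore has an immediate predecessor and successor, and the corresponding lexicographic interval isolates $(a,\eta(\ell))$ entirely inside its own fiber, making $\g(G)$ discrete; together with the c-order embedding this yields a proper c-order compactification, as required.

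\textbf{The main obstacle.} The delicate step is the right topologicality of $S_0$, and specifically the continuity of each right translation $\rho_{(a,s)}\colon (b,t)\mapsto(ba,ts)$ at the ``seams'' of the lexicographic product, i.e.\ at the points $(b_0,\top)$ and $(b_0,\bot)$ with $\top=\sup S_L$, $\bot=\inf S_L$. A net approaching $(b_0,\top)$ through the neighbouring fibers (with base coordinate descending to $b_0$) must map to a net converging to $\rho_{(a,s)}(b_0,\top)=(b_0a,\top s)$; since those images keep a base coordinate strictly above $b_0a$, convergence can only hold if the target is itself a seam point, which forces $\top s=\top$ (and symmetrically $\bot s=\bot$) for every $s\in S_L$. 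This is exactly where the Ellis realization of $S_L$ is indispensable: in $E(K_L)$ the extreme elements $\top,\bot$ are the constant maps onto the $L$-fixed maximum and minimum of $K_L$, hence are left zeros and absorb on the left, which restores continuity across the seams. I expect this seam analysis — verifying that $S_0$ is right topological — to be the main technical burden of the proof; the remaining items are formal consequences of the constructions already available in the paper.
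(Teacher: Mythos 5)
Your route is the same as the paper's: Swierczkowski's embedding $G\hookrightarrow\T\otimes_c L$, a linearly ordered crt-compactification $L\hookrightarrow P$ supplied by Theorem \ref{t:bi-Case}, the lexicographic c-product $\T\otimes_c P$ (compact by Proposition \ref{p:LexComp}), and $S:=\overline{\g(G)}$. You have also correctly isolated the one step the paper dismisses as routine, namely continuity of right translations at the seam points $(b,\top)$ and $(b,\bot)$, and you correctly derive that it forces $\top s=\top$ and $\bot s=\bot$ for every $s$. The gap is that your justification of this identity is false. Every element of $E(K_L)$ is a pointwise limit of order-preserving homeomorphisms of $K_L$, all of which fix $\min K_L$ and $\max K_L$; hence every $p\in E(K_L)$ fixes these two points, and $\top,\bot$ are \emph{not} the constant maps. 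Worse, the identity you need actually fails: each $\lambda_g$ ($g\in L$) is an order automorphism of $E(K_L)$ onto itself, so $g\bot=\bot$ for every $g\in L$, and passing to the limit along a net $g_i\to\top$ and using right topologicality gives $\top\bot=\lim g_i\bot=\bot\neq\top$ whenever $L$ is nontrivial. The same computation works in \emph{any} linearly ordered crt-compactification of a nontrivial bi-ordered group, so no alternative choice of $P$ rescues the componentwise product.

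The discontinuity is therefore real, not merely a worry to be checked. Take $G=\Z^2$ with the circular order pulled back along $(n,m)\mapsto(n\alpha,m)\in\T\otimes_c\Z$, $\alpha$ irrational, and let $P=E(K_\Z)$ for the two-point compactification $K_\Z$ of $\Z$, so that $P=\Z\cup\{\top,\bot\}$ with $\top\bot=\bot$. Then $S=\overline{\g(G)}$ contains $(\beta,\top)$ and $(\gamma,\bot)$ for every $\beta,\gamma\in\T$; a net $\g(n_i,m_i)=(n_i\alpha,m_i)$ with $n_i\alpha\to\beta$ strictly from the positive side converges to $(\beta,\top)$, while its image under $\rho_{(\gamma,\bot)}$, namely $(n_i\alpha+\gamma,\bot)$, converges to $(\beta+\gamma,\top)$ and not to $(\beta,\top)\cdot(\gamma,\bot)=(\beta+\gamma,\bot)$. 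Example \ref{e:exS}.2 shows what right topologicality actually demands at the seams: the $\pm$-component of the \emph{left} factor must prevail ($p_\beta^+p_\gamma^-=p_{\beta+\gamma}^+$), which is not the componentwise product. So to complete the argument one must replace the componentwise multiplication on $S$ by the extension of the multiplication of $\g(G)$ by right continuity and then re-verify associativity, well-definedness and bi-invariance of the order --- essentially the content of the open circular analogue of Theorem \ref{t:env} (Question \ref{q:c-version}). As written, your items (b) and (d) and your resolution of the ``main obstacle'' do not hold; I note that the paper's own step (d) asserts the same claim without this analysis, so your instinct to single it out was sound.
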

\begin{proof}
	(1) $\Rightarrow$ (2) Let $R$ be a bi-invariant circular order on $G$. By a classical result of Scwierczkowski \cite{Scw}, there exists a group embedding $i\colon G \hookrightarrow \T \otimes_c L$, into the lexicographic c-product of groups, where $\T$ is the usual c-ordered circle group and $L$ is an ordered group. By Theorem \ref{t:bi-Case}, there exists a linear order semigroup compactification $\g\colon L \hookrightarrow P$. Consider the lexicographic product on the semigroup $\T \otimes_c P$. 
	Now observe:
	
	\ben 
	\item [(a)] $\T \otimes_c P$ is compact in the circular topology of the lexicographic c-order by Proposition \ref{p:LexComp}.

	\item [(b)]  For every $u=(t,p) \in \T \otimes_c P$, the left and right translations $\lambda_{u} \colon \T \otimes_c P \to \T \otimes_c P$ and $\rho_{u} \colon \T \otimes_c P \to \T \otimes_c P$ are c-order preserving. 
	The verification is straightforward using Definition \ref{d:lexic}. 
		\item [(c)] For every $g \in G$, the left translations $\lambda_{i(g)}$ (and its inverse) $\lambda_{i(g^{-1})}$ are both circular order-preserving self-maps of $\T \otimes_c P$. Hence, such translations are homeomorphisms because $\T \otimes_c P$ carries the circular topology. Therefore, $i(G)$ is a subset  
		% $\T \otimes_c L$ is 
		of the topological centre of $\T \otimes_c P$. 
		
		\item [(d)]  $\T \otimes_c P$ is a right topological semigroup. 
		Again, we use Definition \ref{d:lexic} in order to show that the preimage of every interval under any given right translation $\rho_{(t_0,p_0)}$ is open. %It is important that $\T$ is a group. 
	\een 
	Finally, define $S$ as the closure of $i(G)$ in $\T \otimes_c P$. Then $S$ is a compact right topological semigroup, and the subspace (compact) topology coincides with the circular topology under the inherited circular order (indeed, the Hausdorff circular topology is weaker than the compact subspace topology). Moreover, the induced embedding $\g \colon G \hookrightarrow S=cl(i(G))$ is a c-order semigroup compactification. 
	\sk 
	
	(2) $\Rightarrow$ (1) is trivial. 
\end{proof}
 
% \begin{prop}
% 	The topological completion of a c-orderly (orderly)  topological group is c-orderly (orderly). 
% \end{prop}
%\begin{proof}
%	Let a topological group $(G,\tau)$ is c-orderly. Then there exists a compact circularly ordered space $(K,\circ)$ such that $G$ is a topological subgroup of $H_+(K)$. It is well known that $H(K)$ is Raikov complete. Since the circular 
%\end{proof}
%%%%%%%%%%%%%%%%%%%%

%\begin{example}
%	There exists a topological group $(G,\tau)$ which is not c-orderly but the abstract discrete group $G$ is c-orderly. 
%\end{example}
%\begin{proof}
%	Indeed, 
%	\end{proof}

%\begin{thm} \label{t:env-c} 
%	Let $G$ be a topological group with a c-order $R_G$ and  
%	$(K,R_K)$ be a c-ordered compact $G$-system such that the every orbit map $\tilde{z} \colon G \to X, g \mapsto gz$ is c-order preserving.
%	%$g_1 \leq_G g_2$ if and only if $g_1x \leq g_2x$ for every $x \in K$. 
%	Then %$G$ is dynamically orderable. More precisely, 
%	the Ellis  semigroup $E(K)$ is a c-ordered semigroup and the Ellis compactification $j \colon G \to E(K)$ %an order compactification.   
%	is a c-ordered semigroup compactification. 
%\end{thm}

\sk 
Every (c-)ordered compact $G$-space $K$ is \textit{tame} in the sense of A. K\"{o}hler \cite{Ko} \textit{(regular}, in the original terminology).  
If $K$ is metrizable then it is equivalent to say that the enveloping semigroup $E(K)$ is a separable Rosenthal compact space (see \cite{GM-survey,GM-c}). 

\sk 
Deep results of Todor\u{c}evi\'{c} \cite{T} and Argyros--Dodos--Kanellopoulos  \cite[Section 4.3.7]{A-D-K})
about separable Rosenthal compacta, lead to a hierarchy of tame metric dynamical systems (see \cite{GM-TC}) according to topological properties of corresponding enveloping semigroups. In view of this hierarchy we ask the following

\begin{question} \label{q:SmallEnv2} 
	Which (c-)orderly topological groups $G$ admit an effective (c-)ordered continuous action 
	on a compact metrizable space $K$ such that the Ellis compactification $G \hookrightarrow E(K)$ is a topological embedding and the enveloping semigroup $E(K)$ 
	is: 
	a) metrizable? b) hereditarily separable? c) first countable? 
\end{question}

\sk \sk 
\section{Representations on Banach spaces} \label{s:repres} 

Banach representations of dynamical systems probably can provide an interesting direction for estimating the complexity of orderable groups.  
%(even for the discrete case). 
Recall that according to results of \cite{GM-c}, 
every (c-)ordered compact $G$-space $K$ is representable on a \textit{Rosenthal Banach space} (not containing an isomorphic copy of $l^1$). It follows that every c-orderly topological group $G$ (e.g., every $H_+(K)$) is Rosenthal representable. That is, $G$ is embedded as a topological subgroup into the linear isometry group $\Iso_l(V)$ (with the strong operator topology) for some Rosenthal Banach space $V$. However, it is not always true if $V$ is an Asplund space. 
In contrast, the actions of topological groups $H_+[0,1]$ and $H_+(S^1)$ on $[0,1]$ and $S^1$, respectively, 
are not Asplund representable (Theorem \ref{t:notAspOrd}).   

 Recall that a Banach space $V$ is said to be \textit{Asplund} if the dual $W^*$ of every separable Banach subspace $W \subset V$  is separable. Every Asplund space is Rosenthal.

\begin{defin} \label{d:AspOrd} 
	Let us say that a topological group $G$ is \textit{Asplund orderly} if 
	$G$ topologically can be embedded into $H_+(K)$, where $K$ is a linearly ordered compact space such that the $G$-space $K$ is Asplund representable. Similarly can be defined \textit{Asplund c-orderly} groups. 
\end{defin}

\begin{question} \label{q:AspOrd}
	Which (c-)orderly topological groups are %: (c-)orderly ?  dynamically (c-)orderable ?
	Asplund (c-)orderly? 
\end{question}

\begin{remark} \label{t:notAspOrd} \ 
	\begin{enumerate}  
		
		\item If, in Definition \ref{d:AspOrd}, in addition, $K$ is metrizable, then $K$, as a $G$-space, is Asplund representable iff the enveloping semigroup $E(K)$ is metrizable (this follows from \cite{GMU08}).  This gives an important link between Questions \ref{q:SmallEnv2} and \ref{q:AspOrd}. 
			\item 	The orderly topological group $H_+[0,1]$ and the c-orderly topological group $H_+(S^1)$ are not Asplund orderly and Asplund c-orderly, respectively. Indeed, according to 	\cite{GM-NewAlg08, GM-tLN}, any representation of the groups $H_+[0,1]$ and $H_+(S^1)$ on an Asplund Banach space is trivial.
				\item However by \cite{GM-c}, 
				the actions of the Polish groups $H_+[0,1]$ and $H_+(S^1)$ on $[0,1]$ and $S^1$, respectively, admit a proper representation on a Rosenthal %(but not Asplund) 
				Banach space. 
				% (it follows that $H_+[0,1]$ and $H_+(S^1)$ are Rosenthal representable). 
	\end{enumerate} 
\end{remark}

%%%%% The circle compact topological group $\T$ is Hilbert-c-orderly. 
%%%%%% The projective group $PGL(2,\R)$ is c-orderly but not not Asplund-orderly.  

%\begin{thm} 
%	The orderly topological group $H_+[0,1]$ and the c-orderly topological group $H_+(S^1)$ are not Asplund orderly and Asplund c-orderly, respectively. 
%\end{thm}
%\begin{proof} This follows from combining the following two results.  
%	\begin{enumerate}
%		\item \cite{GM-c} 
%		The actions of the Polish topological groups $H_+[0,1]$ and $H_+(S^1)$ on $[0,1]$ and $S^1$, respectively, admit a \textbf{proper} representation on a Rosenthal %(but not Asplund) 
%		Banach space (it follows that $H_+[0,1]$ and $H_+(S^1)$ are Rosenthal representable). 
%		\item 	
%			\cite{GM-NewAlg08, GM-tLN} However, any representation of the groups $H_+[0,1]$ and $H_+(S^1)$ on an Asplund Banach space is trivial.
%%			
%%			 Meaning that, for these two groups every continuous homomorphism %$h \colon G \to \Iso_l(V)$ 
%%		into the linear isometry group $\Iso_l(V)$, for any Asplund Banach space $V$, is trivial.  
%	\end{enumerate}
%	\end{proof} 

%\begin{question}
%	Which orderly topological groups $G$ (in particular, which discrete left ordered groups) admit an effective ordered %(not necessarily, metrizable) 
%	 dynamical $G$-system $K$ which is Asplund representable? 
%	%What about the free group $F_2$? 
%\end{question}

%It is well known that the free group $F_2$ is orderable.  
%
%\begin{question}
%Is it true that $F_2$ is Asplund-orderly ?  
%\end{question}

\begin{remark} \label{r:RnAspOrd} 
	The standard two-point compactification of $\R$ is an ordered rts-compactification which is an Asplund representable $\R$-system  (see \cite{GM1}) but not reflexive representable. Hence, $\R$ is Asplund orderly. Proposition \ref{p:ProdAsp-Ord} implies, in particular, that $\R^n$ is Asplund-orderly. 
\end{remark}

\begin{prop} \label{p:ProdAsp-Ord}  
	Finite product of Asplund-orderly topological groups is Asplund-orderly. 
\end{prop}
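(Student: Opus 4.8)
The plan is to reuse the construction from the proof of Proposition~\ref{p:prod} and to endow the resulting ordered $G$-space with an Asplund representation. By an obvious induction it suffices to treat two factors, so let $G_1,G_2$ be Asplund-orderly and set $G:=G_1\times G_2$. For each $i$ fix a compact linearly ordered space $(K_i,\le_i)$ together with a topological embedding $G_i\hookrightarrow H_+(K_i)$ such that the $G_i$-space $K_i$ is Asplund representable. Exactly as in Proposition~\ref{p:prod}, form the lexicographic ordered sum $K:=K_1\cup K_2$ (with $x_1<x_2$ for all $x_i\in K_i$); then $K$ is a compact linearly ordered space, the pieces $K_1,K_2$ are clopen in $K$, and the block action $(g_1,g_2)\cdot x=g_ix$ (for $x\in K_i$) embeds $G$ topologically into $H_+(K)$. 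In view of Definition~\ref{d:AspOrd}, it remains only to produce an Asplund representation of the $G$-space $K$.

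For this, choose for each $i$ (in the sense of \cite{GM-survey}) an Asplund Banach space $V_i$ with an isometric linear action $h_i\colon G_i\to\Iso(V_i)$, together with a weak$^*$-continuous $G_i$-embedding $\alpha_i\colon K_i\to V_i^*$ intertwining the action on $K_i$ with the dual action on $V_i^*$. Put
\[
V:=V_1\oplus V_2\oplus\R
\]
with, say, the $\ell^2$-sum norm. Then $V$ is again Asplund, since a finite direct sum of Asplund spaces is Asplund, and $G$ acts on $V$ isometrically by $h(g_1,g_2):=h_1(g_1)\oplus h_2(g_2)\oplus\mathrm{id}_{\R}$; this action is strong-operator continuous because each $g_i\mapsto h_i(g_i)v_i$ is continuous. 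On $V^*=V_1^*\oplus V_2^*\oplus\R$ define $\alpha\colon K\to V^*$ piecewise by
\[
\alpha(x):=(\alpha_1(x),0,0)\ \ (x\in K_1),\qquad \alpha(x):=(0,\alpha_2(x),1)\ \ (x\in K_2).
\]

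It then remains to check that $(h,\alpha)$ is a representation of the $G$-space $K$. Equivariance on each piece $K_i$ holds precisely because the two coordinates on which $G$ does not act through $G_i$ are \emph{fixed} vectors of the dual action: the vanishing $V_j^*$-coordinate ($j\ne i$) and the constant last coordinate are $G$-fixed, while the remaining coordinate transforms by $\alpha_i$. The map $\alpha$ is weak$^*$-continuous, being so on each of the clopen sets $K_1,K_2$; it is injective on each piece (as $\alpha_i$ is) and separates the two pieces because their last coordinates, $0$ and $1$, differ. Hence $\alpha$ is a continuous injection of the compact space $K$ into the Hausdorff space $(V^*,w^*)$, so a topological embedding, and it is $G$-equivariant. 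Thus $K$ is Asplund representable as a $G$-space, and $G$ is Asplund-orderly.

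The only genuinely delicate point is this separation of $K_1$ from $K_2$ inside the dual. Since $G_i\subset H_+(K_i)$ fixes the extreme points $\min K_i,\max K_i$, these are $G_i$-fixed points and may well be carried to $0\in V_i^*$; the naive map into $V_1^*\oplus V_2^*$ could then identify a point of $K_1$ with a point of $K_2$. Adjoining the trivially-acted summand $\R$ and using the distinct constant coordinates $0$ and $1$ fixes this without destroying equivariance, the key constraint being that any constant used as a coordinate must itself be a fixed vector of the dual action.
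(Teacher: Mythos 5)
Your proof is correct, and it follows the paper's overall architecture (reduce to two factors, form the ordered disjoint sum $K=K_1\cup K_2$ with the block action as in Proposition~\ref{p:prod}, then verify Asplund representability of the $G$-space $K$), but the verification step is carried out by a genuinely different route. The paper invokes the criterion from \cite{evFr} that Asplund representability is equivalent to the existence of a $G$-invariant, bounded, point-separating fragmented family of functions, and extends such families $F_i$ from $K_i$ to $K$ by zero. You instead build the Banach representation directly: you take Asplund representations $(h_i,\alpha_i)$ on $V_i$, pass to $V=V_1\oplus V_2\oplus\R$ (Asplund, as a finite sum of Asplund spaces), act diagonally with the trivial action on the $\R$ summand, and glue the $\alpha_i$ with distinct constant last coordinates. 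Your construction is more explicit and, notably, it addresses head-on the one point the paper waves away with ``we omit other details'': the naive sum $V_1^*\oplus V_2^*$ need not separate $K_1$ from $K_2$ (both $\alpha_1$ and $\alpha_2$ may send points to $0$), and your auxiliary $G$-fixed coordinate taking values $0$ on $K_1$ and $1$ on $K_2$ is exactly the fix; in the paper's language it corresponds to adjoining the ($G$-invariant, continuous, hence fragmented) indicator function of the clopen set $K_2$ to the family $F$. What the paper's approach buys is brevity and avoidance of any discussion of dual actions and weak$^*$ continuity; what yours buys is a self-contained, checkable representation and the explicit repair of the separation issue.
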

\begin{proof}
	This can be done similarly to the proof of Proposition \ref{p:prod} taking into account that if a compact $G_i$-space $K_i$ ($i \in \{1,2\}$) is Asplund representable,  then 
	%using some nontrivial facts about dynamical systems theory, 
the $G$-space $K$ is also Asplund representable, where $G:=G_1 \times G_2$ and $K$ is the disjoint sum $K_1 \cup K_2$. In order to check this, we use a criterion (see \cite[Theorem 3.11.2]{evFr} or \cite{Me-b}) of Asplund representability. This criterion asserts that a compact $G$-space $X$ is Asplund representable iff there exists a $G$-invariant point-separating bounded family $F$ of real functions on $X$ which is a \textit{fragmented family} (in the sense of \cite{GM1}) on $X$. 
	Using this result let $F_i$ be a such a family for the $G_i$-space $K_i$.  
	For every $f\colon K_1 \to \R$ from $F_1$ define $\overline{f_1} \colon K \to \R$, where $K:=K_1 \cup K_2$ with $\overline{f_1}(y)=0$ for every $y \in K_2$. 
	Set $\overline{F_1}:=\{\overline{f} \colon f \in F_1\}$. Similarly can be defined $\overline{F_2}$. Finally, observe that the family $F:=\overline{F_1}\cup \overline{F_2}$ is a $G$-invariant, bounded, fragmented family of functions on $K$ which separates the points of $K$.  
	For simplicity we omit other details.
	\end{proof} 

\begin{question} \label{q:CountAspOrd} 
	Is it true that every countable discrete (c-)ordered group is
	 Asplund (c-) orderly?  
\end{question}

As we know (Theorem \ref{t:bi-Case}.2) for every discrete countable orderly group $G$ there exists 
an order preserving action of $G$ on a compact metrizable linearly ordered space $K$ which induces a topological embedding $(G,\tau_{discr}) \hookrightarrow H_+(K)$. In order, to ensure that such $G$-space $K$ is Asplund representable, it is enough to show (by \cite{GMU08}) that the enveloping semigroup $E(K)$ is metrizable 
(cf. Questions \ref{q:SmallEnv2}(a) and \ref{q:MetrizSemComp}).

\sk \sk 
%%%%%%%%%%%%%%%%%%%%%%%%%%%%%%%%%%%%%%%%%%%%%%%%%%%%%%%%%%%%%%%%%
%%%%%%             Bibliography                            %%%%%%
%%%%%%%%%%%%%%%%%%%%%%%%%%%%%%%%%%%%%%%%%%%%%%%%%%%%%%%%%%%%%%%%%

%\vskip 1cm

\bibliographystyle{amsplain}

\end{document}